\def\esssup_#1{\underset{#1}{\mathrm{ess\,sup\, }}}
\def\essinf_#1{\underset{#1}{\mathrm{ess\,inf\, }}}
\def\argmax_#1{\underset{#1}{\mathrm{arg\,max\, }}}
\def\argmin_#1{\underset{#1}{\mathrm{arg\,min\, }}}
\newtheorem{theorem}{Theorem}[section]
\newtheorem{definition}{Definition}[section]
\newtheorem{proposition}[theorem]{Proposition}
\newtheorem{remark}[theorem]{Remark}
\newtheorem{lemma}[theorem]{Lemma}
\newtheorem{corollary}[theorem]{Corollary}
\newtheorem{assumption}[theorem]{Assumption}
\newtheorem{problem}[theorem]{Problem}
\definecolor{Red}{rgb}{1.00, 0.00, 0.00}
\title{Convergence analysis of controlled particle systems arising in deep learning: from finite to infinite sample size}
\author{
Huafu Liao \thanks{Email: {hfliao@dlut.edu.cn}, School of Mathematical Sciences, Dalian University of Technology.}\and
Alp\'{a}r R. M\'{e}sz\'{a}ros \thanks{Email: alpar.r.meszaros@durham.ac.uk, Department of Mathematical Sciences, University of Durham, Durham, UK.} \and
Chenchen Mou \thanks{Email: chencmou@cityu.edu.hk, Department of Mathematics, City University of Hong Kong.} \and
Chao Zhou\thanks{Email: matzc.nus.edu.sg@gmail.com, Department of Mathematics and Risk Management Institute, National University of Singapore.}
}
\date{\vspace{-5ex}}
\numberwithin{equation}{section}
\begin{document}
\maketitle
\begin{abstract}
This paper deals with a class of neural SDEs and studies the limiting behavior of the associated sampled optimal control problems as the sample size grows to infinity. The neural SDEs with $N$ samples can be linked to the $N$-particle systems with centralized control. We analyze the Hamilton--Jacobi--Bellman equation corresponding to the $N$-particle system and establish regularity results which are uniform in $N$. The uniform regularity estimates are obtained by the stochastic maximum principle and the analysis of a backward stochastic Riccati equation. Using these uniform regularity results, we show the convergence of the minima of the objective functionals and optimal parameters of the neural SDEs as the sample size $N$ tends to infinity. The limiting objects can be identified with suitable functions defined on the Wasserstein space of Borel probability measures. Furthermore, quantitative convergence rates are also obtained.

\vspace{0.3 cm}

\noindent{\textbf{2020 AMS Mathematics subject classification}: 49N80; 65C35; 49L12; 62M45.}

\vspace{0.3 cm}

\noindent{\textbf{Keywords}:}\quad{Mean field optimization; Neural networks; Mean field control; Propagation of chaos}.
\end{abstract}

\section{Introduction}
In recent years, neural networks have been shown very effective for modeling complicated data sets. For situations where large amounts of samples are observed, it is important to ensure the convergence of training outcomes as the number of samples goes to infinity, i.e., the generality of the neural network. Such problems are studied in \cite{Chizat,E19}, and \cite{Mei2018}. Motivated by these studies, in this work we investigate a class of {\it neural SDEs} (see \cite{Liu2020}), which generalize {\it neural ODEs} (see e.g. \cite{E17,Haber17,Chen2018}). As the volume of samples goes to infinity, we establish quantitative results on the convergence of both the minima of the objective functionals and the optimal parameters.

Our research is concerned with the following neural SDEs
\begin{align}\label{sample-dyn-0}
\left\{\begin{aligned}
    &dX^{\theta,i}_N(t) = f\bigg(t,\theta(t),X^{\theta,i}_N(t),\frac{1}{N}\sum_{j=1}^N\delta_{X^{\theta,j}_N(t)}\bigg)dt + \sigma dW^0(t),\\
&X^{\theta,i}_N(0)=x_i,\ i=1,\ldots,N,
\end{aligned}\right.
\end{align}
where $\theta:[0,T]\to\Theta$ is a stochastic process that represents the trainable parameters (valued in a given control set $\Theta$). Here $f:[0,T]\times\Theta\times\mathbb R\times\mathcal P_{2}(\mathbb R)\to\mathbb R$ is a nonlinear function that governs the feed forward dynamics. $T>0$ is a given time horizon, $(W^{0}(t))_{t\in[0,T]}$ is a given Brownian motion on $\mathbb R$ with intensity $\sigma\in\mathbb R$, and the neural SDEs are initiated with samples $x_i\in \mathbb R,\ i=1,\ldots,N$. The standing assumptions on the data and the set up for the specific sampled optimal control problems, including the description of the objective function is given in Section \ref{The model and the problems} (see \eqref{obj-func-2} and \eqref{def-V-N-O}).

The neural SDE \eqref{sample-dyn-0} describes deep learning from a dynamical systems viewpoint. The dynamical system approach to deep learning was proposed in \cite{Haber17,E17}, and studied later in \cite{E19,Jabir,Larsson,ThGe2018,Bo22,Cohen}, etc. See also, for instance, \cite{Chen2018,Ruthotto2020,Baudelet2023,Lauriere2021}, for the application of such an approach. Following \cite{E19,E17,Haber17,QLi2017}, we explain the intuition behind this approach to deep learning as follows. For models such as residual networks, recurrent neural networks and normalizing flows, etc., given $N$ inputs $x=(x_1,\ldots,x_N)$, the typical feed-forward propagation with $K$ layers can be presented as
\begin{align*}
x_i(t+1)=x_i(t)+\epsilon f\big(x_i(t),\theta(t)\big),\quad x_i(0)=x_i,\quad t=0,1,\ldots,K-1,\quad i=1,2,\ldots,N.
\end{align*}
Here $x(0)$, $x(K)\in\mathbb R^{dN}$ represent the input layer and output layer, respectively, and $\theta(t)$ is the control (training) parameters. The goal of such learning process is to tune the trainable parameters $\theta(t)$, $t=0,1,\ldots,K-1$, so that the terminal state $x(K)$ minimizes/maximizes a given objective function. For example, in supervised learning, the objective function usually requires that $g\big(x_i(K)\big)$ most { resemble} certain label $y_i$ for $i=1,\ldots,N$, see \cite{QLi2017}; in the context of neural networks for optimal control, where $f\big(x,\theta(t)\big)$ is understood as a feedback function with $\theta(t)$ to be determined, the aim is to maximize the expectation of the utility $\frac1N\sum_{i=1}^NU\big(x_i(K)\big)$, see \cite{Hure21,Hure22}. As the layer number $K$ tends to infinity, after an appropriate rescaling $\epsilon K=T$, it is straightforward to present the above iteration via an ODE
\begin{align*}
\dot{x}_i(t)=f\big(x_i(t),\theta(t)\big),\quad x_i(0)=x_i,\quad t\in[0,T],\ i=1,2,\ldots,N,
\end{align*}
where the terminal state is $x(T)$. Here in a more general setting, we incorporate the feed-forward propagation with a systemic noise as well as the empirical distribution of inputs. The resulting continuous idealization is then naturally generalized to the neural SDE \eqref{sample-dyn-0}. A typical situation where the empirical distribution enters the feed-forward propagation is batch normalization (see \cite{IoffeSzegedy2015}). For example, we may take the dynamics in \eqref{sample-dyn-0} to be
\begin{align*}
  f(t,\theta,x,\mu)=\tilde f\left(\theta,\frac{x-\int y\mu(dy)}{\sqrt{\int y^2\mu(dy)+\epsilon}}\right)
\end{align*}
for some function $\tilde f$, where $\epsilon>0$ is a given parameter.
We note here that the controlled particle system \eqref{sample-dyn-0} is different from the usual mean field type, typically studied in the literature, in the sense that every particle $X^{\theta,i}_N(t)$ in the system shares the {\it same control} $\theta(t)$ rather than having their own $\theta^i(t)$.

In summary, our results make it possible to quantitatively analyze the convergence of training outcomes obtained from neural SDEs with $N$ samples when $N\to+\infty$. Speaking with the terms of optimal control theory, our problem relates to the convergence of the value functions and optimal controls of the particle systems  with centralized controls, i.e., the propagation of chaos or the law of large numbers. As the number of particles grows to infinity, we explore sufficient conditions that ensure the aforementioned convergence. Such convergences are possible thanks to the presence of an $L^2$-regularizer in the objective functional. Furthermore, quantitative results on the convergence rate are also obtained. Above all, our exploration can be interpreted as conditions ensuring the generalization of models in deep learning. Specifically, an adaptation of our results to the analysis on supervised learning is carried out in Section \ref{adaptation}.

Propagation of chaos on controlled particle systems {have been extensively studied} recently. To name a few, we refer to \cite{Lacker17, Cardaliaguet19, Cardaliaguet2022, Carmona2022,Daudin2023,Djete22,Gangbo21,Gangbo22,Mayorga23,Tangpi2022,Tangpi2023,MengzhenLi2023}, see also \cite{Coghi16,Chassagneux22,Germain2022,Ren2022,Ren2023}, as well as the references therein for the ones with uncontrolled particle systems. The limit of the value functions in the aforementioned convergence is a function whose state variable is a probability measure. For literature on such limits, see for instance \cite{Wei2017,Wu2020,Gangbo21,Gangbo22,Mayorga23}.

As mentioned before, our model \eqref{sample-dyn-0} is significantly different from the ones above in terms of the form of control, which thus results in a very different Hamilton--Jacobi--Bellman (HJB) equation. Although similar models are studied in \cite{Kou2016,E2016,E20171,H-L2017,Hure21,Hure22}, their emphasis is on the analysis of the corresponding algorithm. The models and results in \cite{Bo22,Yu2022,E19} are the closest ones to the present paper, where the convergence of both value functions and optimal controls are investigated. In \cite{Bo22,Yu2022} the law of large numbers is obtained where there is no quantitative results. In \cite{E19} on the other hand, the authors focus on the deterministic control and obtain quantitative results on large deviations, but the state dynamics $f$ therein is required to be independent of the distribution of particles. Here, we study models with more general state dynamics $f$ that could depend on the distribution of particles and obtain quantitative results. More specifically, besides the law of large numbers, we further show the corresponding convergence rate: as the sample size $N$ grows to infinity, the minima of the objective functional, i.e. $V_N$ and the optimal feedback function $\theta^*_N$ converge, at certain rates, to a value function and a feedback function whose state variable is the empirical measure of the samples. As a result, we show that the optimal parameters also converge at certain
rates. We obtain two kinds of convergence results: the short time convergence and the global convergence, both accompanied by a precise convergence rates.

The HJB equation written for the value function $V_{N}$ associated to our main control problem, i.e. Problem \ref{problem}, formally reads as 
\begin{align}\label{intro:1}
\left\{\begin{aligned}
    &\partial_t V_{N}+\frac{\sigma^2}2\sum_{i,j=1}^N\partial^2_{x_ix_j}V_N+\inf_{\theta\in\Theta}\bigg\{\frac\lambda2\big|\theta\big|^2+\sum_{i=1}^Nf\bigg(t,\theta,x_i,\frac1N\sum_{j=1}^N\delta_{x_j}\bigg)\partial_{x_i}V_N+\frac1 N\sum_{i=1}^NL(x_i)\bigg\}=0,\\
&\quad(t,x)\in[0,T)\times\mathbb R^N,\\
&V_N(T,x_1,\ldots,x_N)=\frac1N\sum_{i=1}^NU(x_i),\quad x\in\mathbb R^N,
\end{aligned}\right.
\end{align}
where $L$ and $U$ are the suitably chosen loss function and final cost function, respectively, and the control functions are valued in some control set $\Theta$ and $\sigma\in\mathbb R$ and $\lambda>0$ are given further parameters.

To study the convergence of the value functions and optimal feedback functions of Problem \ref{problem}, the main ingredients are the regularity results on the HJB equations which are uniform and decay suitably with respect to $N$ -- the dimension of the input variables (Theorem \ref{veri} and Theorem \ref{veri-1}). This idea is in the spirit of \cite{Gangbo22} where the control is of the mean field type. However, in contrast to this, in our problem we are faced with the same common control for each particle and the dynamics of each particle could be nonlinear with respect to the control variable. Hence the resulting Hamiltonian is significantly different in structure. Such Hamiltonian requires additional analysis where the a priori estimates on $\nabla_x V_N$ and the regularizer play important roles. Moreover, in our problem there is a common Brownian motion in the dynamics of each particle. Therefore the method in \cite{Gangbo22} is no longer applicable directly in our situation. Instead, here we rely more on a probabilistic approach to analyze the HJB equation and to obtain the desired regularity results. 

The {\it first main contribution} of this paper is the uniform (in $N$) estimates on the degenerate PDE systems describing $V_N$, as well as $\nabla_x V_N$, $\nabla^2_{xx}V_N$. Such uniform estimates will imply the convergence rate of $V_N(t,x)$ and the corresponding feedback functions $\theta^*_N(t,x)$.  In order to obtain the desired uniform estimates, we apply the nonlinear Feymann--Kac representation and focus on the stochastic processes corresponding to $V_N$, $\nabla_x V_N$ and $\nabla^2_{xx}V_N$, respectively. Because of the degenerate nature of the problem, we need to introduce regularizations at several levels: these will be via involving non-degenerate idiosyncratic noise as well as some suitable cut-off procedures to handle the growth properties of the data. Our estimates will turn out to be independent of these regularization parameters. It is well known that $\nabla_x V_N$  corresponds to the adjoint process in the stochastic maximum principle. As a result of this, we apply the stochastic maximum principle and obtain that each entry of $\nabla_x V_N$ decays at the rate of $O(N^{-1})$. However, the analysis of the systems involving $\nabla^2_{xx}V_N$ is more subtle. It turns out that the suitable approximations of $\nabla^2_{xx}V_N$ introduced above, are related to matrix-valued processes ($Y_t$ in \eqref{riccati-0-0} and \eqref{riccati}) that satisfy backward stochastic Riccati equations. We first conduct an analysis on the Hamitonian based on the a priori estimates on $\nabla_xV_N$. Then local in time estimates on the processes $Y_t$ are obtained via the contraction mapping principle: the $(i,j)-$entry of $Y_t$ has a decay rate of $O(\delta_{ij}N^{-1}+N^{-2})$. As for the global estimates, we make further suitable convexity assumptions on the data and analyze the eigenvalues of $Y_t$ utilizing the Riccati (i.e. quadratic) feature of the corresponding BSDE. Under these extra assumptions, each eigenvalue of $Y_t$ decays at the rate of $O(N^{-1})$ for arbitrary long time horizon $T$. These convexity assumptions are similar in spirit to displacement convexity (used in \cite{BENSOUSSAN2019,Carmona2015,Gangbo22}), however, they are not covered by the existing literature (not even by the displacement monotonicity conditions introduced in \cite{Gangbo2022-1, BanMesMou}), because the state dynamics given by $f$ is allowed to have a measure dependence. We note here that such measure dependence of $f$ has been investigated in \cite{BENSOUSSAN2023, Djete22,Lacker17,Mou2022MFGC} within the framework of standard mean field games and control.

Our {\it second main contribution} is the convergence analysis on $V_N(t,x)$ and $\theta_N(t,x)$ on a quantitative level. We use a variational approach to show that $V_N$ and $\theta^*_N$ are both finite dimensional projection of certain functions $\mathcal V$ and $\theta^*$ whose state variables are probability measures. Furthermore, thanks to the previous uniform estimates, we show that, both $\mathcal V$ and $\theta^*$ are Lipschitz continuous with respect to their state variables. Under  our two sets of different assumptions, the previous results hold for a short time horizon or global in time, respectively. Such convergence of $V_N(t,x)$ and $\theta_N(t,x)$ has two major implications on neural SDEs. First, the convergence $V_N(t,x)$ translates to the convergence of minima of objective functionals. Second, the convergence of $\theta_N(t,x)$ would yield pathwise convergence results that translate to the convergence of optimal parameters obtained via neural SDEs (see Proposition \ref{cauchy-sequence-0-0} and Proposition \ref{cauchy-sequence-1-0}).
\medskip

{\bf Some concluding remarks.} The limit function $\mathcal V$ is formally associated to a second order HJB equation set on the Wasserstein space $\mathcal P_{2}(\mathbb R)$. This formally reads as 
\begin{align}\label{intro:2}
\left\{\begin{aligned}
    &\partial_t\mathcal V(t,\mu)+\frac{\sigma^2}2\left\{\int_{\mathbb R}\partial_{y\mu}\mathcal V(t,\mu)(y)\mu(dy)+\int_{\mathbb R^{2}}\partial_{\mu\mu}\mathcal V(t,\mu)(y,y')\mu(dy)\mu(dy')\right\}\\
    &\qquad\  +\inf_{\theta\in\Theta}\left\{\frac\lambda2\big|\theta\big|^2+\int_{\mathbb R}f\left(t,\theta,y,\mu\right)\partial_{\mu}\mathcal V(t,\mu)(y)\mu(dy)+\int_{\mathbb R}L(y)\mu(dy)\right\}=0,\ \ (t,\mu)\in[0,T)\times\mathcal P_{2}(\mathbb R),\\
&\mathcal V(T,\mu)=\int_{\mathbb R}U(y)\mu(dy),\quad \mu\in\mathcal P_{2}(\mathbb R).
\end{aligned}\right.
\end{align}
We would like to underline at this stage that studying the quantitative decay estimates with respect to $N$ of second order spacial derivatives of $V_{N}$ (that we perform in this paper) results in the fact that $\partial_{\mu}\mathcal V$ exists and it is Lipschitz continuous in a suitable sense. The very same analysis that we perform on these objects could be pushed further, to study quantitative third order derivative estimates for $V_{N}$, which would result in twice differentiability of $\mathcal V$, and hence in the fact that $\mathcal V$ is a classical solution to the HJB equation \eqref{intro:2}. This would be very much in the flavor of the $C^{2,1,w}(\mathcal P_{2}(\mathbb R))$ type estimates from \cite{Gangbo22}. { When $\sigma=0$, as a consequence of our results, we actually establish the well-posedness of \eqref{intro:2} with solutions admitting derivatives that induce Lipschitz continuous maps in the Wasserstein space. We refer to the discussion after Corollary \ref{extend-derivative} for more details. Roughly speaking, the solution in this case is regular enough to let every term in \eqref{intro:2} to be meaningful in the classical sense, complementing the results on viscosity solutions in \cite{E19}.}

\medskip

The specific choice for $L, U$ and $f$ in the above setting is motivated by the concrete applications in deep neural networks we have described above. In our analysis, in  fact one would be able to allow more general measure dependent functions in \eqref{intro:2}.

\medskip

We would like to emphasize once more that connections between equations of type \eqref{intro:1} and \eqref{intro:2}, and the corresponding quantitative rates of convergence as $N\to+\infty$ have received a great attention in the {  past couple of years. For a non-exhaustive list of works we refer to \cite{Cardaliaguet2022, Daudin2023, Car2023, Gangbo21, Mayorga23, Swiech24}. The results of these papers differ significantly from ours and their motivation is quite different. Let us spend some time describing the main differences and similarities between our approach and the ones previously considered in the literature. The common feature of the previous works is to start with control problems and HJB equations in the infinite dimensional setting of probability measures, and then project the viscosity solutions onto finite dimensional subspaces (typically the subspace of empirical measures), then study the discrepancy between such projections and the value functions associated to the finite dimensional control/particle systems. The corresponding convergence analysis then can be divided into two distinct categories, essentially based on the notion of viscosity solutions used in the infinite dimensional setting. 

First, in \cite{Cardaliaguet2022, Daudin2023, Car2023} --- and in related subsequent works ---   the authors obtain quantitative convergence rates based on Lipschitz and semi-concavity estimates on the infinite dimensional value functions. The rates are proved to be actually sharper in regions where the infinite dimensional value functions are more regular. Classical solutions to the infinite dimensional PDEs, with suitable derivative bounds are known to give rise to sharp convergence rates (cf. \cite{Cardaliaguet19}). For these methods it was indispensable though the presence of non-degenerate idiosyncratic noise and parabolic regularity.  

Second, the line of works \cite{Gangbo21, Mayorga23, Swiech24} considered the presence of purely common noise or deterministic problems, and they have relied on the `Hilbertian lifting' of the infinite dimensional HJB equations. This approach, however, gave only qualitative convergence results in general.

In our models, we consider purely common noise coming from centralized control problems (meaning that all particles share the same control process). Centralized controls do not enter the framework of the previously mentioned works. While these models are relevant in learning applications, to the best our knowledge only \cite{E19} has considered such kind of convergence questions in the case of deterministic problems. Furthermore, the above mentioned papers relied on viscosity solution techniques and regularization procedures for semi-concave and Lipschitz continuous functions defined on the Wasserstein space. Our approach is completely different from such methods, and our analysis is based on finite dimensional quantitative analysis and a careful combination of parabolic PDE techniques and stochastic analysis of FBSDE systems. As such, it relies on obtaining uniform in $N$ derivative decay estimates, and obtains the limiting functions via a compactness arguments. Such a method nowadays is referred to as a {\it bottom-up approach}.

After the original posting of our manuscript, methods of bottom-up nature became instrumental in tackling other important problems of similar nature:  we refer to \cite{CirRed, CirJacRed} for the quantitative convergence of Nash equilibria in mean field games and non-exchangeable models and to the works \cite{Liao24, JacMes:25, JacMes:26, LamMes} for building solutions to master equations and quantitative convergence problems in the case of so-called mean field games of controls. As this has been done in \cite{JacMes:26} for a different model, for the last building block in establishing that the limit value function $\mathcal{V}$ is a classical solution to \eqref{intro:2} in the case when $\sigma\neq 0$, we would need the establishment of uniform decay estimates on third order derivatives of $V_N$. While this should be possible, we believe that such a tedious analysis would significantly deviate attention from the original motivation of our paper, which is the convergence of $(V_N)_{N\in\mathbb N}$ and $(\theta_N)_{N\in\mathbb N}$. Therefore, we have elected to not include these further arguments in the current manuscript. It worth mentioning again that in the case when $\sigma = 0$, $\mathcal V$ is indeed a classical solution to \eqref{intro:2}.

In our paper, in addition to the value functions, the convergence rate of control (training) parameters $\theta_N$ constitutes a large part of our analysis, whereas most of the above mentioned works focus on the convergence rate of value functions $V_N$ only.
}

\medskip

\medskip

The remainder of the paper is organized as follows. In Section \ref{The model and the problems} we describe the model, the main problem of interest and our main results. In Section \ref{Auxiliary problems and corresponding uniform estimates} we first introduce the auxiliary problems and study the regularity of the corresponding value functions. Then we establish the estimate on the derivatives of the value function as well as the verification results associated to the original problem. In Section \ref{Discussion on convergence rate} we show that the value function $V_N$ in Problem \ref{problem} is the finite dimensional projection of a function $\mathcal V$ whose state variable is in the space of probability measures, and establish the results on the convergence rate. { In Section \ref{adaptation} we adapt our results to the analysis on the training with monotone loss functions.}
\section{The model problem, standing assumptions and main results}\label{The model and the problems}

Let $T>0$ be a given time horizon. Let $(\Omega,\mathbb P,\mathcal F,\mathbb F)$ be an augmented filtered probability space satisfying the usual conditions, where $\mathbb F=(\mathcal F_t)_{t\in[0,T]}$ is the natural filtration generated by a sequence of independent Brownian motions $\{W^i\}_{i=0}^\infty$.

The {$[0,T]\ni t\mapsto X^{\theta,i}_N(t),\ i=1,\ldots,N,$} in \eqref{sample-dyn-0} is a sequence of controlled diffusion processes coupled with the common noise $W^0(t)$ and the mean field term $\frac{1}{N}\sum_{j=1}^N\delta_{X^{\theta,j}_N(t)}$. The control {$[0,T]\ni t\mapsto\theta(t)$} in \eqref{sample-dyn-0}, which is understood as the weight process in deep learning, is shared among the dynamics of all $X^{\theta,i}_N(t)$.

Next we formulate the dynamic version of the optimization problem. For $x_1,x_2,\ldots,$ consider the admissible set $\mathcal U^{ad}_t$ consisting of the tuple $(\Omega,\mathbb P,\mathcal F,\{W^i\}_{i=0}^{+\infty},\theta)$ satisfying the following:
\begin{itemize}
\item $(\Omega,\mathbb P,\mathcal F)$ is a complete probability space.
\item $\{W^i\}_{i=0}^N$ are independent Brownian motions defined on $(\Omega,\mathbb P,\mathcal F)$ with $W^i(s)=0$ almost surely and $\mathcal F_t:=\sigma\big(W^i(u),u\in[s,t],i\geq0\big)$ augmented by all the $\mathbb P$-null sets in $\mathcal F$.
  \item $\theta$ is an $\{\mathcal F_t\}_{s\leq t\leq T}$-adapted process on $(\Omega,\mathbb P,\mathcal F)$ and $\theta(s)\in\Theta$, $s\in[t,T]$.
  \item For each $N\geq1$, $\big(\{X^{\theta,i}_N\}_{i=1}^N,W^0,\theta\big)$ solves \eqref{sample-dyn} on $(\Omega,\mathbb P,\mathcal F,\{\mathcal F_t\}_{s\leq t\leq T})$ where the pathwise uniqueness holds:
  \begin{align}\label{sample-dyn}
\left\{\begin{aligned}
    &dX^{\theta,i}_N(t) = f\bigg(t,\theta(t),X^{\theta,i}_N(t),\frac{1}{N}\sum_{j=1}^N\delta_{X^{\theta,j}_N(t)}\bigg)dt + \sigma dW^0(t),\\
&X^{\theta,i}_N(s)=x_i,\ i=1,\ldots,N.
\end{aligned}\right.
\end{align}
\end{itemize}
When there is no ambiguity, we use $\theta$ to denote the admissible control. We remark that the role of $\{W^i\}_{i=1}^{+\infty}$ will become evident when considering the approximated problem \eqref{sample-dyn-1}.

Given $s\in[0,T]$, a control $\theta\in\mathcal U^{ad}_s$ and $N$ inputs $(x_1,\ldots,x_N)\in\mathbb R^N$, we can further define the objective function {$J_N(s,\cdot):\mathcal U^{ad}_s\times\mathbb R^N\to\mathbb R$} as follows
\begin{align}\label{obj-func-2}
J_N(s,\theta,x_1,\ldots,x_N):=\mathbb E\bigg[\frac1 N\sum_{i=1}^N\int_s^TL\big(X^{\theta,i}_N(t)\big)dt+\frac1N\sum_{i=1}^NU\big(X^{\theta,i}_N(T)\big)+\frac\lambda2\int_s^T\left|\theta(t)\right|^2dt\bigg],
\end{align}
The third term on the right hand side of \eqref{obj-func-2} is the regularizer.  In applications, we may also choose the regularizer to be $\frac\lambda2\int_s^T\left|\theta(t)-\eta_t\right|^2dt$ where $(\eta_t)_{t\in[0,T]}$ is a deterministic reference process. 
It is straightforward but notationally cumbersome to generalize our results to the case where $\Theta=\mathbb R^d$ and $x_i\in\mathbb R^m$. For the ease of notations and convenience in this paper we choose $d=m=1$.

In our analysis we consider the space of Borel probability measures, supported in Euclidean spaces $\mathbb{R}^m$. We work on the specific subset of these measures, which have finite second moment, and denote this by $\mathcal P_2(\mathbb{R}^m)$. We equip this subset with the classical $2$-Wasserstein distance $\mathcal W_2$. 

Here we make the following technical assumptions on parameters.

\begin{assumption}\label{assumption-1}
Assume that
  \begin{enumerate}
		\item The function $[0,T]\times{\Theta}\ni(t,\theta)\mapsto f(t,\theta,0,\delta_{\{0\}})$ is continuous, where $\Theta=\mathbb R$;
		\item the function {$f:[0,T]\times\Theta\times\mathbb R\times{\mathcal P(\mathbb R)}\to\mathbb R$} is such that $\partial_tf$ is bounded and has bounded derivatives with respect to $(\theta,x,\mu)$ up to the second order;
\item For $\varphi\in\{L, U\}$, {$\varphi\geq0$, and} there exist constants $C^\varphi_{11}$, $C^\varphi_{10}$, $C^\varphi_{20}$ such that
\begin{align}\label{assumption-L-U}
|\varphi'(x)|\leq C^\varphi_{11}|x|+C^\varphi_{10},\ |\varphi''(x)|\leq C^\varphi_{20}.
\end{align}
{ \item There exist positive constants $\alpha\in(0,2),\ C_g>0$ such that $|L(x)|+|U(x)|\leq C_g(|x|^{2-\alpha}+1),\ x\in\mathbb R.$}
\end{enumerate}
\end{assumption}
{ We will always suppose Assumption \ref{assumption-1} throughout the whole paper.} In Assumption \ref{assumption-1}, by derivative with respect to $\mu$ we mean the intrinsic derivative, the so-called Wasserstein derivative (see for instance \cite[Definition 2.2.2]{Cardaliaguet19} or \cite[Chapter 5]{Carmona2018-I} and the discussion therein). In particular, when we say differentiability with respect to the measure variable, we always mean the so-called fully $C^1$, $C^2$, etc. classes (see \cite[Chapter 5]{Carmona2018-I}). In what follows we use the notation $\partial_\mu$ to denote this intrinsic Wasserstein derivative. We denote by $\tilde x$ the new variable arising after applying $\partial_\mu$, and we display this after the measure variable as $\partial_\mu g(\mu,\tilde x)$, for any $g \in C^1(\mathcal P_2(\mathbb{R}^M)).$ Under the prescribed framework, we consider the following optimization problem:
\begin{problem}\label{problem}
  Minimizing \eqref{obj-func-2} over $\mathcal U^{ad}_t$.
\end{problem}
As is mentioned in the introduction, the optimization of $J_N$ under the constraint \eqref{sample-dyn} can be understood as a learning process with neural SDEs/ODEs. For a direct example, consider the case where $\sigma=0$ and the optimal control $\theta^*_N(t)$ is deterministic, optimizing $J_N$ can be viewed as using neural network to choose the best feedback function of the form $f\big(t,\theta(t),\cdot\big)$ for optimal control problems according to the sampled objective functional $J_N$, where $(x_1,\ldots,x_N)$ are the sample inputs drawn from a certain distribution. Here $\theta(t)$ is the parameter to be determined, which is part of the feedback function. See \cite{Hure21,Hure22} for similar discrete time models. Please see Section \ref{adaptation} for more interpretation on $\theta^*_N(t)$ in another concrete example.
Denote the value function to Problem \ref{problem} by
\begin{align}\label{def-V-N-O}
V_N(t,x_1,\ldots,x_N):=\inf_{\theta\in\mathcal U^{ad}_t}J_N(t,\theta,x_1,\ldots,x_N),
\end{align}
and $\theta^*_N(t,x_1,\ldots,x_N)$ one of the optimal feedback functions (if exists). Suppose that
\begin{align*}
\mathcal W_2\bigg(\frac1N\sum_{k=1}^N\delta_{x_k},\mu\bigg)\ \longrightarrow\ 0\quad\text{as}\quad N\to+\infty,
\end{align*}
where $\mu\in\mathcal{P}_2(\mathbb R)$ is a given probability measure. We are interested in establishing the quantitative convergence results of the value functions $V_N(t,x_1,\ldots,x_N)$, optimal feedback functions $\theta^*_N(t,x_1,\ldots,x_N)$ as well as optimal parameters $\theta^*_N(t)=\theta^*_N\big(t,X^{\theta^*_N,1}_N(t),\ldots,X^{\theta^*_N,N}_N(t)\big)$ when $N\to+\infty$. Under suitable assumptions, we give positive answers to these convergence questions in Section \ref{Discussion on convergence rate}. Towards that end, our first finding is that $V_N$ is the finite projection of a local Lipschitz function $\mathcal V$ defined on probability space. Such $\mathcal V$ is first defined on the set of empirical measures then extended to $\mathcal P_2(\mathbb R)$.
\begin{definition}\label{def-emp-meas}
For samples $x_1,\ldots,x_N\in\mathbb{R}$, denote by $\mu^N$ the corresponding empirical measure $\mu^N=\frac1N\sum_{i=1}^N\delta_{x_i}.$ Define
\begin{align}\label{def-U}
{\cal V}(t,\mu^N):=V_N(t,x_1,\ldots,x_N).
\end{align}
\end{definition}
We will show in Lemma \ref{well-define} that Definition \ref{def-emp-meas} is meaningful, i.e., if $\frac1N\sum_{i=1}^N\delta_{x_i}=\frac1M\sum_{i=1}^M\delta_{y_i}$ for some $N,M\in\mathbb{N}$, then $V_N(t,x_1,\ldots,x_N)=V_M(t,y_1,\ldots,y_M).$ Moreover, we show in Section \ref{Discussion on convergence rate} that $\mathcal V$ is locally Lipschitz, which implies the convergence rate of $V_N$ as summarized in Corollary \ref{VN-con-rate}. The Lipschitz continuity is formulated as follows. { We refer to the description of the additional assumptions Hypothesis (R) and Hypothesis (R1) in Subsection \ref{sec:32}.}
\begin{theorem}\label{extend-U}
Suppose {\bf Hypothesis (R)}. Let $\mu_1$ and $\mu_2$ be two empirical measures, then for each $t\in[0,T]$,
\begin{align*}
    |{\cal V}(t,\mu_1)-{\cal V}(t,\mu_2)|&\leq\tilde C_{71}\mathcal W_2(\mu_1,\mu_2)+{\tilde C_{72}\bigg[\mathcal W^2_2(\mu_1,\mu_2)+\bigg(\int_{\mathbb R}y^2\mu_1(dy)+\int_{\mathbb R}y^2\mu_2(dy)\bigg)\mathcal W_2(\mu_1,\mu_2)\bigg]},
\end{align*}
where the constants $\tilde C_{71},\ \tilde C_{72}$ are defined via those in Proposition \ref{1st-deri-2-1},
\begin{align}\label{tilde-C-7}
\tilde C_{71}=\tilde C_2(C^L_{11}+C^U_{11}+C^L_{10}+C^U_{10}),\quad\tilde C_{72}=\frac{\tilde C_2(C^L_{11}+C^U_{11})}2.
\end{align}
As a result, ${\cal V}(t,\cdot)$ can be uniquely extended to a local Lipschitz function on $\mathcal P_2(\mathbb R)$.
\end{theorem}
\begin{corollary}\label{VN-con-rate}
Suppose {\bf Hypothesis (R)} or {\bf Hypothesis (R1)}. Let $\mu^N:=\frac1N\sum_{i=1}^N\delta_{x_i}\to\mu$ in $(\mathcal P_2(\mathbb R),\mathcal W_{2}),$ as $N\to+\infty$. Then
\begin{align*}
\lim_{N\to+\infty}V_N(t,x_1,\ldots,x_N)={\cal V}(t,\mu),
\end{align*}
at a rate
\begin{align*}
\quad|V_N(t,x_1,\ldots,x_N)-{\cal V}(t,\mu)|\notag & \leq\tilde C_{71}\mathcal W_2(\mu^{N},\mu)\\
&+\tilde C_{72}\left[\mathcal W^2_2(\mu^N,\mu)+\left(\int_{\mathbb R}x^2\mu^N(dx)+\int_{\mathbb R} x^2\mu(dx)\right)\mathcal W_2(\mu_1,\mu_2)\right].
\end{align*}
\end{corollary}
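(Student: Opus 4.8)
The plan is to derive this corollary directly from Theorem~\ref{extend-U} and Definition~\ref{def-emp-meas}, by a density-and-stability argument. First I would recall that, by Lemma~\ref{well-define} and Definition~\ref{def-emp-meas}, $V_N(t,x_1,\ldots,x_N)=\mathcal V(t,\mu^N)$, so the assertion is purely a (quantitative) continuity statement for the single function $\mathcal V(t,\cdot)$ on $(\mathcal P_2(\mathbb R),\mathcal W_2)$. Since {\bf Hypothesis (R1)} contains {\bf Hypothesis (R)}, in both cases Theorem~\ref{extend-U} applies: $\mathcal V(t,\cdot)$ is locally Lipschitz and coincides with the unique continuous extension to $\mathcal P_2(\mathbb R)$ of the map $\mu^M\mapsto V_M(t,\cdot)$ defined on empirical measures, and the estimate of Theorem~\ref{extend-U} holds for every pair of empirical measures.

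Next I would upgrade this estimate from pairs of empirical measures to the pair $(\mu^N,\mu)$ with $\mu\in\mathcal P_2(\mathbb R)$ arbitrary. Finitely supported measures with rational masses --- hence empirical measures --- are $\mathcal W_2$-dense in $\mathcal P_2(\mathbb R)$, so one can choose empirical measures $\nu_k\to\mu$ in $\mathcal W_2$; recall that $\mathcal W_2$-convergence is equivalent to weak convergence together with convergence of second moments, so also $\int_{\mathbb R}y^2\,\nu_k(dy)\to\int_{\mathbb R}y^2\,\mu(dy)$. Applying Theorem~\ref{extend-U} to the pair $(\mu^N,\nu_k)$, letting $k\to\infty$, and using the continuity of the extension $\mathcal V(t,\cdot)$ (so that $\mathcal V(t,\nu_k)\to\mathcal V(t,\mu)$), of $\mathcal W_2$, and of the second-moment functional along $(\nu_k)$, one obtains
\[
|\mathcal V(t,\mu^N)-\mathcal V(t,\mu)|\le \tilde C_{71}\mathcal W_2(\mu^N,\mu)+\tilde C_{72}\Big[\mathcal W_2^2(\mu^N,\mu)+\Big(\int_{\mathbb R}y^2\mu^N(dy)+\int_{\mathbb R}y^2\mu(dy)\Big)\mathcal W_2(\mu^N,\mu)\Big].
\]

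Finally I would conclude. Since $\mu^N\to\mu$ in $\mathcal W_2$, we have $\mathcal W_2(\mu^N,\mu)\to0$ and $\sup_N\int_{\mathbb R}y^2\,\mu^N(dy)<\infty$ (again because $\mathcal W_2$-convergence forces second-moment convergence), hence the right-hand side above tends to $0$; together with $V_N(t,x_1,\ldots,x_N)=\mathcal V(t,\mu^N)$ this yields both the limit $\lim_N V_N(t,x_1,\ldots,x_N)=\mathcal V(t,\mu)$ and the stated rate. The only step that is not entirely routine is the second one: one must verify that the quantitative bound of Theorem~\ref{extend-U} survives the passage to the $\mathcal W_2$-limit. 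This is where it is essential that the extension produced in Theorem~\ref{extend-U} is the \emph{continuous} one and that the second-moment term is stable under $\mathcal W_2$-convergence; both facts are standard once one records the equivalence between $\mathcal W_2$-convergence and weak convergence plus convergence of second moments.
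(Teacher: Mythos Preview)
Your proposal is correct and follows the same approach as the paper: the corollary is stated there without proof, as an immediate consequence of Theorem~\ref{extend-U} together with Definition~\ref{def-emp-meas}. You have simply made explicit the density-and-limit argument needed to pass from the empirical-measure estimate of Theorem~\ref{extend-U} to a general limit $\mu\in\mathcal P_2(\mathbb R)$, which the paper leaves implicit in the phrase ``can be uniquely extended to a local Lipschitz function on $\mathcal P_2(\mathbb R)$.''
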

As will be shown in Lemma \ref{well-define-1}, for $t\in[0,T]$ and $\mu^N=\frac1N\sum_{i=1}^N\delta_{x_i}$, ${\cal V}$ is differentiable at $(t,\mu^N)$. Moreover,
\begin{align*}
 \partial_\mu{\cal V}(t,\mu^N,x_i)=N\partial_{x_i}V_N(t,x_1,\ldots,x_N),\ i=1,\ldots,N.
\end{align*}
Therefore the optimal feedback function for $V_N$ can be regarded as the finite projection of $\theta^*: \ [0,T]\times\mathcal P_2(\mathbb R)\mapsto \mathbb R$, where for empirical measure $\mu^N$ its definition is
\begin{align*}
\theta^*(t,\mu^N)&:=\theta^*_N\big(t,x,\nabla_xV_N(t,x)\big)\notag\\
  &=\argmin_{\theta\in\Theta}\bigg\{\frac\lambda2|\theta|^2+\int_{\mathbb R}f(t,\theta,y,\mu^N)\partial_\mu {\cal V}(t,\mu^N,y)\mu^N(dy)\bigg\}.
\end{align*}
In Section \ref{Discussion on convergence rate} under suitable assumptions, $\theta^*$ is proved to be Lipschitz continuous for arbitrary time horizon $T$.
\begin{theorem}\label{lip-lift-theta-1}
  Suppose the same assumptions as in Proposition \ref{dY/dx-1}.
  Let $\mu^N=\frac1N\sum_{i=1}^N\delta_{x_i}$, $\nu^N=\frac1N\sum_{i=1}^N\delta_{y_i}$. Then for $T>0$,
  \begin{align}\label{theta-convergence-1}
|\theta^*(t,\mu^N)-\theta^*(t,\nu^N)|\leq\tilde C_9\mathcal W_2(\mu^N,\nu^N),\ t\in[0,T],
\end{align}
for some $\tilde C_9=\tilde C_9(f,\lambda^{\frac12},T,L,U,(\lambda-\lambda_0)^{-\frac12})$.
\end{theorem}
One implication of the above is the quantitative estimates on optimal feedback functions.
\begin{corollary}\label{theta-con-rate-1}
Suppose that the assumptions of Theorem \ref{lip-lift-theta-1} take place and suppose that $\mu^N:=\frac1N\sum_{i=1}^N\delta_{x_i}\to\mu$ in $(\mathcal P_2(\mathbb R),\mathcal W_{2}),$ as $N\to+\infty$. Then for $T>0$,
\begin{align*}
\lim_{N\to+\infty}\theta^*_N\big(t,x,\nabla_xV_N(t,x)\big)=\theta^*(t,\mu),\ t\in[0,T],
\end{align*}
at a rate
\begin{align*}
&\quad|\theta^*_N\big(t,x,\nabla_xV_N(t,x)\big)-\theta^*(t,\mu)|\leq\tilde C_9\mathcal W_2(\mu^N,\mu).
\end{align*}
\end{corollary}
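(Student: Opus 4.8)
The plan is to obtain Corollary \ref{theta-con-rate-1} as an immediate consequence of the Lipschitz estimate in Theorem \ref{lip-lift-theta-1}, in exactly the same way that Corollary \ref{theta-con-rate} was deduced from Theorem \ref{lip-lift-theta}, the only difference being that we now work under {\bf Hypothesis (R1)} (hence for arbitrary $T>0$) and with the $\mathcal W_2$-Lipschitz constant $\tilde C_9$ in place of the $\mathcal W_1$-Lipschitz constant $\tilde C_8$. First I would recall the identification \eqref{measure-theta}: for $x=(x_1,\ldots,x_N)$ with empirical measure $\mu^N=\frac1N\sum_{i=1}^N\delta_{x_i}$ one has $\theta^*_N\big(t,x,\nabla_xV_N(t,x)\big)=\theta^*(t,\mu^N)$. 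This rewriting is legitimate because, by Lemma \ref{well-define-1} and Definition \ref{def-emp-meas}, the right-hand side of \eqref{measure-theta} depends on $x$ only through $\mu^N$. Thus the quantity to be estimated is $|\theta^*(t,\mu^N)-\theta^*(t,\mu)|$.

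Next I would argue that $\theta^*(t,\cdot)$, a priori defined only on (uniformly weighted) empirical measures, extends uniquely to a $\tilde C_9$-Lipschitz map from $(\mathcal P_2(\mathbb R),\mathcal W_2)$ into $\Theta$. The estimate \eqref{theta-convergence-1} is stated for two empirical measures $\mu^N,\nu^N$ sharing the same number of atoms, but the duplication argument already used in the proof of Theorem \ref{lip-lift-theta-1} — whose consistency for $\theta^*$ is precisely what Lemma \ref{well-define-1} provides — upgrades it to arbitrary pairs of empirical measures. Since the set of such empirical measures is dense in $(\mathcal P_2(\mathbb R),\mathcal W_2)$ and the latter is a complete metric space, the unique Lipschitz extension exists and retains the constant $\tilde C_9$; this is the same reasoning that justified the final sentence of Theorem \ref{lip-lift-theta} in the $\mathcal W_1$ setting.

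With this extension in hand the conclusion is immediate. Taking $\mu^N\to\mu$ in $\mathcal W_2$ as in the hypothesis, we get
\[
\big|\theta^*_N\big(t,x,\nabla_xV_N(t,x)\big)-\theta^*(t,\mu)\big|=\big|\theta^*(t,\mu^N)-\theta^*(t,\mu)\big|\le\tilde C_9\,\mathcal W_2(\mu^N,\mu),
\]
which is the claimed quantitative rate, and the right-hand side tends to $0$, giving the asserted convergence for every $t\in[0,T]$.

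I expect no serious obstacle here: the one point requiring a little care is the passage from the "equal number of atoms" inequality \eqref{theta-convergence-1} to a genuine Lipschitz extension on $\mathcal P_2(\mathbb R)$, but this rests entirely on the consistency of $\theta^*(t,\cdot)$ under sample duplication, which is already established in Lemma \ref{well-define-1}, so the proof is essentially a one-line corollary of Theorem \ref{lip-lift-theta-1}.
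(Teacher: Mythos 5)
Your proof is correct and follows essentially the same route the paper intends: the paper states Corollary \ref{theta-con-rate-1} without a displayed proof, introducing it as ``parallel to Corollary \ref{theta-con-rate}'', whose own proof is simply ``directly from \eqref{measure-theta} and Theorem \ref{lip-lift-theta}''; you have carried out precisely the analogous deduction from \eqref{measure-theta} and Theorem \ref{lip-lift-theta-1}, and your extra care in spelling out the Lipschitz extension to $(\mathcal P_2(\mathbb R),\mathcal W_2)$ via duplication (Lemma \ref{well-define-1}) and density of empirical measures is a faithful and slightly more explicit rendering of the step the paper leaves implicit.
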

For the case where the distribution of samples converge, Theorem \ref{lip-lift-theta-1} yields the 
convergence rate of the empirical measures supported on the optimal path for each sample; and above all, the 
convergence rate of optimal controls. 
From the viewpoint of neural networks, such convergence corresponds to the generality of neural networks.
\begin{proposition}\label{cauchy-sequence-1-0}
 Let $X^*_N = (X^{1,*}_N(s),\dots,X^{N,*}_N(s))_{s\in[0,T]}$, $N\geq1$ be the optimal path in Problem \ref{problem} with $t=0$ and initial values $x^{(1)}_N,x^{(2)}_N,\ldots,x^{(N)}_N$. Suppose that the assumptions of Theorem \ref{lip-lift-theta-1} take place and suppose that $\frac1N\sum_{i=1}^N\delta_{x^{(i)}_N}\to\mu\ \text{in}\ (\mathcal P_2(\mathbb R),\mathcal W_{2})$, as $N\to+\infty$. Then for $T>0$, there exists an adapted limit process $(\theta^*,\mu^*)$, where $\theta^*(s)\in\Theta$ and $\mu^*(s)\in\mathcal P_1(\mathbb R)$, $0\leq s\leq T$, such that $\mu^*(0)=\mu$ and 
 \begin{align}\label{cauchy-sequence-0-1}
 &\max_{s\in[0,T]}\mathcal W_1(\mu^*_N(s),\mu^*(s))\leq\hat C_9\mathcal W_1\big(\mu^*_N(0),\mu(0)\big)\ a.s.,\notag\\
 &|\theta^*_N\big(s,X^*_N(s),\nabla_xV_N(s,X^*_N(s))\big)-\theta^*(s)|\leq\hat C_9\mathcal W_1(\mu^*_N(0),\mu(0))\ a.s.
\end{align}
Here $\mu^*_N(s):=\frac1N\sum_{i=1}^N\delta_{X^{i,*}_N(s)}$ and $\hat C_9>0$ is a constant independent of $N$.
\end{proposition}
\begin{proof}
 The proof is the same as in Proposition \ref{cauchy-sequence-0-0}. 
\end{proof}

We also note here that Theorem \ref{lip-lift-theta-1}, Corollary \ref{theta-con-rate-1} and Proposition \ref{cauchy-sequence-1-0} have short time counter parts that will be discussed in Section \ref{Discussion on convergence rate}.

\section{The auxiliary problems and corresponding uniform estimates}\label{Auxiliary problems and corresponding uniform estimates}
{In order to study the aforementioned convergence as well as the convergence rate, we establish uniform derivative estimates on $V_N$ as the number of variables increases, which is different from the usual PDE estimates. Our results include the uniform estimates on the first and the second order derivatives of $V_N$. These estimates are used in Section \ref{Discussion on convergence rate}. It turns out (as we will see in the next section) that the estimates on the first order derivatives yield the convergence rate of $V_N(t,x_1,\ldots,x_N)$, while the estimates on the second order derivatives yield the convergence rate of $\theta^*_N(t,x_1,\ldots,x_N)$.}
\subsection{The auxiliary problems and the estimates on the first order derivatives}
To solve \eqref{def-V-N-O}, the dynamic programming principle yields the HJB equation
\begin{align}
\left\{\begin{aligned}
    &\partial_tV_N+\frac{\sigma^2}2\sum_{i,j=1}^N\partial^2_{x_ix_j}V_N+\inf_{\theta\in\Theta}\bigg\{\frac\lambda2\big|\theta\big|^2+\sum_{i=1}^Nf\bigg(t,\theta,x_i,\frac1N\sum_{j=1}^N\delta_{x_j}\bigg)\partial_{x_i}V_N+\frac1 N\sum_{i=1}^NL(x_i)\bigg\}=0,\\
&\quad(t,x)\in[0,T)\times\mathbb R^N,\\
\label{HJB-V-N}&V_N(T,x_1,\ldots,x_N)=\frac1N\sum_{i=1}^NU(x_i),\quad x\in\mathbb R^N.
\end{aligned}\right.
\end{align}
The equation \eqref{HJB-V-N} is degenerate parabolic, as the Fourier symbol of the second order differential operator is given by
\begin{align*}
\frac{\sigma^2}2\sum_{i,j=1}^N\xi_i\xi_j=\frac{\sigma^2}2\left(\sum_{i=1}^N\xi_i\right)^2.
\end{align*}
Hence the classical solution to \eqref{HJB-V-N} is not guaranteed by standard results.

In order to study \eqref{HJB-V-N}, we introduce the following auxiliary equation with parameters $R=(R_1,R_2)$ and $\varepsilon$:
\begin{align}\label{HJB-V-N-1}
\left\{\begin{aligned}&\partial_tV^{\varepsilon,R}_N+\frac{\sigma^2}2\sum_{i,j=1}^N\partial^2_{x_ix_j}V^{\varepsilon,R}_N+\frac{\varepsilon^2}2\sum_{i=1}^N\partial^2_{x_ix_i}V^{\varepsilon,R}_N\\
&\qquad +\inf_{\theta\in\Theta_{R_2}}\bigg\{\frac\lambda2\big|\theta\big|^2+\sum_{i=1}^Nf\bigg(t,\theta,x_i,\frac1N\sum_{j=1}^N\delta_{x_j}\bigg)\partial_{x_i}V^{\varepsilon,R}_N + 
\frac1 N\sum_{i=1}^NL_{R_1}(x_i)\bigg\}=0,\\
&V^{\varepsilon,R}_N(T,x_1,\ldots,x_N)=\frac1N\sum_{i=1}^NU_{R_1}(x_i),
\end{aligned}\right.
\end{align}
where $\Theta_{R_2}:=\Theta\cap B_{R_2}(0)$ and for $\varphi\in\{L,U\}$ we have defined the smooth truncated version $\varphi_{R_1}$ satisfying
\begin{enumerate}
  \item $\varphi_{R_1}(x)=\varphi(x)$ on $x\in B_{R_1}(x)$, $|\varphi_{R_1}(x)|\leq|\varphi(x)|$;
  \item $\varphi_{R_1},\ \nabla_x\varphi_{R_1},\ \nabla^2_x\varphi_{R_1}$ are bounded;
  \item The derivatives satisfy
  \begin{align}\label{trun-deri}
    |\varphi'_{R_1}(x)|\leq C^\varphi_{11}|x|+C^\varphi_{10},\ |\varphi''_{R_1}(x)|\leq C^\varphi_{20}.
  \end{align}
\end{enumerate}
These derivative bounds and growth rates on the truncated functions can be guaranteed because of the main assumptions on $L,U$, which we imposed in Assumption \ref{assumption-1}.

The equation above corresponds to the auxiliary optimization problem with the underlying training processes
\begin{align}\label{sample-dyn-1}
  &dX^{\varepsilon,\theta,i}_N(t) = f\bigg(t, \theta(t),X^{\varepsilon,\theta,i}_N(t),\frac{1}{N}\sum_{j=1}^N\delta_{X^{\varepsilon,\theta,j}_N(t)}\bigg)dt+\varepsilon dW^i(t)+\sigma dW^0(t),\ i=1,\ldots,N,
\end{align}
and the admissible set $\mathcal U^{ad}_{s,R_2}$ consists of $\theta\in\mathcal U^{ad}_s$ with $|\theta(t)|\leq R_2$, $t\in[s,T]$, as well as the objective function $J^{\varepsilon,R_1}_N(s,\cdot): \mathcal U^{ad}_{s,R_2}\times\mathbb R^N\to\mathbb R$ is defined as
\begin{align*}
  J^{\varepsilon,R_1}_N(s,\theta,x_1,\ldots,x_N):=\mathbb E\bigg[\frac1 N\sum_{i=1}^N\int_s^TL_{R_1}\big(X^{ \varepsilon,\theta,i}_N(t)\big)dt+\frac1N\sum_{i=1}^NU_{R_1}\big(X^{ \varepsilon,\theta,i}_N(T)\big)+\frac\lambda2\int_s^T\left|\theta(t)\right|^2dt\bigg].
\end{align*}
Suppose that Assumption \ref{assumption-1} takes place. Then we have
\begin{align}\label{def-V-N}
V^{\varepsilon,R}_N(s,x_1,\ldots,x_N)=\inf_{\theta\in\mathcal U^{ad}_{s,R_2}}J^{\varepsilon,R_1}_N(s,\theta,x_1,\ldots,x_N).
\end{align}
{ According to \eqref{obj-func-2}, we may focus on the training processes satisfying
\begin{align*}
 \mathbb E\bigg[\int_s^T|\theta(t)|^2dt\bigg]<+\infty.
\end{align*}
In view of the variational representation, the estimates in Lemma \ref{moment-X-i} below, as well as the dominated convergence theorem,}  it is straightforward to show the following convergences
\begin{align}
\label{approximation-1}&\lim_{R_2\to+\infty}V^{\varepsilon,R}_N(s,x_1,\ldots,x_N)=\inf_{\theta\in\mathcal U^{ad}_s}J^{\varepsilon,R_1}_N(s,\theta,x_1,\ldots,x_N)=:V^{\varepsilon,R_1}_N(s,x_1,\ldots,x_N),
\end{align}
For the training processes in \eqref{sample-dyn-1} we introduce another objective functional $J^{\varepsilon}_N(s,\cdot): \mathcal U^{ad}_s\times\mathbb R^N\to\mathbb R$ { and related value function} defined as
\begin{align*}
 &J^\varepsilon_N(s,\theta,x_1,\ldots,x_N):={\mathbb E\bigg[\frac1 N\sum_{i=1}^N\int_s^TL\big(X^{\varepsilon,\theta,i}_N(t)\big)dt+\frac1N\sum_{i=1}^NU\big(X^{\varepsilon,\theta,i}_N(T)\big)+\frac\lambda2\int_s^T\left|\theta(t)\right|^2dt\bigg]},\notag\\
  &\quad{ V^\varepsilon_N(s,x_1,\ldots,x_N):=\inf_{\theta\in\mathcal U^{ad}_s}J^\varepsilon_N(s,\theta,x_1,\ldots,x_N).}
\end{align*}
After some modification of standard results on parabolic PDEs (that we detail below), we can show that the HJB equation \eqref{HJB-V-N-1} admits a solution {$V^{\varepsilon,R}_N\in C^{1+\frac\gamma2,2+\gamma}_{loc}\left([0,T)\times\mathbb R^N\right)\cap C\left([0,T]\times\mathbb R^N\right)$}. In this section, we establish uniform estimates on $V^{\varepsilon,R}_N$ and its first order derivatives, especially uniform in $(\varepsilon,N)$. Different from the usual PDE estimates, the estimates here are  focused more on the dimension of variables since the dimension, {which corresponds to the number of samples}, is now changing. We begin with the existence and uniqueness of classical solution to \eqref{HJB-V-N-1}.

\begin{lemma}\label{wp-epsln}
  The HJB equation \eqref{HJB-V-N-1} admits a unique bounded solution $V^{\varepsilon,R}_N\in C^{1+\frac\gamma2,4+\gamma}_{loc}\left([0,T)\times\mathbb R^N\right)\cap C\left([0,T]\times\mathbb R^N\right)$ where $0<\gamma<1$ and $\partial_tV^{\varepsilon,R}_N$, $\partial_{x_i}V^{\varepsilon,R}_N$, $\partial^2_{x_ix_j}V^{\varepsilon,R}_N$, $1\leq i,j\leq N$ are bounded.
\end{lemma}
\begin{proof}
  Notice that $L_{R_1}$ and $U_{R_1}$ as well as their derivatives are all bounded. According to Theorem 4.4.3, Theorem 4.7.2 and Theorem 4.7.4 in \cite{Krylov1980}, the value function $V^{\varepsilon,R}_N$ defined in \eqref{def-V-N} is the weak solution (in the distributional sense) to \eqref{HJB-V-N-1}, furthermore, $V^{\varepsilon,R}_N$ and its weak derivatives $\partial_tV^{\varepsilon,R}_N$, $\partial_{x_i}V^{\varepsilon,R}_N$, $\partial^2_{x_ix_j}V^{\varepsilon,R}_N$, $1\leq i,j\leq N$ are all bounded. Note also that for $(t,x)\in(0,T)\times\mathbb R^N$
  \begin{align}\label{regularity}
    \partial_tV^{\varepsilon,R}_N(t,x)+\frac{\sigma^2}2\sum_{i,j=1}^N\partial^2_{x_ix_j}V^{\varepsilon,R}_N(t,x)+\frac{\varepsilon^2}2\sum_{i=1}^N\partial^2_{x_ix_i}V^{\varepsilon,R}_N(t,x)=g(t,x),
  \end{align}
  where
  \begin{align*}
  g(t,x):=-\inf_{\theta\in\Theta_{R_2}}\bigg\{\frac\lambda2\big|\theta\big|^2+\sum_{i=1}^Nf\bigg(t,\theta,x_i,\frac1N\sum_{j=1}^N\delta_{x_j}\bigg)\partial_{x_i}V^{\varepsilon,R}_N+\frac1 N\sum_{i=1}^NL_{R_1}(x_i)\bigg\}.
\end{align*}
As is shown above, $\partial^2_{x_ix_j}V^{\varepsilon,R}_N$ is bounded. Moreover, we have from Corollary 4.7.8 of \cite{Krylov1980} that for $0<\gamma<1$, $\nabla_xV^{\varepsilon,R}_N(t,x)$ is $\frac\gamma2$-H\"older with respect to $t$ (uniformly in $x$). Hence $g(t,x)$ is locally Lipschitz continuous with respect to $x$ and H\"older continuous with respect to $t$. Let us view $V^{\varepsilon,R}_N$ as the solution to PDE \eqref{regularity} with constant coefficients, where the terminal conditions are the same as \eqref{HJB-V-N-1}. Standard results then yield that $\partial_tV^{\varepsilon,R}_N$, $\partial_{x_i}V^{\varepsilon,R}_N$, $\partial^2_{x_ix_j}V^{\varepsilon,R}_N\in C^{\frac\gamma2,\gamma}_{loc}\left([0,T)\times\mathbb R^N\right)$, $1\leq i,j\leq N$.

  As for the uniqueness, we can use the stochastic control interpretation to \eqref{HJB-V-N-1} and show that any solution $V^{\varepsilon,R}_N$ equals the value function in \eqref{def-V-N} by the standard verification results.
\end{proof}
Notice that at the moment the bound on $V^{\varepsilon,R}_N,\ \partial_{x_i}V^{\varepsilon,R}_N$, $\partial^2_{x_ix_j}V^{\varepsilon,R}_N$, $1\leq i,j\leq N$ might depend on {$\varepsilon$, $R$ and $N$}. Before establishing uniform estimates with respect to {$\varepsilon$, $R$ and $N$}, we need a refined analysis on the sample path, where the constants are independent of the dimension $N$ of $x=(x_1,\ldots,x_N)\in\mathbb R^N$ and the estimates admits a term of averaging due to the mean field effect.
\begin{lemma}\label{moment-X-i}
  Let $x_i\in\mathbb R$, $1\leq i\leq N$, $\theta\in\mathcal U^{ad}_{t_0,R_2}$ and $X^{\varepsilon,\theta,i}_N,$ $i=1,\dots,N$, be the {associated sample path in \eqref{sample-dyn-1}}. Then there exists a constant {$\tilde C_1=\tilde C_1(f,T)$ (depending only on $f,T$, independent of $N,\varepsilon,\sigma,R_1,R_2$)}, increasing in $T$, such that
\begin{align}\label{moment-X-i-1}
    {\mathbb E|X^{\varepsilon,\theta,i}_N(t)|^2}\leq\tilde C_1\bigg(1+|x_i|^2+\mathbb E\int_{t_0}^T\big|f\big(s,\theta(s),0,\delta_{\{0\}}\big)\big|^2ds+\frac1N\sum_{j=1}^N|x_j|^2\bigg).
\end{align}
{ And
\begin{align}
 \label{approximation-1-1}&\lim_{R_1\to+\infty}V^{\varepsilon,R_1}_N(t,x_1,\ldots,x_N)=V^\varepsilon_N(t,x_1,\ldots,x_N),\\
 \label{approximation-2}&\lim_{\varepsilon\to0}V^\varepsilon_N(t,x_1,\ldots,x_N)=V_N(t,x_1,\ldots,x_N).
\end{align}}
\end{lemma}
\begin{proof}
For $x_1,\ldots,x_N\in\mathbb R$, denote by
\begin{align*}
&\tilde f_i\big(s,\theta,x_1,\ldots,x_N\big):=f\bigg(s,\theta,x_i,{\frac{1}{N}\sum_{k=1}^N\delta_{x_k}}\bigg),
\end{align*}
then
\begin{align*}
  &\partial_{x_j}\tilde f_i\big(s,\theta,x_1,\ldots,x_N\big)=\delta_{ij}f_x\bigg(s,\theta,x_i,{\frac{1}{N}\sum_{k=1}^N\delta_{x_k}}\bigg)+{\frac1N\partial_\mu f\bigg(s,\theta,x_i,\frac{1}{N}\sum_{k=1}^N\delta_{x_k},x_j\bigg)},
\end{align*}
where $\delta_{ij}$ stands for the Kronecker symbol. { Define the set $M_N(C)\subset\mathbb R^{N\times N}$ with a parameter $C>0$ where
\begin{align}\label{definition-MNC}
  A\in M_N(C)\quad\text{if and only if}\quad|A_{ij}|\leq C(\delta_{ij}+N^{-1}),\ 1\leq i,j\leq N.
\end{align}
In view of the above and the mean value theorem, there exist a constant $C_1=C_1(f)$ and matrix process $\big(A_N(s)\big)_{s\in[0,T]}$ satisfying
\begin{align*}
 A_N(s)\in M_N(C_1),\ \big|(A_N(s))_{ij}\big|\leq C_1(\delta_{ij}+N^{-1}),\ 1\leq i,j\leq N,\ s\in[0,T],
\end{align*}
such that
\begin{align*}
&{X^{\varepsilon,\theta}_N(t)=x+\int_{t_0}^tf\big(s,\theta(s),0,\delta_{\{0\}}\big)\mathbf 1ds+\int_{t_0}^tA_N(s)X^{\varepsilon,\theta}_N(s)ds}\notag\\
&\quad+\varepsilon\mathbf W^N(t)+\sigma\mathbf1W^0(t),
\end{align*}
where
\begin{align*}
\mathbf W^N(t):=(W^1(t),\ldots,W^N(t))^\top, \mathbf1:=(1,\ldots,1)^\top.
\end{align*}
Solving the linear SDE above, we have}
\begin{align}\label{X-rep}
    &X^{{ \varepsilon,\theta},i}_N(t)=\big(\Phi^+_N(t)x\big)_i+\int_{t_0}^tf\big(s,\theta(s),0,\delta_{\{0\}}\big)\big(\Phi^+_N(t)\Phi^-_N(s)\mathbf 1\big)_ids\notag\\
    &+\varepsilon\bigg(\int_{t_0}^t\Phi^+_N(t)\Phi^-_N(s)d\mathbf W^N(s)\bigg)_i+\sigma\int_{t_0}^t\big(\Phi^+_N(t)\Phi^-_N(s)\mathbf 1\big)_idW^0(s),
\end{align}
where the matrix valued processes $\Phi^{\pm}_N(s)$ solve
\begin{align*}
\Phi^+_N(t)&=I_N+\int_{t_0}^tA_N(s)\Phi^+_N(s)ds,\quad\Phi^-_N(t)=I_N-\int_{t_0}^t\Phi^-_N(s)A_N(s)ds.
\end{align*}
Note that
\begin{align*}
 \frac d{dt}\left[\Phi^-_N(t)\Phi^+_N(t)\right]=0\quad\text{and}\quad\Phi^-_N(t_0)\Phi^+_N(t_0)=I_N,
\end{align*}
thus $\Phi^-_N(t)\Phi^+_N(t)=\Phi^-_N(t_0)\Phi^+_N(t_0)=I_N$.

  According to Lemma \ref{forward-scaling-eqn}, $\Phi^{\pm}_N(s)\in M_N(C_2)$ for some $C_2=C_2(f,T)$ because {$A_N(s)\in M_N(C_1)$}. Moreover, $\Phi^+_N(t)\Phi^-_N(s)\in M_N(C_2)$ due to Lemma \ref{multiply-close}. An application of Burkholder--Davis--Gundy inequality (see e.g. \cite{Yong1999}) to the $i$-th component in \eqref{X-rep} gives the estimate \eqref{moment-X-i-1}.
  
  Next we turn to \eqref{approximation-1-1}. Denote by $\theta^\delta$ the $\delta$-optimal strategy for $V^\varepsilon_N(t,x_1,\ldots,x_N)$. In view of \eqref{moment-X-i-1}, the growth condition on $L,\ U$, and the dominated convergence theorem, we obtain
  \begin{align*}
  \limsup_{R_1\to+\infty}V^{\varepsilon,R_1}_N(t,x_1,\ldots,x_N)\leq\lim_{R_1\to+\infty}J^{\varepsilon,R_1}_N(t,\theta^\delta,x_1,\ldots,x_N)=J^{\varepsilon}_N(t,\theta,x_1,\ldots,x_N)\leq V^\varepsilon_N(t,x_1,\ldots,x_N)+\delta.
  \end{align*}
Hence $\limsup_{R_1\to+\infty}V^{\varepsilon,R_1}_N(t,x_1,\ldots,x_N)\leq V^\varepsilon_N(t,x_1,\ldots,x_N).$ It remains to show the inverse inequality.

Denote by $\theta^{R_1,\delta}$ the $\delta$-optimal strategy for $V^{\varepsilon,R_1}_N(t,x_1,\ldots,x_N)$, in view of \eqref{moment-X-i-1} and the growth conditions in Assumption \ref{assumption-1},
\begin{align*}
 &\quad J^{\varepsilon,R_1}_N(t_0,\theta^{R_1,\delta},x_1,\ldots,x_N)\\
 &\geq\mathbb E\bigg[-\frac CN\sum_{i=1}^N\int_{t_0}^T\big|X^{\varepsilon,\theta^{R_1,\delta},i}_N(t)\big|^{2-\alpha}dt-\frac CN\sum_{i=1}^N\big|X^{\varepsilon,\theta^{R_1,\delta},i}_N(T)\big|^{2-\alpha}-C+\lambda\int_{t_0}^T|\theta^{R_1,\delta}(t)|^2dt\bigg]\\
 &\geq-C\bigg(1+|x_i|^2+\mathbb E\int_{t_0}^T\big|f\big(s,\theta^{R_1,\delta}(s),0,\delta_{\{0\}}\big)\big|^2ds+\frac1N\sum_{j=1}^N|x_j|^2\bigg)^{\frac{2-\alpha}2}+\mathbb E\bigg[\lambda\int_{t_0}^T|\theta^{R_1,\delta}(t)|^2dt\bigg]\\
&\geq-\mathbb E\bigg[C\int_{t_0}^T|\theta^{R_1,\delta}(t)|^2dt\bigg]^{\frac{2-\alpha}2}-C_x+\mathbb E\bigg[\lambda\int_{t_0}^T|\theta^{R_1,\delta}(t)|^2dt\bigg]\geq\frac\lambda2\mathbb E\bigg[\int_{t_0}^T|\theta^{R_1,\delta}(t)|^2dt\bigg]-\hat C_x,
\end{align*}
 where the first inequality in the last line above is obtained via the Taylor's expansion, for the last inequality we have used Young's inequality  and $C_x,\ \hat C_x$ are positive constants depending on $x=(x_1,\ldots,x_N)$. Since $\theta^{R_1,\delta}$ is $\delta$-optimal, $V^{\varepsilon,R_1}_N(t,x_1,\ldots,x_N)$ is bounded below uniformly in $R_1$ by $-\hat C_x+\delta$, and we pick a subsequence along which the limit $\lim_{R_1\to+\infty}V^{\varepsilon,R_1}_N(t,x_1,\ldots,x_N)$ exists. Hence 
 \begin{align*}
  &\quad\limsup_{R_1\to+\infty}\mathbb E\bigg[\int_{t_0}^T|\theta^{R_1,\delta}(t)|^2dt\bigg]\leq2\lambda^{-1}\limsup_{R_1\to+\infty}\big(J^{\varepsilon,R_1}_N(t_0,\theta^{R_1,\delta},x_1,\ldots,x_N)+\hat C_x\big)\\
  &\leq2\lambda^{-1}\lim_{R_1\to+\infty}\big(V^{\varepsilon,R_1}_N(t_0,x_1,\ldots,x_N)+\delta+\hat C_x\big).
 \end{align*}
In view of the above estimates, \eqref{moment-X-i-1} as well as the growth conditions on $L,\ U$, we get that the families $\big(L_{R_1}(X^{\varepsilon,\theta^{R_1,\delta},i}_N(t))\big)_{0\leq\varepsilon<1,R_1>0}$ and $\big(U_{R_1}(X^{\varepsilon,\theta^{R_1,\delta},i}_N(t))\big)_{0\leq\varepsilon<1,R_1>0}$ are both uniformly integrable. Hence
\begin{align*}
 \lim_{R_1\to+\infty}\mathbb E\bigg[\frac1 N\sum_{i=1}^N\int_s^T|L_{R_1}-L|\big(X^{\varepsilon,\theta^{R_1,\delta},i}_N(t)\big)dt+\frac1N\sum_{i=1}^N|U_{R_1}-U|\big(X^{\varepsilon,\theta^{R_1,\delta},i}_N(T)\big)\bigg]=0.
\end{align*}
Therefore
\begin{align*}
  &\quad\delta+\lim_{R_1\to+\infty}V^{\varepsilon,R_1}_N(t,x_1,\ldots,x_N)\geq\liminf_{R_1\to+\infty}J^{\varepsilon,R_1}_N(s,\theta^{R_1,\delta},x_1,\ldots,x_N)=\liminf_{R_1\to+\infty}J^{\varepsilon}_N(s,\theta^{R_1,\delta},x_1,\ldots,x_N)\\
  &\geq V^\varepsilon_N(t,x_1,\ldots,x_N).
  \end{align*}
Therefore, altogether we have obtained
\begin{align*}
V^\varepsilon_N(t,x_1,\ldots,x_N) \le \delta+\lim_{R_1\to+\infty}V^{\varepsilon,R_1}_N(t,x_1,\ldots,x_N) \le \delta+ V^\varepsilon_N(t,x_1,\ldots,x_N).
\end{align*}
Thus, by the arbitrariness of $\delta$ we obtained 
$$
\lim_{R_1\to+\infty}V^{\varepsilon,R_1}_N(t,x_1,\ldots,x_N) = V^\varepsilon_N(t,x_1,\ldots,x_N),
$$
for any convergent subsequence. So, the convergence does not depend on the particular subsequence ant it must take place for the full family $(V^{\varepsilon,R_1}_N(t,x_1,\ldots,x_N))_{R_1>0}$.

  Hence \eqref{approximation-1-1} follows. The equality \eqref{approximation-2} can be obtained using the similar approach and estimates to the above.
\end{proof}

\begin{remark}
Take $\theta(t)\equiv0$ (which is admissible since $0\in\Theta$), then \eqref{moment-X-i-1} can be rephrased as
\begin{align}\label{moment-X-i-2}
{\mathbb E|X^{\varepsilon,0,i}_N(t)|^2}\leq\tilde C_1\bigg(1+|x_i|^2+\frac1N\sum_{j=1}^N|x_j|^2\bigg).
\end{align}
\end{remark}
Based on Lemma \ref{moment-X-i}, we can go on with the estimates on the first derivatives. {In the context below, the values of constants {$C_k$, $\tilde C_k$, $k\geq1$,} might vary, but their dependence on the model parameters remains the same.}

For $(t,p,q,z,\theta)\in[0,T]\times\mathbb R^N\times\mathbb R^N\times\mathbb R^{N\times N}\times\mathbb R^N$, define the following Hamiltonian
\begin{align}
  \label{H-N-R-0}H^{R_1}_N(t,x,p,\theta)&:=\lambda\big|\theta\big|^2+\sum_{i=1}^Nf\bigg(t,\theta,x_i,{\frac1N\sum_{j=1}^N\delta_{x_j}}\bigg)p_i+\frac1 N\sum_{i=1}^NL_{R_1}(x_i),
\end{align}
as well as, for later use,
\begin{align}
  \label{H-N-R-1}H_N(t,x,p,\theta)&:=\lambda\big|\theta\big|^2+\sum_{i=1}^Nf\bigg(t,\theta,x_i,{\frac1N\sum_{j=1}^N\delta_{x_j}}\bigg)p_i+\frac1 N\sum_{i=1}^NL(x_i).
\end{align}
With the preparation above, we show the following estimates on $V^{\varepsilon,R}_N$ in \eqref{HJB-V-N-1}.
\begin{lemma}\label{1st-deri}
  Let $x_i\in\mathbb R$, { $t\in[0,T],$} $1\leq i\leq N$ {and $V^{\varepsilon,R}_N$ be the solution to \eqref{HJB-V-N-1}}. Then there exists a constant $\tilde C_2=\tilde C_2(f,\lambda^{-\frac12},T)$, increasing in $T,\lambda^{-\frac12}$, independent of {$N$, $\sigma$, $\varepsilon$ and $R$} such that for $1\leq i\leq N,$
{\begin{align}\label{1st-deri-scaling-1}
    \big|\partial_{x_i}V^{\varepsilon,R}_N(t,x)\big|&\leq\frac{\tilde C_2(C^L_{11}+C^U_{11})}N\bigg(1+|x_i|^2+\frac1N\sum_{j=1}^N|x_j|^2\bigg)^\frac12+\frac{\tilde C_2(C^L_{10}+C^U_{10})}N.
\end{align}
}
\end{lemma}

\begin{proof}
We begin by showing the existence of a constant $\hat C_2=\hat C_2(f,T)$ such that { for $(t,x)\in[0,T]\times\mathbb R^N,$}
\begin{align}\label{1st-deri-scaling}
    \left|\partial_{x_i}V^{\varepsilon,R}_N(t,x)\right|&\leq\frac{\hat C_2(C^L_{11}+C^U_{11})}N\bigg(1+|x_i|^2+{\mathbb E\bigg[\int_0^T|f\big(s,\theta^*(s),0,\delta_{\{0\}}\big)|ds}\bigg]+\frac1N\sum_{j=1}^N|x_j|^2\bigg)^{\frac12}\notag\\
&\quad+\frac{\hat C_2(C^L_{10}+C^U_{10})}N,
\end{align}
where $\theta^*$ is the optimal control process.

  It suffices to show the existence of such $\hat C_2$ for $t=0$. For other $t\in[0,T]$ the proof and $\hat C_2$ can be deduced in the same way.

  In view of Lemma \ref{wp-epsln}, the HJB equation \eqref{HJB-V-N-1} admits a classical solution. Following the standard verification procedure (see e.g. \cite{Fleming2006}), one can show the existence of an optimal control (in the weak sense) and the corresponding optimal path. Hence we may denote by $\theta^*(t)$ and $(X^*_N(t),Y^*_N(t))$ the optimal control, optimal path as well as the adjoint process. According to the stochastic maximum principle { (see Theorem 3.2 in \cite[Chapter 3]{Yong1999})}, { the optimal state process together with the adjoint equation satisfy the system} 
\begin{align}\label{ad-eqn}\left\{\begin{aligned}
dY^{*,i}_N(t)&=-\partial_{x_i}H^{R_1}_N\left(t,X^*_N(t),Y^*_N(t),\theta^*(t)\right)dt+\sum_{j=0}^NZ^{ij}_N(t)dW^j(t),\\
dX^{*,i}_N(t)&=\partial_{p_i}H^{R_1}_N\left(t,X^*_N(t),Y^*_N(t),\theta^*(t)\right)dt+\varepsilon dW^i(t)+\sigma dW^0(t),\\
X^*_N(0)&=x,\quad Y^{*,i}_N(T)=\frac1NU'_{R_1}\left(X^{*,i}_N(T)\right).\end{aligned}\right.
\end{align}
Here $Y^*_N,\ X^*_N\in\mathbb R^N$, $Z_N\in\mathbb R^{N\times N}$, and recall that $H^{R_1}_N(t,x,p,\theta)$ is given in \eqref{H-N-R-0}. In view of the relationship between the maximum principle and HJB equation for stochastic optimal control problems (see { Theorem 3.1 in} \cite[Chapter 5]{Yong1999}), we have $Y^{*,i}_N(0)=\partial_{x_i}V^{\varepsilon,R}_N({ 0},x),\ 1\leq i\leq N.$

Rewrite \eqref{ad-eqn} in the following manner:
\begin{align}\label{ad-eqn-1}\left\{\begin{aligned}
dY^{*,i}_N(t)&=-\bigg[\sum_{j=1}^NA^N_{ij}(t)Y^{*,j}_N(t)+\frac1NL'_{R_1}(X^{*,i}_N(t))\bigg]dt+\sum_{j=0}^NZ^{ij}_N(t)dW^j(t)\\
  dX^{*,i}_N(t)&=f\bigg(t,\theta^*(t),X^{*,i}_N(t),\frac1N\sum_{j=1}^N\delta_{X^{*,j}_N(t)}\bigg)dt+\varepsilon dW^i(t)+\sigma dW^0(t),\\
X^*_N(0)&=x,\quad Y^{*,i}_N(T)=\frac1NU'_{R_1}\big(X^{*,i}_N(T)\big),\end{aligned}\right.
\end{align}
where for $1\leq i,j\leq N$,
\begin{align}\label{def-AN}
  A^N_{ij}(t)&:=\delta_{ij}f_x\bigg(t,\theta^*(t),X^{*,j}_N(t),\frac1N\sum_{k=1}^N\delta_{X^{*,k}_N(t)}\bigg)+{\frac1N\partial_\mu f\bigg(t,\theta^*(t),X^{*,j}_N(t),\frac1N\sum_{k=1}^N\delta_{X^{*,k}_N(t)},X^{*,i}_N(t)\bigg)}.
\end{align}
Consider the matrix valued processes $\Phi^{\pm}_N(t)\in\mathbb R^{N\times N}$ solving
\begin{align}
\label{Phi-2}\Phi^+_N(t)&=I_N-\int_0^tA^N(s)\Phi^+_N(s)ds,\\
\label{Phi-3}\Phi^-_N(t)&=I_N+\int_0^t\Phi^-_N(s)A^N(s)ds.
\end{align}
Then
\begin{align}\label{discount-dyn}
  \Phi^-_N(t)Y^*_N(t)=\frac1N\mathbb E_t\big[\Phi^-_N(T)U'_{R_1}\big(X^*_N(T)\big)\big]+\frac1N\mathbb E_t\bigg[\int_t^T\Phi^-_N(s)L'_{R_1}\big(X^*_N(s)\big)ds\bigg],
\end{align}
where
\begin{align*}
  &U'_{R_1}(X^*_N(T)):=\left(U'_{R_1}(X^{*,1}_N(T)),\ldots,U'_{R_1}(X^{*,N}_N(T))\right)^\top,\notag\\ &L'_{R_1}(X^*_N(s)):=\left(L'_{R_1}(X^{*,1}_N(s)),\ldots,L'_{R_1}(X^{*,N}_N(s))\right)^\top.
\end{align*}
In particular,
\begin{align*}
  Y^*_N(0)=\frac1N\mathbb E\left[\Phi^-_N(T)U'\left(X^*_N(T)\right)\right]+\frac1N\mathbb E\left[\int_0^T\Phi^-_N(s)L'_{R_1}\left(X^*_N(s)\right)ds\right],
\end{align*}
and thus
\begin{align}\label{QN0}
  Y^{*,i}_N(0)^2\leq\frac2{N^2}\big|\mathbb E\big(\Phi^-_N(T)U'_{R_1}(X^*_N(T))\big)_i\big|^2+{\frac{2T}{N^2}\int_0^T\big|\mathbb E\big(\Phi^-_N(s)L'_{R_1}(X^*_N(s))\big)_i\big|^2ds}.
\end{align}
According to \eqref{def-AN}, $A_N\in M_N(C_1)$ with $C_1=C_1(f)$. In view of Lemma \ref{forward-scaling-eqn} and \eqref{Phi-3}, for $A_N(t)\in M_N(C_1)$, it follows that
{\begin{align*}
\mathbb E\Phi^-_N(s)\in M_N(C_2),\ C_2=C_2(f,T),\ s\in[0,T].
\end{align*}}
Note \eqref{assumption-L-U} and \eqref{trun-deri}, for the $i$-th entry of $\Phi^-_N(T)U'_{R_1}\left(X^*_N(T)\right)$:
\begin{align}\label{QN0-1}
&\quad\big|\mathbb E\big(\Phi^-_N(T)U'_{R_1}(X^*_N(T))\big)_i\big|^2\notag\\
  &\leq2\mathbb E\bigg[\big(\Phi^-_N(T)\big)^2_{ii}\bigg]\mathbb E\bigg[U'_{R_1}\big(X^{*,i}_N(T)\big)^2\bigg]+2\mathbb E\bigg[\sum_{\substack{j=1\\j\neq i}}^N\big(\Phi^-_N(T)\big)^2_{ij}\bigg]\mathbb E\bigg[\sum_{\substack{j=1\\j\neq i}}^NU'_{R_1}\big(X^{*,j}_N(T)\big)^2\bigg]\notag\\
  &\leq C_2\mathbb E\bigg[U'_{R_1}\big(X^{*,i}_N(T)\big)^2\bigg]+\frac{C_2}N\mathbb E\bigg[\sum_{\substack{j=1\\j\neq i}}^NU'_{R_1}\big(X^{*,j}_N(T)\big)^2\bigg]\notag\\
  &\leq C_2(C^U_{11})^2\bigg(1+\mathbb E\big[|X^{*,i}_N(T)|^2\big]+\frac1N\sum_{j=1}^N\mathbb E\big[|X^{*,j}_N(T)|^2\big]\bigg)+C_2(C^U_{10}+1)^2.
\end{align}
Similarly,
\begin{align}\label{QN0-2}
&\quad\big|\mathbb E\big(\Phi^-_N(s)L'_{R_1}(X^*_N(s))\big)_i\big|^2\notag\\
  &\leq C_2(C^U_{11})^2\bigg(1+\mathbb E\big[|X^{*,i}_N(s)|^2\big]+\frac1N\sum_{j=1}^N\mathbb E\big[|X^{*,j}_N(s)|^2\big]\bigg)+C_2(C^U_{10}+1)^2.
\end{align}
In view of Lemma \ref{moment-X-i},
\begin{align}\label{moment-X-star}
  \mathbb E\big[\big|X^{*,i}_N(t)\big|\big]\leq\tilde C_1\bigg(|x_i|^2+\mathbb E\int_0^t{|f\big(s,\theta^*(s),0,\delta_{\{0\}}\big)|}ds+\frac1N\sum_{j=1}^N|x_j|^2\bigg).
\end{align}
Plugging \eqref{QN0-1}, \eqref{QN0-2} and \eqref{moment-X-star} into \eqref{QN0}, we obtain \eqref{1st-deri-scaling}.

To further prove \eqref{1st-deri-scaling-1}, it suffices to prove that there exist a constant $\check C=\check C(f,\lambda^{-\frac12},T)$ (increasing in $\lambda^{-\frac12}$) such that
\begin{align*}
\mathbb E\int_0^t|f\big(s,\theta^*(s),0,\delta_{\{0\}}\big)|ds\leq\check C\bigg(1+\frac1N\sum_{i=1}^N|x_i|^2\bigg)^{\frac12}.
\end{align*}
In fact,
\begin{align*}
  \mathbb E\int_0^t|f\big(s,\theta^*(s),0,\delta_{\{0\}}\big)|ds\leq\int_0^t|f\big(s,0,0,\delta_{\{0\}}\big)|ds+\|f_\theta\|_\infty\mathbb E\int_0^t|\theta^*(s)|ds.
\end{align*}
And we notice that
\begin{align}\label{uni-est-theta}
  \mathbb E\int_0^t|\theta^*(s)|ds&\leq T^\frac12\mathbb E\bigg(\int_0^T|\theta^*(s)|^2ds\bigg)^\frac12\leq (2T)^\frac12\lambda^{-\frac12}J_N(\theta^*,0,x_1,\ldots,x_N)^{\frac12}\notag\\
  &\leq (2T)^\frac12\lambda^{-\frac12}J_N(0,0,x_1,\ldots,x_N)^{\frac12}\leq\check C\bigg(1+\frac1N\sum_{i=1}^N|x_i|^2\bigg)^{\frac12},
\end{align}
where the last inequality holds because of \eqref{moment-X-i-2}. Hence we may deduce \eqref{1st-deri-scaling-1} from the estimates above.
\end{proof}
{The next lemma shows that}, thanks to Lemma \ref{1st-deri}, we may drop the {parameter $R_2$ in \eqref{HJB-V-N-1}} and consider
\begin{align}\label{glb-1}
\left\{\begin{aligned}&\partial_tV^{\varepsilon,R_1}_N+\frac{\sigma^2}2\sum_{i,j=1}^N\partial^2_{x_ix_j}V^{\varepsilon,R_1}_N+\frac{\varepsilon^2}2\sum_{i=1}^N\partial^2_{x_ix_i}V^{\varepsilon,R_1}_N+\tilde H^{R_1}_N(t,x,\nabla_xV^{\varepsilon,R_1}_N)=0,\\
&V^{\varepsilon,R_1}_N(T,x_1,\ldots,x_N)=\frac1N\sum_{i=1}^NU_{R_1}(x_i),
\end{aligned}\right.
\end{align}
where for $(t,x,p)\in[0,T]\times\mathbb R^N\times\mathbb R^N$,
\begin{align}\label{glb-H}
  \tilde H^{R_1}_N(t,x,p):=\inf_{\theta\in\mathbb R}\bigg\{\frac\lambda2\big|\theta\big|^2+\sum_{i=1}^Nf\bigg(t,\theta,x_i,\frac1N\sum_{j=1}^N\delta_{x_j}\bigg)p_i+\frac1 N\sum_{i=1}^NL_{R_1}(x_i)\bigg\}.
\end{align}
\begin{lemma}\label{1st-deri-2-0}
The equation \eqref{glb-1} admits a unique classical solution $V^{\varepsilon,R_1}_N\in C^{1+\frac\gamma2,2+\gamma}_{loc}\left([0,T)\times\mathbb R^N\right)\cap C\left([0,T]\times\mathbb R^N\right)$ where for $0<\gamma<1$ and $V^{\varepsilon,R_1}_N$, $\partial_tV^{\varepsilon,R_1}_N$, $\partial_{x_i}V^{\varepsilon,R_1}_N$, $\partial^2_{x_ix_j}V^{\varepsilon,R_1}_N$, $1\leq i,j\leq N$ are bounded. Moreover, the derivatives $\partial_{x_i}V^{\varepsilon,R_1}_N$, $1\leq i\leq N$, satisfy
\begin{align}\label{1st-deri-scaling-2-0}
    \big|\partial_{x_i}V^{\varepsilon,R_1}_N(t,x)\big|&\leq\frac{\tilde C_2(C^L_{11}+C^U_{11})}N\bigg(1+|x_i|^2+\frac1N\sum_{j=1}^N|x_j|^2\bigg)^\frac12+\frac{\tilde C_2(C^L_{10}+C^U_{10})}N,
\end{align}
where the constant $\tilde C_2$ is from Lemma \ref{1st-deri}.
\end{lemma}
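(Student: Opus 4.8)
The plan is to obtain Lemma \ref{1st-deri-2-0} as a limiting statement of Lemma \ref{wp-epsln} and Lemma \ref{1st-deri} as $R_2\to+\infty$, combined with a separate existence/regularity argument for the limiting equation \eqref{glb-1}. First I would observe that, by Lemma \ref{1st-deri}, the gradient bound \eqref{1st-deri-scaling-1} on $V^{\varepsilon,R}_N$ is uniform in $R_2$; in particular, for fixed $(\varepsilon,R_1,N)$ the family $\{V^{\varepsilon,R}_N\}_{R_2>0}$ has locally uniformly bounded gradients, and by \eqref{def-V-N} it is monotone in $R_2$ with the pointwise limit $V^{\varepsilon,R_1}_N$ identified in \eqref{approximation-1}. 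The key point is that once $\Theta$ is replaced by all of $\mathbb{R}$, the truncation $R_2$ becomes inactive precisely because the candidate optimal $\theta$ in the infimum defining $\tilde H^{R_1}_N$ is a priori bounded: from the first-order condition for the Hamiltonian, the minimizing $\theta$ satisfies $\lambda\theta = -\sum_i f_\theta(t,\theta,x_i,\cdot)\,p_i$, so $|\theta|\le \lambda^{-1}\|f_\theta\|_\infty \sum_i |p_i|$, and plugging in the gradient bound \eqref{1st-deri-scaling-1} shows the minimizer lies in a ball whose radius depends on $(x,N)$ but not on $R_2$. Hence for $R_2$ large enough (locally in $x$) the equations \eqref{HJB-V-N-1} and \eqref{glb-1} have the same Hamiltonian along solutions, so $V^{\varepsilon,R}_N$ solves \eqref{glb-1} locally.

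Next I would set up the regularity bootstrap exactly as in the proof of Lemma \ref{wp-epsln}: with $\varepsilon>0$ the operator $\frac{\sigma^2}2\sum_{i,j}\partial^2_{x_ix_j}+\frac{\varepsilon^2}2\sum_i\partial^2_{x_ix_i}$ is uniformly elliptic, $L_{R_1},U_{R_1}$ and their derivatives are bounded, and by the verification/DPP interpretation $V^{\varepsilon,R_1}_N$ is the value function of the control problem with unbounded control set, hence a bounded weak solution of \eqref{glb-1}. The Hamiltonian $\tilde H^{R_1}_N$ is, for $p$ in a bounded set, Lipschitz in $p$ and locally Lipschitz in $x$ (the infimum of smooth functions over $\theta$, with the minimizer controlled as above and $f$ having bounded derivatives by Assumption \ref{assumption-1}(2)); so $g(t,x):=-\tilde H^{R_1}_N(t,x,\nabla_x V^{\varepsilon,R_1}_N)$ is locally Lipschitz in $x$ and, via the Hölder-in-$t$ regularity of $\nabla_x V^{\varepsilon,R_1}_N$ from the Krylov estimates, Hölder in $t$; Schauder theory applied to the constant-coefficient parabolic equation \eqref{regularity}-analogue then upgrades $V^{\varepsilon,R_1}_N$ to $C^{1+\gamma/2,2+\gamma}_{loc}([0,T)\times\mathbb{R}^N)\cap C([0,T]\times\mathbb{R}^N)$ with bounded $\partial_t$, $\partial_{x_i}$, $\partial^2_{x_ix_j}$. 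Uniqueness again follows from the stochastic-control verification argument.

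Finally, the gradient estimate \eqref{1st-deri-scaling-2-0} is inherited: either pass to the limit $R_2\to+\infty$ in \eqref{1st-deri-scaling-1} using that $\tilde C_2$ is independent of $R_2$ and that $\partial_{x_i}V^{\varepsilon,R}_N\to\partial_{x_i}V^{\varepsilon,R_1}_N$ locally uniformly (which follows from the interior Schauder estimates plus the pointwise convergence), or simply rerun the proof of Lemma \ref{1st-deri} verbatim — the adjoint equation \eqref{ad-eqn-1}, the matrix $A^N$ in \eqref{def-AN}, the scaling-class membership $\Phi^\pm_N\in M_N(C_2)$, and the moment bounds of Lemma \ref{moment-X-i} all go through unchanged since none of them used the constraint $|\theta|\le R_2$, only the $L^2$-integrability of $\theta^*$ guaranteed by the regularizer as in \eqref{uni-est-theta}. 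I expect the main obstacle to be the careful justification that the unbounded-control Hamiltonian $\tilde H^{R_1}_N$ retains enough regularity (local Lipschitz in $x$, Hölder in $t$) for the Schauder bootstrap — i.e. controlling the minimizer $\theta=\theta(t,x,p)$ and its dependence on $(t,x)$ uniformly on compact sets — rather than the limiting or the a priori estimate parts, which are routine.
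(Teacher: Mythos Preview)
Your overall strategy is sound and reaches the same conclusion, but you take a more circuitous route than the paper and in doing so you overlook the simplification that makes the paper's proof almost immediate. The paper's key observation is that, for fixed $R_1$, the truncations $L_{R_1}$ and $U_{R_1}$ have \emph{bounded} derivatives (property (2) of the truncation, not just the linear-growth bound \eqref{trun-deri}). Re-running the adjoint argument of Lemma~\ref{1st-deri} with these global bounds in place of the linear-growth constants yields a bound on $|\nabla_x V^{\varepsilon,R}_N|$ that is uniform in \emph{both} $R_2$ and $x$. Consequently the minimizing $\theta$ in the Hamiltonian is bounded by a constant independent of $x$, so a single sufficiently large $R_2$ renders the constraint inactive everywhere: for that $R_2$ one has literally $V^{\varepsilon,R}_N = V^{\varepsilon,R_1}_N$ as functions on $[0,T]\times\mathbb R^N$, and every regularity statement from Lemma~\ref{wp-epsln} transfers verbatim. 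No limit in $R_2$, no separate Schauder bootstrap, and no need to justify that the unconstrained value function is a weak solution.

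By contrast, you plug in the $x$-dependent bound \eqref{1st-deri-scaling-1} and conclude only that the constraint is inactive ``locally in $x$''. This forces you into a two-step program: a local identification, plus an independent argument that the unconstrained value function $V^{\varepsilon,R_1}_N$ is a bounded weak solution of \eqref{glb-1} with bounded second derivatives. That second step is precisely where your proposal is thin: Krylov's results in \cite{Krylov1980} that give bounded $\partial_t,\partial_{x_i},\partial^2_{x_ix_j}$ are stated for compact control sets, so invoking them directly for $\Theta=\mathbb R$ needs justification you have not supplied. Your approach can be completed (e.g.\ by showing $V^{\varepsilon,R}_N$ is eventually constant in $R_2$ on each compact set and patching), but the paper's global-bound observation avoids the issue entirely. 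The gradient estimate \eqref{1st-deri-scaling-2-0} then follows exactly as you say, since it is already satisfied by $V^{\varepsilon,R}_N$ with constants independent of $R_2$.
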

\begin{proof}
  We recall Lemma \ref{wp-epsln} saying that \eqref{HJB-V-N-1} admits classical solutions $V^{\varepsilon,R}_N$. Moreover, since $L_{R_1}$ and $U_{R_1}$ both have bounded derivatives, in view of Lemma \ref{1st-deri}, $\big|\nabla_xV^{\varepsilon,R}_N\big|$ is bounded by a constant independent of $R_2$. Therefore, for sufficiently large $R_2$, we have
  \begin{align}\label{large-R2}
    &\quad\inf_{\theta\in\Theta_{R_2}}\bigg\{\frac\lambda2\big|\theta\big|^2+\sum_{i=1}^Nf\bigg(t,\theta,x_i,\frac1N\sum_{j=1}^N\delta_{x_j}\bigg)\partial_{x_i}V^{\varepsilon,R}_N(t,x)+\frac1 N\sum_{i=1}^NL_{R_1}(x_i)\bigg\}\notag\\
    &=\inf_{\theta\in\mathbb R}\bigg\{\cdots\bigg\}=\tilde H^{R_1}_N\big(t,x,\nabla_xV^{\varepsilon,R}(t,x)\big).
  \end{align}
  In other words, for those $R_2$ satisfying \eqref{large-R2}, $V^{\varepsilon,R}_N$ solves \eqref{glb-1}. Choose an arbitrary $R_2$ such that $V^{\varepsilon,R}_N$ satisfies \eqref{large-R2} and denote it by $V^{\varepsilon,R_1}_N$. We thus have by Lemma \ref{wp-epsln} that $V^{\varepsilon,R}_N\in C^{1+\frac\gamma2,2+\gamma}_{loc}\left([0,T)\times\mathbb R^N\right)\cap C\left([0,T]\times\mathbb R^N\right)$ and $V^{\varepsilon,R}_N, \partial_tV^{\varepsilon,R}_N$, $\partial_{x_i}V^{\varepsilon,R}_N$, $\partial^2_{x_ix_j}V^{\varepsilon,R}_N$ are bounded. We can show the uniqueness via the variational arguments described in Lemma \ref{wp-epsln}. We can also obtain \eqref{1st-deri-scaling-2-0} from Lemma \ref{1st-deri} since it is satisfied by any $V^{\varepsilon,R}_N$.
\end{proof}

\subsection{The estimates on the second order derivatives}\label{sec:32}
In this section we establish uniform estimates on the second order derivatives $\partial^2_{x_ix_j}V^{\varepsilon,R_1}_N$, $1\leq i,j\leq N$ for the solution to  \eqref{glb-1} where the parameter $R_2$ has been dropped. {To do so, our idea is to formally take the derivatives with respect to $x_i$ and $x_j$ in \eqref{glb-1} and obtain the PDE system on $\partial^2_{x_ix_j}V^{\varepsilon,R_1}_N$, $1\leq i,j\leq N$. The above differentiation requires further analysis on the differentiability of Hamiltonian in \eqref{glb-1}. It then turns out that the aforementioned analysis involves the uniform estimates on the first order derivatives in \eqref{1st-deri-scaling-2-0}.} We can see from \eqref{1st-deri-scaling-2-0} that the first order derivatives therein are only locally bounded in general. In our forthcoming analysis, we propose some technical assumptions so as to deal with this non-global boundedness.

Let $\Gamma_1,\ \Gamma_2\in\mathbb R$. Denote by $\mathcal{A}_{N,\Gamma_1,\Gamma_2}$ the set consisting of $p=(p_1,\ \ldots,\ p_N)$, $x=(x_1,\ \ldots,\ x_N)\in\mathbb R^N$ satisfying
\begin{align}\label{hypo-regu-1}
  -G_N(x,\Gamma_1)\leq p_i\leq G_N(x,\Gamma_2),\quad1\leq i\leq N,
\end{align}
where { $G_N:\mathbb{R}^N\times\mathbb{R}\to\mathbb{R}$ is defined as}
\begin{align*}
 G_N(x,\Gamma):=\frac{\Gamma(C^L_{11}+C^U_{11})}N\bigg(1+|x_i|^2+\frac1N\sum_{j=1}^N |x_j|^2\bigg)^\frac12+\frac{\Gamma(C^L_{10}+C^U_{10})}N,\quad\Gamma\in\mathbb R
\end{align*}
and the constants $C_{11}^U,C_{10}^U, C_{11}^L,C_{10}^L$ are defined in Assumption \ref{assumption-1}.
In other words,
\begin{align}\label{def-set-AN}
\mathcal{A}_{N,\Gamma_1,\Gamma_2}:=\left\{(x,p)\in\mathbb R^N\times\mathbb R^N:(x,p)\ \text{satisfies}\ \eqref{hypo-regu-1}\right\}.
\end{align}

We assume that the following hold in the remaining of the paper.

\noindent{\bf Hypothesis (R).} Suppose the following
\begin{enumerate}
\item  There exists $\lambda_0>0$, such that for any $(\theta,x,p)\in\Theta\times { \mathcal{A}_{N,\Gamma_1,\Gamma_2}}$, $N\geq1$
\begin{align}\label{hypo-regu-2}
{ \lambda_0+\sum_{i=1}^N\partial^2_{\theta\theta}f\bigg(t,\theta,x_i,\frac1N\sum_{j=1}^N\delta_{x_j}\bigg)p_i\geq0.}
\end{align}
\item There exists $C^Q>0$ such that  for any $(\theta,x,p)\in\Theta\times \mathcal A_N$, $N\geq1$ and for $\varphi\in\{|\partial^2_{x\theta}f|,|\partial^2_{xx}f|\}$ and $\phi\in\{|\partial^2_{x\mu}f|,\ |\partial^2_{\theta\mu}f|\}$
\begin{align}\label{hypo-regu-3}
C^Q>\varphi\bigg(t,\theta,x_i,\frac1N\sum_{j=1}^N\delta_{x_j}\bigg)\cdot Np_i+{\phi\bigg(t,\theta,x_i,\frac1N\sum_{j=1}^N\delta_{x_j},x_i\bigg)}\cdot Np_i.
\end{align}
\item The coefficient $\lambda$ is taken such that $\lambda>\lambda_0$.
\end{enumerate}
\begin{remark}
In terms of {\bf Hypothesis (R)}, we have the following { comments:}
\begin{enumerate}
\item{ We emphasize that the hypothesis depends on $(\mathcal{A}_{N,\Gamma_1,\Gamma_2})_{N\geq1}$ with the parameters $\Gamma_1,\Gamma_2\in\mathbb R$ from \eqref{hypo-regu-1}. It is important to notice that the hypothesis eventually aims to impose a fine localized property on the set $\big\{\big(x,\nabla_xV^{\varepsilon,R_1}_N(t,x)\big):(t,x)\in[0,T]\times\mathbb R^N\big\}$.
In view of Lemma \ref{1st-deri-2-0}, for $(t,x)\in[0,T]\times\mathbb R^N$ and all $R_1>0$, $\big(x,\nabla_xV^{\varepsilon,R_1}_N(t,x)\big)\in\mathcal A_{N,\tilde C_2,\tilde C_2}$ in general. Therefore, a natural choice for $(\Gamma_1,\Gamma_2)$ is $(\tilde C_2,\tilde C_2)$. However, we will provide a class of examples when $V^{\varepsilon,R_1}_N(t,x)$ turns out to be increasing in every component of $x$, in which case we obtain a finer description of $\big(x,\nabla_xV^{\varepsilon,R_1}_N(t,x)\big)\in\mathcal A_{N,0,\tilde C_2}$, and in particular one can naturally choose $\Gamma_1 =0$. 
\item We also note that if $\partial^2_{\theta\theta}f\geq0$, then any $\lambda>0$ will fulfil {\bf Hypothesis (R)} when $(\Gamma_1,\Gamma_2)=(0,\tilde C_2)$. Lastly, such a formulation of $\mathcal{A}_{N,\Gamma_1,\Gamma_2}$ will serve for a more flexible description of the `localized' properties on $\big\{\big(x,\nabla_xV^{\varepsilon,R_1}_N(t,x)\big):(t,x)\in[0,T]\times\mathbb R^N\big\}$ in Lemma \ref{1st-deri-2-0-1} and Proposition \ref{dY/dx-1} below. }
    \item For an LQ model with uncontrolled diffusion, $\partial^2_{x\theta}f,\partial^2_{xx}f,\partial^2_{x\mu}f,\partial^2_{\theta\mu}f=0$ and \eqref{hypo-regu-2}, \eqref{hypo-regu-3} holds trivially.
    \item For $f$, $L$, $U$ with bounded derivatives, $C^L_{11}+C^U_{11}=0$ and \eqref{hypo-regu-2}, \eqref{hypo-regu-3} holds trivially.
    \item Given {\bf Hypothesis (R)} with parameters $\Gamma_1,\Gamma_2$, the lower bound of $\lambda$ can be expressed by the following:
    \begin{align*}
     \lambda>\sup_{N\geq1}\sup_{\substack{t\in[0,T],\\(\theta,x,p)\in\mathcal{A}_{N,\Gamma_1,\Gamma_2}}}\bigg\{-\sum_{i=1}^N\partial^2_{\theta\theta}f\big(t,\theta,x_i,\frac1N\sum_{j=1}^N\delta_{x_j}\big)p_i\bigg\}.
     \end{align*}
     It worth noting also that by taking the two canonical choices $(\Gamma_1,\Gamma_2)=(\tilde C_2,\tilde C_2)$ or $(\Gamma_1,\Gamma_2)=(0,\tilde C_2)$, and since $\tilde C_2$ is an increasing function of $\lambda^{-\frac12}$ and $\partial_{\theta\theta}^2f$ is uniformly bounded, the supremization problem in the previous sentence will always give the asymptotic behavior for $\lambda$ in the form of
     $$
     \lambda^{\frac32}\ge C(T,f),
     $$
    for some $C(T,f)>0$, uniformly on the set of empirical measures with uniformly bounded second moments.
\end{enumerate}
\end{remark}

Given \eqref{hypo-regu-2}, for $(x,p)\in \mathcal A_N$, the corresponding $H^{R_1}_N(t,x,p,\theta)$ in \eqref{H-N-R-0} is strictly convex in $\theta$. Hence the unique minimizer $\theta^{R_1}_N\in\Theta$ can be defined as a function of $(t,x,p)$ in such a way that
\begin{align}\label{theta-*}
\theta^{R_1}_N(t,x,p):=\arg\min_{\theta\in\Theta}H^{R_1}_N(t,x,p,\theta).
\end{align}
In light of the definition above, an optimal control $\theta^*(t)$ in \eqref{ad-eqn} can be represented as
\begin{align*}
\theta^*(t)=\theta^{R_1}_N(t,X^*_N(t),Y^*_N(t)).
\end{align*}
Thanks to {\bf Hypothesis (R)}, we can now show the Lipschitz continuity of the feedback function $\theta^{R_1}_N(t,x,p)$.
\begin{lemma}\label{theta-Lip}
  Suppose that {\bf Hypothesis (R)} {  with parameters $\Gamma_1,\ \Gamma_2$ in \eqref{hypo-regu-1} takes place}. Then $\theta^{R_1}_N(t,x,p)$ is locally Lipschitz continuous with respect to $(x,p)\in \mathcal A_{N, \Gamma_1,\Gamma_2}$ with derivatives
\begin{align}\label{theta-weak-deri}
    \big|\partial_{x_k}\theta^{R_1}_N(t,x,p)\big|\leq\frac{{(\lambda-\lambda_0)^{-1}}C^Q}N,\ \big|\partial_{p_k}\theta^{R_1}_N(t,x,p)\big|\leq{(\lambda-\lambda_0)^{-1}}\|f_\theta\|_\infty,\ k=1,\ldots,N.
\end{align}
\end{lemma}
\begin{proof}
We postpone the proof of this result to Appendix \ref{sec-App}.
\end{proof}
We have the following estimates on the coefficients based on Lemma \ref{theta-Lip}.
\begin{lemma}\label{coeff.}
   Suppose that the assumptions of Lemma \ref{theta-Lip} take place. Then there exists a constant $${\tilde C_3=\tilde C_3(f,\lambda^{\frac12},T,L,(\lambda-\lambda_0)^{-1})},$$ increasing in $T$, ${\lambda^{-\frac12},\ (\lambda-\lambda_0)^{-1}}$, such that for $(x,p)\in \mathcal A_N$,
  \begin{align}\label{hamilton-001}
   &\big|\partial_{x_i}\tilde H^{R_1}_N(t,x,p)\big|,\big|\partial^2_{x_ip_j}\tilde H^{R_1}_N(t,x,p)\big|\leq\tilde C_3N^{-1},\quad\big|\partial^2_{x_ix_j}\tilde H^{R_1}_N(t,x,p)\big|\leq\tilde C_3N^{-1}(\delta_{ij}+N^{-1}),\notag\\
    &\big|\partial_{p_i}\tilde H^{R_1}_N( t,x,p)\big|,\big|\partial^2_{p_ip_j}\tilde H^{R_1}_N(t,x,p)\big|\leq\tilde C_3,\quad1\leq i,j\leq N.
  \end{align}
\end{lemma}
\begin{proof}
  Recall 
  \eqref{glb-H} and \eqref{theta-*},
  \begin{align*}
    \tilde H^{R_1}_N(t,x,p)=H^{R_1}_N\big(t,x,p,\theta^{R_1}_N(t,x,p)\big),\quad(x,p)\in \mathcal A_N.
  \end{align*}
  Hence we can obtain the above estimates via \eqref{theta-weak-deri}.
\end{proof}

\subsubsection{Short time estimates}\label{sec-short-time}
As is mentioned before, with the preparation above, we may take partial derivatives in \eqref{glb-1} and derive the equation satisfied by $V^{\varepsilon,kl}_N:=\partial^2_{x_kx_l}V^{\varepsilon,R_1}_N$. We begin with a regularity results which validates the differentiation.
\begin{lemma}\label{1st-deri-2-0-1}
  Let $\Gamma^0_1,\ \Gamma^0_2\in\mathbb{R}$ be given constants such that $\big(x,\nabla_xV^{\varepsilon,R_1}_N(t,x)\big)\in\mathcal A_{N,\Gamma^0_1,\Gamma^0_2}$ for all $\varepsilon,R_1>0,\ (t,x)\in[0,T]\times\mathbb R^N$. Suppose that {\bf Hypothesis (R)} holds with $(\Gamma_1,\Gamma_2)=(\Gamma^0_1,\Gamma^0_2)$ in \eqref{hypo-regu-1}. 
 The equation \eqref{glb-1} admits a unique classical solution $V^{\varepsilon,R_1}_N\in C\left([0,T]\times\mathbb R^N\right)$ where $V^{\varepsilon,R_1}_N, \partial_{x_i}V^{\varepsilon,R_1}_N,\partial^2_{x_ix_j}V^{\varepsilon,R_1}_N, 1\leq i,j\leq N$ are bounded. Moreover for $0<\gamma<1$, $V^{\varepsilon,R_1}_N, \partial_{x_i}V^{\varepsilon,R_1}_N,\partial^2_{x_ix_j}V^{\varepsilon,R_1}_N\in C^{1+\frac\gamma2,2+\gamma}_{loc}\left([0,T)\times\mathbb R^N\right), 1\leq i,j\leq N$.
\end{lemma}
\begin{proof}
 In Lemma \ref{1st-deri-2-0} we have shown that the solution to \eqref{glb-1} $V^{\varepsilon,R_1}_N\in C([0,T]\times\mathbb R^N)$ has bounded derivatives $V^{\varepsilon,R_1}_N, \partial_{x_i}V^{\varepsilon,R_1}_N,\partial^2_{x_ix_j}V^{\varepsilon,R_1}_N, 1\leq i,j\leq N$. 
 In order to show the higher regularity of $V^{\varepsilon,R_1}_N$, let $R_2$ be sufficiently large and take $\partial_{x_i}$ ($1\leq i\leq N$) in \eqref{regularity}  to obtain the linear PDE satisfied by $\partial_{x_i}V^{\varepsilon,R_1}_N$. Notice that when $R_2$ is sufficiently large,
 \begin{align*}
  \partial_{x_i}g(t,x)=\partial_{x_i}\big(\tilde H_N(x,\nabla_xV^{\varepsilon,R_1}_N)\big)\in C^{\frac\gamma2,\gamma}_{loc}\left([0,T)\times\mathbb R^N\right).
 \end{align*}
So we have by the standard results on linear PDE that $\partial_{x_i}V^{\varepsilon,R_1}_N\in C^{1+\frac\gamma2,2+\gamma}_{loc}\left([0,T)\times\mathbb R^N\right), 1\leq i\leq N$. In view of Lemma \ref{coeff.}, we may repeat the previous procedure once more, i.e., take $\partial^2_{x_ix_j}$ ($1\leq i,j\leq N$) in \eqref{regularity} and show that $\partial^2_{x_ix_j}V^{\varepsilon,R_1}_N\in C^{1+\frac\gamma2,2+\gamma}_{loc}\left([0,T)\times\mathbb R^N\right), 1\leq i,j\leq N$.
\end{proof}

Denote by $V^{\varepsilon,R_1,kl}_N=\partial^2_{x_kx_l}V^{\varepsilon,R_1}_N$, $1\leq k,l\leq N$. By direct calculation, applying $\partial^2_{x_kx_l}$ to the equation \eqref{glb-1}, one obtains 
\begin{align}\label{glb-2}
\left\{\begin{aligned}\partial_tV^{\varepsilon,R_1,kl}_N&+\frac{\sigma^2}2\sum_{i,j=1}^N\partial^2_{x_ix_j}V^{\varepsilon,kl}_N+\frac{\varepsilon^2}2\sum_{i=1}^N\partial^2_{x_ix_i}V^{\varepsilon,kl}_N+\partial^2_{x_kx_l}\tilde H^{R_1}_N(t,x,\nabla_xV^{\varepsilon,R_1}_N)\\
&+\sum_{i=1}^N\partial_{p_i}\tilde H^{R_1}_N(t,x,\nabla_xV^{\varepsilon,R_1}_N)\partial_{x_i}V^{\varepsilon,R_1,kl}_N+\sum_{i,j=1}^N\partial^2_{p_ip_j}\tilde H^{R_1}_N(t,x,\nabla_xV^{\varepsilon,R_1}_N)V^{\varepsilon,R_1,ki}_NV^{\varepsilon,R_1,jl}_N\\
& +\sum_{i=1}^N\partial^2_{x_lp_i}\tilde H^{R_1}_N (t,x,\nabla_xV^{\varepsilon,R_1}_N)V^{\varepsilon,R_1,ki}_N+ \sum_{i=1}^N\partial^2_{x_kp_i}\tilde H^{R_1}_N (t,x,\nabla_xV^{\varepsilon,R_1}_N)V^{\varepsilon,R_1,li}_N\\
&=0,\\
&V^{\varepsilon,R_1,kl}_N(T,x_1,\ldots,x_N)=\frac{\delta_{kl}}NU''_{R_1}(x_k),\quad1\leq k,l\leq N.
\end{aligned}\right.
\end{align}
The equation above enables us to arrive to the results on the second order derivatives via nonlinear Feynman--Kac representation (see Section 7.4.2 in \cite{Yong1999}). In the current subsection we present the estimates on the second order derivatives for short time.
\begin{proposition}\label{dY/dx-1-0}
  Suppose that the assumptions of Lemma \ref{1st-deri-2-0-1} take place.
  There exists a constant {$\tilde c=\tilde c(f,\lambda^{-\frac12},L,U,(\lambda-\lambda_0)^{-1})$} and {$\tilde C_4=\tilde C_4(f,L,U,(\lambda-\lambda_0)^{-1})$, $\tilde C_4$ increasing in $(\lambda-\lambda_0)^{-1}$}, such that for $T<\tilde c$, PDE \eqref{glb-2} admits a unique bounded solution satisfying for $1\leq i,j\leq N$,
  \begin{align}\label{short-time-2nd-deri}
    \big|V^{\varepsilon,R_1,ij}_N(t,x)\big|\leq\tilde C_4N^{-1}(\delta_{ij}+N^{-1}),\quad(t,x)\in[0,T]\times\mathbb R^N.
  \end{align}
\end{proposition}
\begin{proof}
For $x=(x_1,\ldots,x_N)\in\mathbb R^N$, consider
\begin{align}\label{SDE-X-i}
  \left\{\begin{aligned}&dX^i(t)=\partial_{p_i}\tilde H^{R_1}_N\big(t,X(t),\nabla_xV^{\varepsilon,R_1}_N(t,X(t))\big)dt+\sigma dW^i(t)+\varepsilon dW^0(t),\ t\in[t_0,T],\\
  &X^i(t_0)=x_i,\ 1\leq i\leq N,\end{aligned}\right.
\end{align}
as well as
\begin{align}\label{def-matrix-Y}
  Y^{kl}(t)=V^{\varepsilon,R_1,kl}_N\big(t,X(t)\big),\quad1\leq k,l\leq N.
\end{align}
According to Lemma \ref{1st-deri-2-0}, we can deduce the existence of a constant $C$ depending on $R_1$ such that
\begin{align*}
  \big|\nabla_xV^{\varepsilon,R_1}_N(t,X(t))\big|\leq C.
\end{align*}
The estimate above and the first order condition associated to \eqref{glb-H} yields
\begin{align}\label{linear-growth-0}
  \big|\partial_{p_i}\tilde H^{R_1}_N\big(t,X_t,\nabla_xV^{\varepsilon,R_1}_N(t,X(t))\big)\big|\leq C(1+|X_t|).
\end{align}
Hence SDE \eqref{SDE-X-i} admits a weak solution satisying
\begin{align*}
  \mathbb E\bigg[\max_{0\leq t\leq T}|X(t)|^m\bigg]\leq C(1+|x|^m),\quad m\geq1.
\end{align*}
In view of Lemma \ref{1st-deri-2-0},
\begin{align*}
  |Y^{kl}(t)|\leq C,\quad1\leq k,l\leq N,
\end{align*}
where the constant $C$ might depend on $\varepsilon$ and $R_1$.

In view of \eqref{glb-2} and the estimates above, we can infer from the nonlinear Feynman--Kac representation that the matrix process $Y(t)$ satisfies the {backward stochastic Riccati equation}
\begin{align}\label{riccati-0-0}
  &Y(t)=\mathbb E_t\bigg\{\frac1N\tilde U(T)+\int_t^T\bigg[\nabla^2_{xx}\tilde H^{R_1}_N\big(s,X(s),\nabla_xV^{\varepsilon,R_1}_N(s,X(s))\big)+Y(s)\nabla^2_{xp}\tilde H^{R_1}_N\big(s,X(s),\nabla_xV^{\varepsilon,R_1}_N(s,X(s))\big)\notag\\
  &+\nabla^2_{xp}\tilde H^{R_1}_N\big(s,X(s),\nabla_xV^{\varepsilon,R_1}_N(s,X(s))\big)Y(s)+Y(s)\nabla^2_{pp}\tilde H^{R_1}_N\big(s,X(s),\nabla_xV^{\varepsilon,R_1}_N(s,X(s))\big)Y(s)\bigg]ds\bigg\},
\end{align}
where the matrix $\tilde U(T)$ is given by
\begin{align}\label{tilde-UT}
  \tilde U^{ij}(T)=\delta_{ij}U''\big(X^i(T)\big),\quad1\leq i,j\leq N.
\end{align}
Next, define the mapping from the set of adapted matrix processes to itself
\begin{align*}
  \Phi:\quad L^\infty\big(\Omega;C([0,T];\mathbb R^{N\times N})\big)\ \longrightarrow L^\infty\big(\Omega;C([0,T];\mathbb R^{N\times N})\big),\quad{\Phi(Y)=\tilde Y,}
\end{align*}
such that for $t\in[0,T]$,
\begin{align*}
  \tilde Y(t)=\mathbb E_t\bigg[\frac1N\tilde U(T)+\int_t^T\big[\nabla^2_{xx}\tilde H^{R_1}_N(s,X(s),\nabla_sV^{\varepsilon,R_1}_N(s,X(s)))+Y(s)\nabla^2_{xp}\tilde H^{R_1}_N(s,X(s),\nabla_xV^{\varepsilon,R_1}_N(s,X(s)))\notag\\
  +\nabla^2_{xp}\tilde H^{R_1}_N(s,X(s),\nabla_sV^{\varepsilon,R_1}_N(s,X(s)))Y(s)+Y(s)\nabla^2_{pp}\tilde H^{R_1}_N(s,X(s),\nabla_sV^{\varepsilon,R_1}_N(s,X(s)))Y(s)\big]ds\bigg].
\end{align*}

We can see that $Y_t$ in \eqref{def-matrix-Y} is a fixed point of $\Phi$. Next we show that such fixed point is unique. In fact, let $Y^*_t$ and $Y^{**}_t$ be two bounded fixed points. And consider their norm of the following form
\begin{align*}
  \big\|Y^*\big\|=\max_{0\leq t\leq T}\big\|Y^*(t)\big\|_\infty:=\max_{0\leq t\leq T}\max_{1\leq i\leq N}\sum_{j=1}^N\big|Y^{*,ij}(t)\big|\leq C,\quad\max_{0\leq t\leq T}\big\|Y^{**}(t)\big\|_\infty\leq C.
\end{align*}
Then for $t\in[T-\delta,T]$ and $\tilde C=\tilde C_3$ depending only on $\tilde C_3$ from \eqref{hamilton-001},
\begin{align*}
  &\quad\big\|Y^*(t)-Y^{**}(t)\big\|_\infty\notag\\
  &\leq\mathbb E_t\bigg[\int_t^T\big(\big\|Y^*(s)\big\|_\infty+\big\|Y^{**}(s)\big\|_\infty\big)\big\|\nabla^2_{pp}\tilde H^{R_1}_N\big(s,X(s),\nabla_xV^\varepsilon_N(s,X(s))\big)\big\|_\infty\big\|Y^*(s)-Y^{**}(s)\big\|_\infty ds\bigg]\notag\\
  &\quad+2\mathbb E_t\bigg[\int_t^T\big\|\nabla^2_{xp}\tilde H^{R_1}_N(s,X(s),\nabla_xV^\varepsilon_N(s,X(s)))\big\|_\infty\big\|Y^*(s)-Y^{**}(s)\big\|_\infty ds\bigg]\notag\\
  &\leq 2(C+1)\tilde C\mathbb E_t\bigg[\int_t^T\big\|Y^*(s)-Y^{**}(s)\big\|_\infty ds\bigg]\leq2(C+1)\tilde C\delta\max_{T-\delta\leq s\leq T}\big\|Y^*(s)-Y^{**}(s)\big\|_\infty\quad a.s..
\end{align*}
Choose $2(C+1)\tilde C\delta<1$, then the inequality above implies that $Y^*(t)=Y^{**}(t)$ on $t\in[T-\delta,T]$. Repeat the above procedure, we can show that $Y^*(t)=Y^{**}(t)$ on $t\in[T-\delta,T]$, $[T-2\delta,T-\delta]$ and after finite times repetitions we obtain $Y^*(t)=Y^{**}(t)$ on $t\in[0,T]$. The uniqueness above thus { tells us that} $Y$ in \eqref{def-matrix-Y} is the only bounded fixed point of $\Phi$.

To continue, define the closed subset $\mathcal B(N,K)$ of adapted matrix processes in such a way that $Y\in\mathcal B(N,K)$ if and only if
\begin{align}\label{cal B-N-1}
    \max_{t\in[0,T]}|Y^{ij}(t)|\leq KN^{-1}(\delta_{ij}+N^{-1})\quad {\rm{a.s.}},
\end{align}
where the constant $K>0$ is to be determined.

We claim that for appropriate $K$ and $\tilde c$ (independent of $N$), $\Phi$ is invariant on $\mathcal B(N,K)$, and $\Phi$ is a contraction mapping on $\mathcal B(N,K)$ with $T<\tilde c$.

Let $Y^{(1)}$ and $Y^{(2)}$ be two inputs from $\mathcal B(N,K)$ and $\tilde Y^{(1)}$ and $\tilde Y^{(2)}$ be the associated outputs.
\begin{align*}
  &\quad\big\|\tilde Y^{(1)}(t)-\tilde Y^{(2)}(t)\big\|_\infty=\max_{1\leq i\leq N}\sum_{j=1}^N\big|\tilde Y^{(1),ij}(t)-\tilde Y^{(2),ij}(t)\big|\notag\\
  &\leq\mathbb E_t\bigg[\int_t^T\big(\big\|Y^{(1)}(s)\big\|_\infty+\big\|Y^{(2)}(s)\big\|_\infty\big){\big\|\nabla^2_{pp}\tilde H^{R_1}_N(s,X(s),\nabla_xV^\varepsilon_N(s,X(s)))\big\|_\infty}\big\|Y^{(1)}(s)-Y^{(2)}(s)\big\|_\infty ds\bigg]\notag\\
  &\quad+2\mathbb E_t\bigg[\int_t^T\big\|\nabla^2_{xp}\tilde H^{R_1}_N\big(s,X_s,\nabla_xV^\varepsilon_N(s,X(s))\big)\big\|_\infty\big\|Y^{(1)}(s)-Y^{(2)}(s)\big\|_\infty ds\bigg]\notag\\
  &\leq(2K+1)\tilde CT\max_{0\leq s\leq T}\big\|Y^{(1)}(s)-Y^{(2)}(s)\big\|_\infty\quad {\rm{a.s.}},
\end{align*}
where $\tilde C$ is increasing in $T$ because by Lemma \ref{coeff.} the constant $\tilde C_3$ is increasing in $T$. Let's further fix the parameter $T$ in $\tilde C$ to be $T=1$ and obtain {$\tilde C=\tilde C(f,\lambda^{-\frac12},L,(\lambda-\lambda_0)^{-1})$}. Hence for $T<1$,
\begin{align*}
  \max_{0\leq t\leq T}\big\|\tilde Y^{(1)}(t)-\tilde Y^{(2)}(t)\big\|_\infty\leq(2K+1)\tilde CT\max_{0\leq t\leq T}\big\|Y^{(1)}(t)-Y^{(2)}(t)\big\|_\infty\quad {\rm{a.s.}}.
\end{align*}
We thus have that if we choose $K$, $\tilde c$ satisfying
\begin{align*}
  (2K+1)\tilde C\tilde c<1,\quad\tilde c<1,
\end{align*}
then $\Phi$ is a contraction mapping on $\mathcal B(N,K)$ with $T<\tilde c$. Next we show that $\mathcal B(N,K)$ is invariant for appropriate $K$ and $\tilde c$. Denote by $Y(t)\in\mathcal B(N,K)$ the input and $\tilde Y(t)$ the output, then Lemma \ref{coeff.} and direct calculation yield
\small
\begin{align*}
  |\tilde Y^{ij}(t)|&\leq\mathbb E_t\bigg[\frac1N|\tilde U^{ij}(T)|+\int_t^T\bigg|\partial^2_{x_ix_j}\tilde H^{R_1}_N(s,X(s),\nabla_xV^{\varepsilon,R_1}_N(s,X(s)))+\sum_{k=1}^NY^{ik}(s)\partial^2_{p_kx_j}\tilde H^{R_1}_N\big(s,X(s),\nabla_xV^\varepsilon_N(s,X(s))\big)\notag\\
  &\quad\sum_{k=1}^N\partial^2_{x_ip_k}\tilde H^{R_1}_N\big(s,X(s),\nabla_xV^\varepsilon_N(s,X(s))\big)Y^{kj}(s)+\sum_{k,l=1}^NY^{ik}(s)\partial^2_{p_kp_l}\tilde H^{R_1}_N\big(s,X(s),\nabla_xV^\varepsilon_N(s,X(s))\big)Y^{lj}(s)\bigg|ds\bigg]\notag\\
  &\leq\delta_{ij}\tilde CN^{-1}+\tilde C\tilde cN^{-1}(\delta_{ij}+N^{-1})+\tilde C\tilde cKN^{-2}\sum_{k=1}^N(\delta_{ik}+N^{-1})+\tilde C\tilde cKN^{-2}\sum_{k=1}^N(\delta_{kj}+N^{-1})\notag\\
  &\quad+\tilde C\tilde cK^2N^{-2}\sum_{k,l=1}^N(\delta_{ik}+N^{-1})(\delta_{lj}+N^{-1})\notag\\
  &=\delta_{ij}\tilde CN^{-1}+\tilde C\tilde cN^{-1}(\delta_{ij}+N^{-1})+4\tilde C\tilde cKN^{-2}+4\tilde C\tilde cK^2N^{-2}.
\end{align*}
\normalsize
It is easy to see that we can choose $K$, $\tilde c$ such that
 \begin{align*}
   K=K\big(f,\lambda^{-\frac12},L,(\lambda-\lambda_0)^{-1}\big),\quad\tilde c=\tilde c\big(f,\lambda^{-\frac12},L,(\lambda-\lambda_0)^{-1}\big),
 \end{align*}
 and
\begin{align*}
  KN^{-1}(\delta_{ij}+N^{-1})>\delta_{ij}\tilde CN^{-1}+\tilde C\tilde cN^{-1}(\delta_{ij}+N^{-1})+4\tilde C\tilde cKN^{-2}+4\tilde C\tilde cK^2N^{-2}.\end{align*}
Then we have for such $K$, $\tilde c$ that $\mathcal B(N,K)$ is invariant.

Since $\Phi$ is contractive and invariant on $(\mathcal B(N,K),\|\cdot\|_\infty)$, which is a Banach space, it follows that $\Phi$ admits a fixed point in $\mathcal B(N,K)$ when $T<\tilde c$. Note that processes in $\mathcal B(N,K)$ are all bounded. Therefore the aforementioned fixed point in $\mathcal B(N,K)$ is nothing but the matrix process in \eqref{def-matrix-Y} and we may take $\tilde C_4=K$. Consider $t=t_0$ in \eqref{def-matrix-Y}, then we have \eqref{short-time-2nd-deri} from \eqref{cal B-N-1}.
\end{proof}
We can see from the proof above that $\tilde C_4$ actually depends on $U', U''$ rather than $U$.
\subsubsection{Global in time estimates}
In this subsection we focus on the global estimates for any given $T>0$ with sufficiently smooth data. As will be seen, the global estimates rely heavily on the convexity assumption (with respect to $x$) on the Hamiltonian {$\tilde H_N(t,x,p)$} in \eqref{H-N-R}. However, the truncation of $L$, $U$ might break the convexity of {$\tilde H_N(t,x,p)$}. Therefore, we need to pass $R_1$ to infinity in \eqref{glb-1} and consider
\begin{align}\label{glb-3}
\left\{\begin{aligned}&\partial_tV^\varepsilon_N+\frac{\sigma^2}2\sum_{i,j=1}^N\partial^2_{x_ix_j}V^\varepsilon_N+\frac{\varepsilon^2}2\sum_{i=1}^N\partial^2_{x_ix_i}V^\varepsilon_N+\tilde H_N(t,x,\nabla_xV^\varepsilon_N)=0,\\
&V^\varepsilon_N(T,x_1,\ldots,x_N)=\frac1N\sum_{i=1}^NU(x_i),
\end{aligned}\right.
\end{align}
Here
\begin{align}\label{H-N-R}
  \tilde H_N(t,x,p):=\inf_{\theta\in\mathbb R}\bigg\{\frac\lambda2\big|\theta\big|^2+\sum_{i=1}^Nf\bigg(t,\theta,x_i,\frac1N\sum_{j=1}^N\delta_{x_j}\bigg)p_i+\frac1 N\sum_{i=1}^NL(x_i)\bigg\}.
\end{align}
Similarly to \eqref{glb-2}, we would like to take $\partial^2_{x_kx_l}$ in \eqref{glb-3} and analysis the resulting system. To do so, we show the validity of taking derivatives in the next proposition.
\begin{proposition}\label{1st-deri-2-1}
Suppose that the assumptions of Lemma \ref{1st-deri-2-0-1} take place.
The PDE \eqref{glb-3} admits a unique classical solution $V^{\varepsilon,R_1}_N\in C\left([0,T]\times\mathbb R^N\right)$ where $V^{\varepsilon,R_1}_N,\partial_tV^{\varepsilon,R_1}_N, \partial_{x_i}V^{\varepsilon,R_1}_N,\partial^2_{x_ix_j}V^{\varepsilon,R_1}_N, 1\leq i,j\leq N$ are bounded. For $0<\gamma<1$ and $1\leq i,j\leq N$, $V^\varepsilon_N, \partial_{x_i}V^\varepsilon_N,\partial^2_{x_ix_j}V^\varepsilon_N\in C^{1+\frac\gamma2,2+\gamma}_{loc}\left([0,T)\times\mathbb R^N\right)$. And for $\varphi\in\{V^\varepsilon_N,\partial_tV^\varepsilon_N, \partial_{x_i}V^\varepsilon_N, \partial^2_{x_ix_j}V^\varepsilon_N\}$, $1\leq i,j\leq N$, $\varphi$ has polynomial growth in $x$:
\begin{align}\label{VN-growth-1}
  |\varphi(t,x)|\leq \breve{C}(1+|x|)^7,\quad(t,x)\in[0,T)\times\mathbb R^N.
\end{align}
Here the constant $\breve C$ depends only on $f,\ L,\ U,\ \sigma,\ \varepsilon$. Moreover, the solution $V^{\varepsilon,R_1}_N$ to \eqref{glb-1} satisfies
\begin{align*}
  \lim_{R_1\to+\infty}(V^{\varepsilon,R_1}_N,\partial_tV^{\varepsilon,R_1}_N,\partial_{x_i}V^{\varepsilon,R_1}_N,\partial^2_{x_ix_j}V^{\varepsilon,R_1}_N)(t,x)=(V^\varepsilon_N,\partial_{t}V^\varepsilon_N,\partial_{x_i}V^\varepsilon_N,\partial^2_{x_ix_j}V^\varepsilon_N)(t,x),
\end{align*}
where the convergence is {locally} uniform on $[0,T]\times\mathbb R^N$.

As a result, {for} the first order derivatives {of} $V^{\varepsilon,R_1}_N$, we also have
\begin{align}\label{1st-deri-scaling-2}
    \big|\partial_{x_i}V^\varepsilon_N(t,x)\big|&\leq\frac{\tilde C_2(C^L_{11}+C^U_{11})}N\bigg(1+|x_i|^2+\frac1N\sum_{j=1}^N|x_j|^2\bigg)^\frac12+\frac{\tilde C_2(C^L_{10}+C^U_{10})}N,
\end{align}
where the constant $\tilde C_2$ is from Lemma \ref{1st-deri}.
\end{proposition}
\begin{proof}
  Let $V^{\varepsilon,R_1}_N$ be the solution to \eqref{glb-1} in Lemma \ref{1st-deri-2-0}. According to Theorem 4.7.2 and Theorem 4.7.4 in \cite{Krylov1980} as well as the growth condition \eqref{trun-deri}, we have that for $\varphi\in \{V^{\varepsilon,R_1}_N,\partial_tV^{\varepsilon,R_1}_N, \partial_{x_i}V^{\varepsilon,R_1}_N, \partial^2_{x_ix_j}V^{\varepsilon,R_1}_N\}$, $1\leq i,j\leq N$, $\varphi$ has polynomial growth in $x$:
\begin{align*}
  |\varphi(t,x)|\leq \breve{C}(1+|x|)^7,\quad(t,x)\in[0,T)\times\mathbb R^N,
\end{align*}
where the constant $\breve C$ depends only on $f,\ L,\ U,\ \sigma,\ \varepsilon$ and is independent of $R_1$\footnote{According to Theorem 4.7.2 and Theorem 4.7.4 in \cite{Krylov1980} for $L$, $U$ with growth rate $(1+|x|^m)$, the estimates on the second order derivatives are of growth rate $(1+|x|^{3m+1})$. Here in our case, since $L$ and $U$ grow at most quadratically, we have $m=2$.}

Similar to \eqref{regularity}, we may view the solution of \eqref{glb-1} as the solution of the constant coefficients PDE
\begin{align}\label{regularity-1}
    \partial_tV^{\varepsilon,R_1}_N(t,x)+\frac{\sigma^2}2\sum_{i,j=1}^N\partial^2_{x_ix_j}V^{\varepsilon,R_1}_N(t,x)+\frac{\varepsilon^2}2\sum_{i=1}^N\partial^2_{x_ix_i}V^{\varepsilon,R_1}_N(t,x)=g^{R_1}(t,x),
  \end{align}
  where
  \begin{align*}
  g^{R_1}(t,x):=-\tilde H^{R_1}_N(t,x,\nabla_xV^{\varepsilon,R_1}_N).
\end{align*}
In view of Corollary 4.7.8 in \cite{Krylov1980} as well as Lemma \ref{1st-deri-2-0} and Lemma \ref{coeff.}, $g^{R_1}(t,x)$ is locally Lipschitz continuous with respect to $x$ with Lipschitz constant independent of $R_1$ while $g^{R_1}(t,x)$ is locally $\frac\gamma2-$H\"{o}lder continuous ($0<\gamma<1$) with respect to $t$ with H\"{o}lder constant independent of $R_1$. It then follows that $\partial_tV^{\varepsilon,R_1}_N(t,x)$ and $\partial^2_{x_ix_j}V^{\varepsilon,R_1}_N(t,x)$, $1\leq i,j\leq N$ are locally H\"{o}lder continuous in $(t,x)$ with H\"{o}lder constant independent of $R_1$. According to Arzel\`{a}--Ascoli Theorem, we may pass $R_1$ to infinity in \eqref{glb-1} and obtain the limit of $V^{\varepsilon,R_1}_N$ as the solution $V^\varepsilon_N\in C^{1+\frac\gamma2,2+\gamma}_{loc}\left([0,T)\times\mathbb R^N\right)\cap C\left([0,T]\times\mathbb R^N\right)$ of \eqref{glb-3}. We remark that because of the uniqueness of solutions to this last problem, there is no need to consider sub-sequential limits in the Arzel\`a--Ascoli theorem. Moreover, we have \eqref{1st-deri-scaling-2} and \eqref{VN-growth-1} for $\varphi\in\{V^\varepsilon_N, \partial_tV^{\varepsilon}_N, \partial_{x_i}V^\varepsilon_N, \partial^2_{x_ix_j}V^\varepsilon_N\}$, $1\leq i,j\leq N$.

In order to show higher regularity of $V^\varepsilon_N$, we may take $\partial_{x_i}$, $(1\leq i\leq N)$ in \eqref{glb-3} and obtain the PDE satisfies by $\partial_{x_i}V^\varepsilon_N$. Notice that $\partial_{x_i}\big(\tilde H_N(x,\nabla_xV^\varepsilon_N(t,x))\big)\in C^{\frac\gamma2,\gamma}_{loc}\left([0,T)\times\mathbb R^N\right)$, then it follows that $\partial_{x_i}V^\varepsilon_N\in C^{1+\frac\gamma2,2+\gamma}_{loc}\left([0,T)\times\mathbb R^N\right)\cap C\left([0,T]\times\mathbb R^N\right)$. Thanks to Lemma \ref{coeff.} we may let $R_1$ go to infinity in \eqref{hamilton-001} to obtain that $\partial^2_{x_ix_j}\big(\tilde H_N(x,\nabla_xV^\varepsilon_N(t,x))\big)$ is bounded and $\partial^2_{x_ix_j}\big(\tilde H_N(x,\nabla_xV^\varepsilon_N(t,x))\big)\in C^{1+\frac\gamma2,2+\gamma}_{loc}\left([0,T)\times\mathbb R^N\right)$. Thus we can further take  $\partial^2_{x_ix_j}$, $(1\leq i,j\leq N)$ in \eqref{glb-3} and repeat the previous procedure once more to show that $\partial^2_{x_ix_j}V^\varepsilon_N\in C^{1+\frac\gamma2,2+\gamma}_{loc}\left([0,T)\times\mathbb R^N\right)\cap C\left([0,T]\times\mathbb R^N\right)$.
\end{proof}
{ We note here that \eqref{1st-deri-scaling-2} is equivalent to $\big(x,\nabla_xV^\varepsilon_N(t,x)\big)\in\mathcal A_{N,\tilde C_2,\tilde C_2},\ (t,x)\in[0,T]\times\mathbb R^N.$} 
Now we make the following assumption on convexity of the data and set out for the global in time estimates.

\noindent{\bf Hypothesis (R1).} Suppose \noindent{\bf Hypothesis (R)} parameters $\Gamma_1,\ \Gamma_2$  and the following:
\begin{enumerate}
\item $U$ is convex;
\item
{$\tilde H_N$ defined in \eqref{H-N-R} is convex in the $x$-variable, and satisfies in particular 
\begin{align}\label{convexity-assumption}
 \nabla^2_{xx}\tilde H_N\left(t,x,p\right)\geq0,\ (t,x,p)\in[0,T]\times\mathbb R^N\times\mathbb R^N,
\end{align}
in the sense of positive definite matrices.
}
\end{enumerate}

\begin{remark}
  { As we will see in the analysis below, the second assumption in {\bf Hypothesis (R1)} on the convexity of the Hamiltonian in the $x$-variable can be relaxed as eventually it will be used only restricted to the set $\left\{(t,x,p)\in[0,T]\times\mathbb R^N\times\mathbb R^N: p = \nabla_xV^\varepsilon_N(t,x)\right\}.$
  Such relaxation will also be shown to be particularly meaningful in the case study of Section \ref{adaptation}. The assumption} on the convexity of the Hamiltonian is quite common in mean field control problems. { In general, infinite dimensional HJB equations for mean field control are degenerate despite the presence of non-degenerate noise, and convexity conditions are used to establish the a priori estimates for global classical solutions. In our context, the convexity condition is used to established uniform in $N$ estimates in Lemma \ref{prior-bound}--Proposition \ref{dY/dx-1} below.} In our model, since the control is centralized and the dynamics of the particles { are} more complicated, this assumption could no longer be guaranteed in a simple way. According to direct calculation, with the notation $\mu=\frac1N\sum_{j=1}^N\delta_{x_j}$, we find
  \begin{align*}
    \partial^2_{x_kx_l}\tilde H_N(t,x,p)&=\delta_{kl}\frac1NL''(x_k)+\partial^2_{\theta x}f(t,\theta^*,x_k,\mu)p_k\partial_{x_l}\theta^*+\delta_{kl}\partial^2_{xx}f(t,\theta^*,x_k,\mu)p_k+\frac1N\partial^2_{x\mu}f(t,\theta^*,x_k,\mu,x_l)p_k\notag\\
    &\quad+\frac1N\partial^2_{x\mu}f(t,\theta^*,x_l,\mu,x_k)p_l+\frac1N\sum_{i=1}^N\partial^2_{\mu\theta}f(t,\theta^*,x_i,\mu,x_k)p_i\partial_{x_l}\theta^*\notag\\
    &\quad+\delta_{kl}\frac1N\sum_{i=1}^N\partial^2_{\mu\tilde x}f(t,\theta^*,x_i,\mu,x_k)p_i+\frac1{N^2}\sum_{i=1}^N\partial^2_{\mu\mu}f(t,\theta^*,x_i,\mu,x_k,x_l)p_i.
  \end{align*}
  We can see from the above that one possible way to ensure the convexity of $\tilde H_N(t,x,p)$ in $x$ is to assume an affine structure on $f$. 
  
  Indeed, set the parameters in \eqref{sample-dyn} and \eqref{obj-func-2} as follows
\begin{align*}
{f\left(t,\theta,x,\mu\right)=\theta+x+\int_{\mathbb R}y\mu(dy)},\ { L''(x)\ge 0,\ U''(x)\geq0},\ \forall x\in\mathbb R,\ \sigma=\lambda=1.
\end{align*}
Then {\bf Hypothesis (R1)} can be easily verified. { A more sophisticated example that fulfils {\bf Hypothesis (R1)} will be analyzed in Section \ref{adaptation}, where $f$ is nonlinear and $L,U$ are increasing.

We also note here that the convexity condition \eqref{convexity-assumption} is the analogy of displacement monotonicity in mean field games. Inspired by the literature on mean field games, there are at least two possible ways to relax the requirement on convexity. One possible way is to consider linear quadratic framework, see \cite{MinLi2023} where mean field games of controls with non-monotonic data is studied. The other possible way is to introduce the $\lambda$-monotonicity condition or involve canonical transformations, see \cite{Mou2022MFGC, BanMes:25-FMS, Liu26, BanMes:25-BLMS}.}
\end{remark}

Similar to the last subsection, the key estimate in this subsection is from the BSDE of Riccati type \eqref{riccati} below. The following lemma is devoted to estimating the terms appearing in \eqref{riccati}.
\begin{lemma}\label{coeff.-1}
  { Suppose {\bf Hypothesis (R1)} with certain parameters $\Gamma_1,\ \Gamma_2$ in \eqref{hypo-regu-1}, then for $\tilde H_N$ in \eqref{H-N-R} and $(t,x)\in[0,T]\times\mathbb R^N$,
  \begin{align}\label{hamilton-002}
   &\big|\partial_{x_i}\tilde H_N\big(t,x,\nabla_xV^\varepsilon_N(t,x)\big)\big|,\big|\partial^2_{x_ip_j}\tilde H_N\big(t,x,\nabla_xV^\varepsilon_N(t,x)\big)\big|\leq\tilde C_3N^{-1},\notag\\
   &\big|\partial^2_{x_ix_j}\tilde H_N\big(t,x,\nabla_xV^\varepsilon_N(t,x)\big)\big|\leq\tilde C_3N^{-1}(\delta_{ij}+N^{-1}),\notag\\
    &\big|\partial_{p_i}\tilde H_N\big(t,x,\nabla_xV^\varepsilon_N(t,x)\big)\big|,\big|\partial^2_{p_ip_j}\tilde H_N\big(t,x,\nabla_xV^\varepsilon_N(t,x)\big)\big|\leq\tilde C_3,\quad1\leq i,j\leq N.
  \end{align}
  As a result, there exists a constant $\tilde C_5=\tilde C_5(f,\lambda^{-\frac12},T,L,U,(\lambda-\lambda_0)^{-1})$ such that
  \begin{align*}
    0\leq \nabla^2_x\tilde H_N\big(t,x,\nabla_xV^\varepsilon_N(t,x)\big)\leq\frac{\tilde C_5}N I_N.
  \end{align*}}
\end{lemma}
\begin{proof}
   In view of Lemma \ref{coeff.}, the constant $\tilde C_3$ in \eqref{hamilton-001} is independent of $R_1$. Therefore we can let $R_1$ go to infinity and obtain \eqref{hamilton-002} according to definitions in \eqref{glb-H} and \eqref{H-N-R}. Furthermore, in view of the second inequality in \eqref{hamilton-002}, we can deduce the existence of $\tilde C_5$ such that for any { $\xi\in\mathbb R^N$, $(t,x)\in[0,T]\times\mathbb R^N$,
  \begin{align*}
    \sum_{i,j=1}^N\partial^2_{x_ix_j}\tilde H_N\big(t,x,\nabla_xV^\varepsilon_N(t,x)\big)\xi_i\xi_j\leq\frac12\sum_{i,j=1}^N\partial^2_{x_ix_j}\tilde H_N\big(t,x,\nabla_xV^\varepsilon_N(t,x)\big)(\xi^2_i+\xi^2_j)\leq\frac{\tilde C_5}N|\xi|^2.
  \end{align*}}
  Combining the above with {\bf Hypothesis (R1)} we have the last inequality.
\end{proof}
With the preparation above, if we further assume that $\partial^2_{x_ix_j}V^\varepsilon_N(t,x)$ $(1\leq i,j\leq N)$ are bounded, we would then obtain a refined estimate on the bound of $\partial^2_{x_ix_j}V^\varepsilon_N(t,x)$ $(1\leq i,j\leq N)$ in \eqref{global-eigen.}. This is obtained via the BSDE of Riccati type in \eqref{riccati} below where the convexity assumption plays a key role.

\begin{lemma}\label{prior-bound}
  Let the constants $\Gamma^0_1,\ \Gamma^0_2$ be such that $\big(x,\nabla_xV^\varepsilon_N(t,x)\big)\in\mathcal A_{N,\Gamma^0_1,\Gamma^0_2}$ for all $\varepsilon,R_1>0,\ (t,x)\in[0,T]\times\mathbb R^N$. Suppose that {\bf Hypothesis (R1)} holds with $(\Gamma_1,\Gamma_2)=(\Gamma^0_1,\Gamma^0_2)$ in \eqref{hypo-regu-1}, and that there exist positive constants $\delta$ and $\breve C$ {(which could depend on $N$ and $\varepsilon$)} such that for $(t,x)\in[T-\delta,T]\times\mathbb R^N$, it holds that $\partial^2_{x_ix_j}V^\varepsilon_N(t,x)$ $(1\leq i,j\leq N)$ are bounded by constant $\breve C$. Then there exists a constant $\tilde C_6=\tilde C_6(f,\lambda^{-\frac12},T,L,U,(\lambda-\lambda_0)^{-1})$ (independent of $\breve C$ and $\delta$) such that for $\xi\in\mathbb R^N$ and $(t,x)\in[T-\delta,T]\times\mathbb R^N$,
  \begin{align}\label{global-eigen.}
    0\leq\sum_{i,j=1}^NV^{\varepsilon,ij}_N(t,x)\xi_i\xi_j\leq\frac{\tilde C_6}N|\xi|^2,
  \end{align}
  {In particular, $\partial^2_{x_ix_j}V^\varepsilon_N(t,x)$ $(1\leq i,j\leq N)$ are bounded by $\frac{\tilde C_6}N$ for $(t,x)\in[T-\delta,T]\times\mathbb R^N$.}
\end{lemma}
\begin{proof}
  Without the loss of generality, we show \eqref{global-eigen.} when $t=T-\delta$. For $x=(x_1,\ldots,x_N)\in\mathbb R^N$, consider
\begin{align}\label{SDE-X-i-1}
  dX^i(t)=\partial_{p_i}H(t,X(t),\nabla_xV^\varepsilon_N(t,X(t)))dt+\sigma dW^i(t)+\varepsilon dW^0(t), X^i_{T-\delta}=x_i,
\end{align}
as well as 
\begin{align}\label{FBSDE-Y}
Y^{kl}(t)=V^{\varepsilon,kl}_N(t,X(t)),\quad(t,x)\in[T-\delta,T]\times\mathbb R^N.
\end{align}
According to \eqref{1st-deri-scaling-2} and {\bf Hypothesis (R1)}, it is easy to show that
\begin{align}\label{linear-growth}
  \big|\partial_{p_i}H(t,X(t),\nabla_xV^\varepsilon_N(t,X(t)))\big|\leq C(1+|X(t)|).
\end{align}
for some constant $C$. Hence \eqref{SDE-X-i-1} admits a weak solution satisfying
\begin{align*}
  \mathbb E\bigg[\max_{0\leq t\leq T}|X(t)|^\kappa\bigg]\leq C(1+|x|^\kappa),\quad\forall \kappa\geq1.
\end{align*}
Moreover, since $t\in[T-\delta,T]$, we have by assumption that
\begin{align*}
  |Y^{kl}(t)|\leq\breve C,\quad1\leq k,l\leq N.
\end{align*}

Given the estimates above and Proposition \ref{1st-deri-2-1}, we may differentiate \eqref{glb-3} with respect to $x_i$, $x_j$ ($1\leq i,j\leq N$) and obtain an analog of \eqref{glb-2}, then we can deduce from the {assumption on the} boundedness of the matrix process $Y(t)$ that it satisfies the Riccati type equation
\begin{align}\label{riccati}
  Y(t)&=\mathbb E_t\bigg[\frac1N\tilde U(T)+\int_t^T\big[\nabla^2_{xx}\tilde H_N\big(X(s),\nabla_xV^\varepsilon_N(s,X(s))\big)+Y(s)\nabla^2_{xp}\tilde H_N\big(s,X(s),\nabla_xV^\varepsilon_N(s,X(s))\big)\notag\\
  &\quad+\nabla^2_{px}\tilde H_N\big(s,X(s),\nabla_xV^\varepsilon_N(s,X(s))\big)Y(s)+Y(s)\nabla^2_{pp}\tilde H_N\big(X(s),\nabla_xV^\varepsilon_N(s,X(s))\big)Y(s)\big]ds\bigg].
\end{align}
Here we recall that the first term $\frac1N\tilde U(T)$ on the right hand side is defined similarly to that in \eqref{tilde-UT}. Define $\Phi(s)$ satisfying
\begin{align}\label{riccati-0}
  &\Phi(t)=I_N-\int_{T-\delta}^t\Phi(s)\bigg[\frac12Y(s)\nabla^2_{pp}\tilde H_{N}(s,X(s),\nabla_xV^\varepsilon_N(s,X(s)))+\nabla^2_{px}\tilde H_N\big(s,X(s),\nabla_xV^\varepsilon_N(s,X(s))\big)\bigg]ds,\notag\\
  &t\in[T-\delta,T].
\end{align}
Note here that $\Phi(t)$, $t\in[T-\delta,T]$ is bounded because $Y(t),\ \nabla^2_{pp}\tilde H_{N}$ and $\nabla^2_{px}\tilde H_N$ in the right hand side above are bounded. According to the above and \eqref{riccati}, we may write the dynamics of $Y(t)$, $\Phi(t)$ as
\begin{align*}
 dY(t)&= {\bf -}\bigg[\nabla^2_{xx}\tilde H_N\big(X(t),\nabla_xV^\varepsilon_N(t,X(t))\big)+Y(t)\nabla^2_{xp}\tilde H_N\big(t,X(t),\nabla_xV^\varepsilon_N(t,X(t))\big)\notag\\
  +&\nabla^2_{px}\tilde H_N\big(t,X(t),\nabla_xV^\varepsilon_N(t,X(t))\big)Y(t)+Y(t)\nabla^2_{pp}\tilde H_N\big(X(t),\nabla_xV^\varepsilon_N(t,X(t))\big)Y(t)\bigg]dt+\sum_{i=0}^NZ^i(t)dW^i(t),\notag\\
  d\Phi(t)&=-\Phi(t)\bigg[\frac12Y(t)\nabla^2_{pp}\tilde H_{N}(t,X(t),\nabla_xV^\varepsilon_N(t,X(t)))+\nabla^2_{px}\tilde H_N\big(t,X(t),\nabla_xV^\varepsilon_N(t,X(t))\big)\bigg]dt,
\end{align*}
where $\int_{T-\delta}^tZ^i(s)dW^i(s)$ $(0\leq i\leq N)$ are BMO martingales. Then It\^{o}'s formula gives
\begin{align*}
 d(\Phi(t)Y(t)\Phi(t)^\top)&=(d\Phi(t))Y(t)\Phi(t)^\top+\Phi(t)(dY(t))\Phi(t)^\top+\Phi(t)Y(t)(d\Phi(t)^\top)\notag\\
 &={{\bf -}}\Phi(t)\nabla^2_{xx}\tilde H_{N}\big(t,X(t),\nabla_xV^\varepsilon_N(t,X(t))\big)\Phi^\top(t)dt+\sum_{i=0}^N\Phi(t)Z^i(t)\Phi^\top(t)dW^i(t).
\end{align*}
Since $\Phi(t)$ is bounded, we may present the above as
\begin{align}\label{riccati-1}
  \Phi(t)Y(t)\Phi^\top(t)=\mathbb E(t)\bigg[\frac1N\Phi(t)\tilde U(T)\Phi^\top(t)+\int_t^T\Phi(s)\nabla^2_{xx}\tilde H_{N}\big(s,X(s),\nabla_xV^\varepsilon_N(s,X(s))\big)\Phi^\top(s)ds\bigg].
\end{align}
According to {\bf Hypothesis (R1)}, we have
\begin{align*}
 \tilde U(T)\geq0,\quad\nabla^2_{xx}\tilde H_{N}\big(s,X(s),\nabla_xV^\varepsilon_N(s,X(s))\big)\geq0.
\end{align*}
Here the ordering relation $\geq$ is used in the sense of positive semi-definite matrices. Hence
\begin{align*}
 \frac1N\Phi(t)\tilde U(T)\Phi^\top(t)\geq0,\quad\Phi(s)\nabla^2_{xx}\tilde H_{N}\big(s,X(s),\nabla_xV^\varepsilon_N(s,X(s))\big)\Phi^\top(s)\geq0,\quad s\in[t,T].
\end{align*}
Moreover,
\begin{align}\label{riccati-2}
 \mathbb E(t)\bigg[\frac1N\Phi_T\tilde U(T)\Phi^\top_T+\int_t^T\Phi(s)\nabla^2_{xx}\tilde H_{N}\big(s,X(s),\nabla_xV^\varepsilon_N(s,X(s))\big)\Phi^\top(s)ds\bigg]\geq0.
\end{align}
In other words, the right hand side of \eqref{riccati-1} is a (random) positive semi-definite matrix. Now we may take $t=T-\delta$ in \eqref{riccati-1} and combine \eqref{riccati-0}, \eqref{riccati-2} to obtain $Y_{T-\delta}\geq0$. In view of \eqref{FBSDE-Y}, we have $\nabla^2_{xx}V^\varepsilon_N\geq0$ and hence
\begin{align*}
 \sum_{i,j=1}^NV^{\varepsilon,ij}_N(T-\delta,x)\xi_i\xi_j\geq0.
\end{align*}
One the other hand, according to {\bf Hypothesis (R1)} and \eqref{H-N-R}, we have
\begin{align*}
  \nabla^2_{xx}\tilde H_N\big(s,X(s),\nabla_xV^\varepsilon_N(s,X(s))\big),\ -\nabla^2_{pp}\tilde H_N\big(s,X(s),\nabla_xV^\varepsilon_N(s,X(s))\big)\geq0.
\end{align*}
Hence for any $\alpha\in\mathbb R^N$ satisfying $|\alpha|=1$,
\begin{align*}
  0&\leq\alpha^\top Y(t)\alpha\leq\mathbb E_t\bigg[\frac1N\alpha^\top\tilde U(T)\alpha+\int_t^T\Big[\alpha^\top Y(s)\nabla^2_{xp}\tilde H_N\big(s,X(s),\nabla_xV^\varepsilon_N(s,X(s))\big)\alpha\notag\\
  &\quad+\alpha^\top\nabla^2_{px}\tilde H_N\big(s,X(s),\nabla_xV^\varepsilon_N(s,X(s))\big)Y(s)\alpha+\alpha^\top\nabla^2_{xx}\tilde H_N\big(s,X(s),\nabla_xV^\varepsilon_N(s,X(s))\big)\alpha\Big] ds\bigg].
\end{align*}
Moreover, in view of Lemma \ref{coeff.-1}  and $Y(s)\geq0$, we have
\begin{align*}
 &\quad\alpha^\top Y(s)\nabla^2_{xp}\tilde H_N\big(s,X(s),\nabla_xV^\varepsilon_N(s,X(s))\big)\alpha\leq\big|Y(s)\alpha\big|\cdot\big|\nabla^2_{xp}\tilde H_N\big(s,X(s),\nabla_xV^\varepsilon_N(s,X(s))\big)\alpha\big|\notag\\
 &\leq\tilde C_3\big|Y(s)\alpha\big|\cdot\big|\alpha\big|\leq\tilde C_3\sup_{|\beta|=1}\beta^\top Y(s)\beta,
\end{align*}
as well as
\begin{align*}
  \alpha^\top\nabla^2_{xx}\tilde H_N\big(s,X(s),\nabla_xV^\varepsilon_N(s,X(s))\big)\alpha\leq\frac{\tilde C_5}N|\alpha|^2=\frac{\tilde C_5}N,\quad\frac1N\alpha^\top\tilde U(T)\alpha\leq\frac{C^U_{20}}N,
\end{align*}
where we recall that $C^U_{20}$ is from \eqref{assumption-L-U}. Hence
\begin{align*}
  \sup_{|\beta|=1}\beta^\top Y(t)\beta\leq\frac{C^U_{20}}N+\frac{\tilde C_5T}N+2\tilde C_3\mathbb E_t\bigg[\int_t^T\bigg(\sup_{|\beta|=1}\beta^\top Y(s)\beta\bigg)ds\bigg].
\end{align*}
Therefore we may deduce the existence of $\tilde C_6=\tilde C_6(f,\lambda^{-\frac12},T,L,U,(\lambda-\lambda_0)^{-1})$ such that
\begin{align*}
\mathbb E\bigg[ \sup_{|\beta|=1}\beta^\top Y(t)\beta\bigg]\leq\frac{\tilde C_6}N,\quad t\in[T-\delta,T].
\end{align*}
The inequality above implies that
\begin{align*}
  \sum_{i,j=1}^NV^{\varepsilon,ij}_N(t,x)\xi_i\xi_j\leq\frac{\tilde C_6}N|\xi|^2,\quad\xi\in\mathbb R^N,\quad(t,x)\in[T-\delta,T]\times\mathbb R^N,
\end{align*}
and we have completed the proof.
\end{proof}
\begin{remark}\label{explanation}
\begin{enumerate}
\item  To see why we confine ourselves to the case where $\partial^2_{x_ix_j}V^\varepsilon_N(t,x)$ $(1\leq i,j\leq N)$ are bounded, one might turn to the definition of the matrix valued process $\Phi_t$. If we do not assume that $Y(t)$ is bounded, then we cannot ensure the integrability of $\Phi_t$. Without the integrability of $\Phi_t$, we can not do the calculations in \eqref{riccati} and below, since they all involve taking conditional expectation.
 \item For now, in this Lemma \ref{prior-bound}, the existence of the constants $\delta$ and $\breve C$ is merely an assumption. But we know from Proposition \ref{dY/dx-1-0} and Proposition \ref{1st-deri-2-1} that $\delta$ indeed exists and is at least $\tilde c$, so does $\breve C$. In the next Proposition \ref{dY/dx-1}, we will use the refined estimate \eqref{global-eigen.} to show that $\delta=T$. Moreover, showing that $\delta=T$ will then in turn gives us the refined estimate \eqref{global-eigen.} on $[0,T]$.

\end{enumerate}
\end{remark}

We finish this section with the next proposition where the extra assumption on boundedness in Lemma  \ref{prior-bound} is removed. The main idea is to take advantage of the refined estimate in \eqref{global-eigen.} while utilizing a suitable `continuity' method.
\begin{proposition}\label{dY/dx-1}
  { Let constants $\Gamma^0_1,\ \Gamma^0_2>0$ be such that $\big(x,\nabla_xV^\varepsilon_N(t,x)\big)\in\mathcal A_{N,\Gamma^0_1,\Gamma^0_2}$ for all $\varepsilon>0,\ (t,x)\in[0,T]\times\mathbb R^N$. Suppose that {\bf Hypothesis (R1)} holds with $(\Gamma_1,\Gamma_2)=(\Gamma^0_1,\Gamma^0_2)$}. There exists a constant $\tilde C_6=\tilde C_6(f,\lambda^{-\frac12},T,L,U,(\lambda-\lambda_0)^{-1})$ such that for $1\leq i,j\leq N$,
  \begin{align*}
    0\leq\sum_{i,j=1}^NV^{\varepsilon,ij}_N(t,x)\xi_i\xi_j\leq\frac{\tilde C_6}N|\xi|^2,\quad\xi\in\mathbb R^N,\ (t,x)\in[0,T]\times\mathbb R^N.
  \end{align*}
\end{proposition}
\begin{proof}
{ Let $\gamma_0>0$ be sufficiently small such that $\lambda>(1+\gamma_0)\lambda_0$ for $\lambda_0$ from \eqref{hypo-regu-2}. By definition, {\bf Hypothesis (R1)} also holds with $(\Gamma_1,\Gamma_2)=\big((1+\gamma_0)\Gamma^0_1,(1+\gamma_0)\Gamma^0_2\big)$.} We have from Proposition \ref{dY/dx-1-0} that, for $0\leq T-t\leq\tilde c$ and $x\in\mathbb R^N$, $\partial^2_{x_ix_j}V^{\varepsilon,R_1}_N(t,x)$, $1\leq i,j\leq N$ are uniformly bounded by $\tilde C_4+1$  independent of $R_1$. In view of the convergence of $\partial^2_{x_ix_j}V^{\varepsilon,R_1}_N(t,x)$ to $\partial^2_{x_ix_j}V^\varepsilon_N(t,x)$, as $R_{1}\to+\infty$ in Proposition \ref{1st-deri-2-1}, we obtain that $\partial^2_{x_ix_j}V^\varepsilon_N(t,x)$, $1\leq i,j\leq N$ are bounded on $(t,x)\in[T-\tilde c,T]\times\mathbb R^N$. Therefore we have both \eqref{1st-deri-scaling-2} and \eqref{global-eigen.} on $(t,x)\in[T-\tilde c,T]\times\mathbb R^N$ from Proposition \ref{1st-deri-2-1} and Lemma \ref{prior-bound}. 

Next we replace $\frac1N\sum_{i=1}^NU_{R_1}(x_i)$ { with $V^\varepsilon_N\big(T-\tilde c,\rho_{N,R}(x)\big)$} in \eqref{glb-1}, \eqref{glb-2} (we impose some specific properties on $\rho$ below) and consider the following coupled PDE system on a time interval $(T-\tilde c - c,T-\tilde c)$, where $c>0$ is a small number which will be specified later (written for $\hat V^{\varepsilon,R_1}_N$)
\begin{align}\label{glb-1-1}
\left\{\begin{aligned}&\partial_t\hat V^{\varepsilon,R_1}_N+\frac{\sigma^2}2\sum_{i,j=1}^N\partial^2_{x_ix_j}\hat V^{\varepsilon,R_1}_N+\frac{\varepsilon^2}2\sum_{i=1}^N\partial^2_{x_ix_i}\hat V^{\varepsilon,R_1}_N+\tilde H^{R_1}_N(t,x,\nabla_x\hat V^{\varepsilon,R_1}_N)=0,\\
&\hat V^{\varepsilon,R_1}_N(T-\tilde c,x_1,\ldots,x_N)={ V^\varepsilon_N\big(T-\tilde c,\rho_{N,R}(x)\big)},
\end{aligned}\right.
\end{align}
as well as
\begin{align}\label{glb-2-1}
\left\{\begin{aligned}&\partial_t\hat V^{\varepsilon,R_1,kl}_N+\frac{\sigma^2}2\sum_{i,j=1}^N\partial^2_{x_ix_j}\hat V^{\varepsilon,kl}_N+\frac{\varepsilon^2}2\sum_{i=1}^N\partial^2_{x_ix_i}\hat V^{\varepsilon,kl}_N+\partial^2_{x_kx_l}\tilde H^{R_1}_N(x,\nabla_x\hat V^{\varepsilon,R_1}_N)\\
&+\sum_{i=1}^N\partial_{p_i}\tilde H^{R_1}_N(x,\nabla_x\hat V^{\varepsilon,R_1}_N)\partial_{x_i}\hat V^{\varepsilon,R_1,kl}_N+\sum_{i,j=1}^N\partial^2_{p_ip_j}\tilde H^{R_1}_N(x,\nabla_x\hat V^{\varepsilon,R_1}_N)\hat V^{\varepsilon,R_1,ki}_N\hat V^{\varepsilon,R_1,jl}_N\\
& +\sum_{i=1}^N\partial^2_{x_lp_i}\tilde H^{R_1}_N (x,\nabla_x\hat V^{\varepsilon,R_1}_N)\hat V^{\varepsilon,R_1,ki}_N+ \sum_{i=1}^N\partial^2_{x_kp_i}\tilde H^{R_1}_N (x,\nabla_x\hat V^{\varepsilon,R_1}_N)\hat V^{\varepsilon,R_1,li}_N\\
&=0,\\
&\hat V^{\varepsilon,R_1,kl}_N(T-\tilde c,x)={ \partial^2_{x_kx_l}\big[V^\varepsilon_N\big(T-\tilde c,\rho_{N,R}(x)\big)\big]},\quad1\leq k,l\leq N.
\end{aligned}\right.
\end{align}
{ Here $\rho_{N,R}:\ \mathbb R^N\to\mathbb R^N$ is a truncation that is smooth and satisfies for $x=(x_1,\ldots,x_N)\in\mathbb R^N$,
\begin{align}\label{truncation}
 \rho_{N,R}(x)=\big(\rho_R(x_1),\ldots,\rho_R(x_N)\big),\quad0\leq\rho'_R\leq1,\quad\rho_R(y)\in\left\{\begin{aligned}&\{-R-1\},\quad y\in(-\infty,-R-2),\\
 &[-R-1,-R],\quad y\in[-R-2,R],\\
 &\{y\},\quad y\in[-R,R],\\
 &[R,R+1],\quad y\in[R,R+2],\\
 &\{R+1\},\quad y\in(R+2,+\infty).
 \end{aligned}\right.
\end{align}
In view of the definition of $\rho_{N,R}$, since $\big(x,\nabla_xV^\varepsilon_N(t,x)\big)\in\mathcal A_{N,\Gamma^0_1,\Gamma^0_2}$, $(t,x)\in[0,T]\times\mathbb R^N$, 
\begin{align*}
 G_N\big(x,\Gamma^0_1\big)\leq{ \partial_{x_k}\big[V^\varepsilon_N\big(T-\tilde c,\rho_{N,R}(x)\big)\big]}\leq G_N\big(x,\Gamma^0_2\big).
\end{align*}
Following a similar contraction argument to that in Lemma \ref{1st-deri} and Lemma \ref{forward-scaling-eqn}, we obtain that for sufficiently small $\gamma>0$, there exists time duraion $\hat c$ depending on $\gamma$ such that,\ \\
\begin{align*}
 &G_N\big(x,(1+\gamma)\Gamma^0_1\big)\leq Y^i_N(s)=\partial_{x_i}\hat V^{\varepsilon,R_1}_N(s,x)\leq G_N\big(x,(1+\gamma)\Gamma^0_2\big),\ s\in[T-\tilde c-\hat c,T-\tilde c],\\
 &1\leq i\leq N,\quad x\in\mathbb R^N.
\end{align*}
Let $\gamma<\gamma_0$. According to the above inequality and our assumption that {\bf Hypothesis (R1)} holds with $(\Gamma_1,\Gamma_2)=(\Gamma^0_1,\Gamma^0_2)$, we may apply Lemma \ref{coeff.} and show that the estimates in \eqref{hamilton-001} holds when $(t,x,p)$ takes the following value:
\begin{align*}
 (t,x,p)=\big(s,x,\nabla_x\hat V^{\varepsilon,R_1}_N(s,x)\big),\quad(s,x)\in[T-\tilde c-\hat c,T-\tilde c]\times\mathbb R^N.
\end{align*}
and the bound therein is replaced with $\tilde C_3+\hat\gamma$ for some $\hat\gamma>0$.}

Next, we may use a contraction method similar to the one in the proof of Proposition \ref{dY/dx-1-0} to show the existence of $c>0$, depending on $\tilde C_2(C^L_{10}+C^U_{10})$ in \eqref{1st-deri-scaling-2}, $\tilde C_6$ in \eqref{global-eigen.} { as well as $N,\ \gamma,\ \hat\gamma$}, such that the solution $\hat V^{\varepsilon,R_1,kl}_N$ to \eqref{glb-2-1} is unique and bounded on $(t,x)\in[T-\tilde c-c,T-\tilde c]\times\mathbb R^N$ uniformly in $R_1$. We may also argue similarly to Proposition \ref{1st-deri-2-1} to obtain that
\begin{align*}
  \lim_{R_1\to+\infty}\partial^2_{x_kx_l}\hat V^{\varepsilon,R_1,kl}_N(t,x)=\partial^2_{x_kx_l}V^{\varepsilon,kl}_N(t,x),
\end{align*}
where $(t,x)\in[T-\tilde c-c,T-\tilde c]\times\mathbb R^N,\ 1\leq k,l\leq N$. In particular, we have shown that $\partial^2_{x_ix_j}V^\varepsilon_N(t,x)$, $1\leq i,j\leq N$ are bounded on $t\in[T-\tilde c-c,T]$. Then Proposition \ref{1st-deri-2-1} and Lemma \ref{prior-bound} again yield both \eqref{1st-deri-scaling-2} and \eqref{global-eigen.} on $(t,x)\in[T-\tilde c-c,T-\tilde c]\times\mathbb R^N$. 

It is important to notice that  $V^\varepsilon_N\big(T-\tilde c,\rho_{N,R}(x)\big)$, as the terminal condition of \eqref{glb-2-1}, is only used to show the boundedness of $\partial^2_{x_ix_j}V^\varepsilon_N(t,x)$ but not \eqref{global-eigen.}. We are relying the convexity of the final datum only after passing to the imit $R_1\to+\infty$.

Now we can replace $U_{R_1}(x)$ with $V^\varepsilon_N(T-\tilde c-c,x)$ in \eqref{glb-1}, \eqref{glb-2} and repeat the procedure above to prove \eqref{1st-deri-scaling-2} and \eqref{global-eigen.} on $(t,x)\in[T-\tilde c-2c,T-\tilde c-c]\times\mathbb R^N$. After finite such repetition we can show \eqref{global-eigen.} on $(t,x)\in[0,T]\times\mathbb R^N$.
\end{proof}

\subsection{Convergence of auxiliary problems}\label{Convergence of auxiliary problems}
In this section, we study the original problem associated to the HJB equation \eqref{HJB-V-N}. Thanks to the uniform estimates in the previous sections, we may obtain the desired solution by extracting a convergent subsequence from the families $(V^{\varepsilon,R_1}_N)_{{\varepsilon,R_{1}}}$ and $(V^{\varepsilon}_N)_{\varepsilon}$ which solve \eqref{glb-1} and \eqref{glb-3}, respectively. More importantly, the resulting limits inherit the estimates (uniform in $N$) satisfied by $V^{\varepsilon,R_1}_N$ and $V^{\varepsilon}_N$.

For short time, we have the following result on the well-posedness {of \eqref{HJB-V-N}} as well as the corresponding estimates.
\begin{theorem}\label{veri}
  Suppose that the assumptions of Lemma \ref{1st-deri-2-0-1} take place.
  Let $\tilde c>0$ given in Proposition \ref{dY/dx-1-0}. For $T<\tilde c$, the original HJB equation \eqref{HJB-V-N} admits a solution $V_N\in W^{1,2,\infty}_{loc}([0,T]\times\mathbb R^N)$, satisfying for $1\leq i,j\leq N$, $(t,x)\in[0,T]\times\mathbb R^N$,
\begin{align}\label{VN-scaling-1}
    \left|\partial_{x_i}V_N(t,x)\right|&\leq\frac{\tilde C_2(C^L_{11}+C^U_{11})}N\bigg(1+|x_i|^2+\frac1N\sum_{k=1}^N|x_k|^2\bigg)^\frac12+\frac{\tilde C_2(C^L_{10}+C^U_{10})}N,
\end{align}
and
\begin{align}\label{VN-scaling-2}
\left|\partial^2_{x_ix_j}V_N(t,x)\right|\leq\tilde C_4N^{-1}(\delta_{ij}+N^{-1})\quad{{\rm{a.e.}}}
\end{align}
  Moreover, such solution $V_N$ is characterized by the value function in \eqref{def-V-N-O} and thus it is unique. The unique optimal feedback function is
\begin{align}\label{def-theta-*}
\theta^*_N\big(t,x\big)&:=\lim_{R_1\to+\infty}\theta^{R_1}_N\big(t,x,\nabla_xV_N(t,x)\big)\notag\\
&\in\arg\min_{\theta\in\Theta}\bigg\{\frac\lambda2\big|\theta\big|^2+\sum_{i=1}^N f\bigg(t,\theta,x_i,\frac1N\sum_{j=1}^N\delta_{x_j}\bigg)\partial_{x_i}V_N(t,x)\bigg\},
\end{align}
where $\theta^{R_1}_N(t,p,q)$ is defined in \eqref{theta-*}.
\end{theorem}
\begin{proof}
Rewrite \eqref{glb-1} as follow
\begin{align*}
&-\frac{\varepsilon^2}2\sum_{i=1}^N\partial^2_{x_ix_i}V^{\varepsilon,R_1}_N=\partial_tV^{\varepsilon,R_1}_N+\frac{\sigma^2}2\sum_{i,j=1}^N\partial^2_{x_ix_j}V^{\varepsilon,R}_N\notag\\
  &\quad+\inf_{\theta\in\Theta}\bigg\{\frac\lambda2\big|\theta\big|^2+\sum_{i=1}^Nf\bigg(t,\theta,x_i,\frac1N\sum_{j=1}^N\delta_{x_j}\bigg)\partial_{x_i}V^{\varepsilon,R_1}_N\bigg\}+\frac1 N\sum_{i=1}^NL_{R_1}(x_i).
\end{align*}
In view of the uniform estimates in Lemma \ref{1st-deri-2-0} and Proposition \ref{dY/dx-1-0} 
let
\begin{align*}
\varepsilon\to0+,\ R_1\to+\infty,
\end{align*}
we immediately have the existence of $V_N\in W^{1,2,\infty}_{loc}([0,T)\times\mathbb R^N)$ such that on any compact subset of $[0,T)\times\mathbb R^N$, $V^{\varepsilon,R_1}_N$ and $\nabla_xV^{\varepsilon,R_1}_N$ converge (up to a subsequence) uniformly to $V_N$ and $DV_N$ whereas $\partial_tV^{\varepsilon,R_1}_N$, $\nabla^2_xV^{\varepsilon,R_1}_N$ converges weakly to $\partial_tV_N$, $\nabla^2_xV_N$. Moreover, $V_N$ also satisfies the corresponding local estimates \eqref{1st-deri-scaling-2}, \eqref{short-time-2nd-deri} of $V^{\varepsilon,R_1}_N$, hence \eqref{VN-scaling-1} and \eqref{VN-scaling-2} is valid.

According to \eqref{short-time-2nd-deri},
\begin{align*}
\left|\frac{\varepsilon^2}2\sum_{i=1}^N\partial^2_{x_ix_i}V^{\varepsilon,R_1}_N\right|\leq\frac{\varepsilon^2}2\tilde C_4.
\end{align*}
Sending $\varepsilon$ to $0+$, $R_1$ to $+\infty$ in \eqref{glb-1}, we get
\begin{align}\label{HJB-V-N-3}
  \partial_tV_N+\frac{\sigma^2}2\sum_{i,j=1}^N\partial^2_{x_ix_j}V_N+\inf_{\theta\in\Theta}\bigg\{\frac\lambda2\big|\theta\big|^2+\sum_{i=1}^Nf\bigg(t,\theta,x_i,\frac1N\sum_{j=1}^N\delta_{x_j}\bigg)\partial_{x_i}V_N\bigg\}+\frac1 N\sum_{i=1}^NL(x_i)=0,
\end{align}
in the distributional sense.

  To show the uniqueness, it suffices to establish the verification result that any solution $V_N\in W^{1,2,\infty}_{loc}([0,T]\times\mathbb R^N)$ satisfying \eqref{VN-scaling-1} and \eqref{VN-scaling-2} equals the value function in \eqref{def-V-N-O}. Consider any $\theta\in\mathcal U^{ad}$, as well as the corresponding $X^{\theta,i}(t)$ in \eqref{sample-dyn} and $X^{\varepsilon,\theta,i}(t)$ in \eqref{sample-dyn-1}. The generalized It\^{o}'s formula (see e.g. \cite{Krylov1980}) gives that, for any bounded domain $D\subset\mathbb R^N$, denoting by $\tau_D$ the corresponding exit time of $\mathbf X^{\varepsilon,\theta}_N(t)$,
\begin{align*}
&\quad V_N(T\wedge\tau_D,\mathbf X^{\varepsilon,\theta}_N(T\wedge\tau_D))\notag\\
    &=V_N(0,\mathbf X^{\varepsilon,\theta}_N(0))+\int_0^{T\wedge\tau_D}\mathcal L^\varepsilon_tV_N(t)dt+\sigma \sum_{i=1}^N\int_0^{T\wedge\tau_D}\partial_{x_i}V_N(t,\mathbf X^{\varepsilon,\theta}_N(t))dW^0(t)\notag\\
&\quad+\varepsilon\sum_{i=1}^N\int_0^{T\wedge\tau_D}\partial_{x_i}V_N(t,\mathbf X^{\varepsilon,\theta}_N(t))dW^i(t)
\end{align*}
For the ease of notation, we have adopted the notation
\begin{align*}
    &\mathcal L^\varepsilon_tV_N(t)=\partial_tV_N(t,\mathbf X^{\varepsilon,\theta}_N(t))+\frac{\sigma^2}2\sum_{i,j=1}^N\partial^2_{x_ix_j}V_N(t,\mathbf X^{\varepsilon,\theta}_N(t))+\frac{\varepsilon^2}2\sum_{i=1}^N\partial^2_{x_ix_i}V_N(t,\mathbf X^{\varepsilon,\theta}_N(t))\notag\\
    &\quad+\sum_{i=1}^Nf\bigg(t, \theta(t),X^{\varepsilon,\theta,i}_N(t),\frac{1}{N}\sum_{j=1}^N\rho(X^{\varepsilon,\theta,j}_N(t))\bigg)\partial_{x_i}V_N(t,\mathbf X^{\varepsilon,\theta}_N(t)).
\end{align*}
  According to Proposition \ref{dY/dx-1-0} as well as the convergence of $\nabla_xV^{\varepsilon,R_1}_N(t,x)$ to $\nabla_xV_N(t,x)$, $\nabla_xV_N(t,x)$ is continuous and uniformly bounded on $D$. Hence
\begin{align}\label{ineq-0}
    \mathbb E\left[V_N(T\wedge\tau_D,\mathbf X^{\varepsilon,\theta}_N(T\wedge\tau_D))\right]=V_N(0,\mathbf X^{\varepsilon,\theta}_N(0))+\mathbb E\bigg[\int_0^{T\wedge\tau_D}\mathcal L^\varepsilon_tV_N(t)dt\bigg].
\end{align}
According to Theorem 2.10.2 in \cite{Krylov1980} and \eqref{VN-scaling-2}, \eqref{HJB-V-N-3},
\begin{align}\label{ineq-1}
    \mathbb E\bigg[\int_0^{T\wedge\tau_D}\mathcal L^\varepsilon_tV_N(t)dt\bigg]&\leq\frac{\varepsilon^2}2\mathbb E\left[\int_0^{T\wedge\tau_D}\sum_{i=1}^N\partial^2_{x_ix_i}V_N(t,\mathbf X^{\varepsilon,\theta}_N(t))dt\right]\notag\\
    &\quad-\mathbb E\bigg[\frac\lambda2\int_0^{T\wedge\tau_D}\big|\theta(t)\big|^2dt+\frac1 N\sum_{i=1}^N\int_0^{T\wedge\tau_D}L(X^{\varepsilon,\theta,i}_N(t))dt\bigg]\notag\\
    &\leq\frac{\varepsilon^2}2\tilde C_4-\mathbb E\bigg[\frac\lambda2\int_0^{T\wedge\tau_D}\big|\theta(t)\big|^2dt+\frac1 N\sum_{i=1}^N\int_0^{T\wedge\tau_D}L(X^{\varepsilon,\theta,i}_N(t))dt\bigg].
\end{align}
Plug \eqref{ineq-1} into \eqref{ineq-0}, and let $D$ extend to $\mathbb R^N$, the monotone convergence theorem yields that
\begin{align}\label{ineq-3}
    \mathbb E\left[\frac1 N\sum_{i=1}^NU\big(X^{\varepsilon,\theta,i}_N(T)\big)+\frac1 N\sum_{i=1}^N\int_0^TL\big(X^{\varepsilon,\theta,i}_N(t)\big)dt+\frac\lambda2\int_0^T\big|\theta(t)\big|^2dt\right]\leq V_N\big(0,\mathbf X^{\varepsilon,\theta}_N(0)\big)+\frac{\varepsilon^2}2\tilde C_4.
\end{align}
Sending $\varepsilon$ to $0+$ and noticing the convergence of $X^{\varepsilon,\theta,i}$ to $X^{\theta,i}$, we have
\begin{align}\label{ineq-4}
    &\quad\mathbb E\left[\frac1 N\sum_{i=1}^NU\big(X^{\theta,i}_N(T)\big)+\frac1 N\sum_{i=1}^N\int_0^TL\big(X^{\theta,i}_N(t)\big)dt+\frac\lambda2\int_0^T\big|\theta(t)\big|^2dt\right]\leq V_N\big(0,\mathbf X^\theta_N(0)\big).
\end{align}
  On the other hand, consider the candidate optimal feedback control $\theta^*_N\big(t,x,\nabla_xV_N(t,x)\big)$. We first claim that the corresponding system
\begin{align}\label{optimal-sde}
    dX^{*,i}_N(t) = f\bigg(t,\theta^*_N(t,X^*_N),X^{*,i}_N(t),\frac{1}{N}\sum_{j=1}^N\delta_{X^{*,j}_N(t)}\bigg)dt + \sigma dW^0(t),\ i=1,\ldots,N.
\end{align}
  admits a unique solution for any initial data $x_1,\ldots,x_N$, $N\geq1$. In fact, it is easy to see from \eqref{def-theta-*} and Lemma \ref{theta-Lip} that $\theta^*_N(t,p,q)$ is locally Lipschitz continuous with respect to $(p,q)\in {\mathcal A}_N$. In the same time, $(x,\nabla_xV_N(t,x))\in {\mathcal A}_N$ and $V_N\in W^{1,2,\infty}_{loc}([0,T]\times\mathbb R^N)$. Therefore, after composition,
\begin{align*}
x\mapsto f\bigg(t,\theta^*_N(t,x,\nabla_xV_N(t,x)),x_i,\frac{1}{N}\sum_{j=1}^N\delta_{x_j}\bigg),\quad i=1,\ldots,N,
\end{align*}
  is locally Lipschitz continuous. The local Lipschitz continuity then gives the strong uniqueness of the solution. Notice that we have got \eqref{VN-scaling-1}, the weak existence can be deduced from \eqref{def-theta-*} and from the linear growth property that
\begin{align*}
\bigg|f\bigg(t,\theta^*_N(t,x,\nabla_xV_N(t,x)),x_1,\frac{1}{N}\sum_{j=1}^N\rho(x_j)\bigg)\bigg|\leq C_N(1+|x|),
\end{align*}
for some constant $C_N$.

  Having shown the well-posedness of \eqref{optimal-sde}, the first ``$\leq$'' in \eqref{ineq-1} becomes ``$=$''. The estimates in \eqref{VN-scaling-2} then enable us to replace the ``$\leq$'' in \eqref{ineq-4} with ``$\geq$'', implying that $\theta^*_N$ is optimal and $V_N$ is the value function.
\end{proof}
Using the same method as in Theorem \ref{veri} and combining with the uniform estimates in {Proposition \ref{1st-deri-2-1}}, Proposition \ref{dY/dx-1}, we can prove the following result for long time.
\begin{theorem}\label{veri-1}
  Suppose that the assumptions of Proposition \ref{dY/dx-1} take place.
  The original HJB equation \eqref{HJB-V-N} admits a solution $V_N\in W^{1,2,\infty}_{loc}([0,T]\times\mathbb R^N)$, satisfying for $1\leq i,j\leq N$, $(t,x)\in[0,T]\times\mathbb R^N$,
\begin{align}\label{VN-scaling-3}
    \left|\partial_{x_i}V_N(t,x)\right|&\leq\frac{\tilde C_2(C^L_{11}+C^U_{11})}N\bigg(1+|x_i|^2+\frac1N\sum_{k=1}^N|x_k|^2\bigg)^\frac12+\frac{\tilde C_2(C^L_{10}+C^U_{10})}N,
\end{align}
and
\begin{align}\label{VN-scaling-4}
\quad0\leq\sum_{i,j=1}^N\partial^2_{x_ix_j}V_N(t,x)\xi_i\xi_j\leq\frac{\tilde C_6}N|\xi|^2\quad\text{a.e.}.
\end{align}
  Moreover, such solution $V_N$ is characterized by the value function in \eqref{def-V-N-O} and thus unique. An optimal feedback function is
\begin{align*}
\theta^*_N\big(t,x\big)\in\arg\min_{\theta\in\Theta}\bigg\{\frac\lambda2\big|\theta\big|^2+\sum_{i=1}^Nf\bigg(t,\theta,x_i,\frac1N\sum_{j=1}^N\delta_{x_j})\bigg)\partial_{x_i}V_N(t,x)\bigg\}.
\end{align*}
\end{theorem}
\section{Discussion on the convergence rate}\label{Discussion on convergence rate}

In this section we discuss the convergence rate for the value functions $V_N$ as well as the minimizer $\theta^*_N$ where the number of samples $N$ goes to infinity. In terms of neural SDEs, the convergence of $V_N$ above is instantly interpreted as the convergence of minima of objective functionals, while we may  use the convergence of $\theta^*_N$ above to yield pathwise convergence results that imply the convergence of optimal parameters obtained via neural SDE with $N$ samples (see Proposition \ref{cauchy-sequence-1-0} and Proposition \ref{cauchy-sequence-0-0} below). We recall that for sufficiently large $N$, the conclusion in Theorem \ref{veri} holds as long as $T<\tilde c$, while the conclusion in Theorem \ref{veri-1} holds for any $T>0$.

We first show the interesting fact that the value function $V_N$ of Problem \ref{problem} is actually the finite dimensional projection of a function ${\cal V}$ defined on the set of probability measure. The following lemma implies that $\mathcal V$ in Definition \ref{def-emp-meas} is well-defined.
\begin{lemma}\label{well-define}
   Suppose {\bf Hypothesis (R)}. Let $V_N$ be the value function in \eqref{def-V-N-O}. For samples $x_1,\ldots,x_N\in\mathbb R$ and $y_1,\ldots,y_M\in\mathbb R$, (for $M,N\in\mathbb N$) suppose that
\begin{align*}
\frac1N\sum_{i=1}^N\delta_{x_i}=\frac1M\sum_{i=1}^M\delta_{y_i},
\end{align*}
then for $t\in[0,T]$, $T>0$,
\begin{align*}
V_N(t,x_1,\ldots,x_N)=V_M(t,y_1,\ldots,y_M).
\end{align*}
\end{lemma}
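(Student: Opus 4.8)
The plan is to derive the identity directly from the variational characterization $V_N(t,\cdot)=\inf_{\theta\in\mathcal U^{ad}_t}J_N(t,\theta,\cdot)$ in \eqref{def-V-N-O}, using two elementary structural features of the family $(V_N)_N$. The first is \emph{permutation symmetry}: $V_N(t,x_{\pi(1)},\ldots,x_{\pi(N)})=V_N(t,x_1,\ldots,x_N)$ for every permutation $\pi$ of $\{1,\ldots,N\}$. The second is a \emph{replication identity}: for every $k\in\mathbb N$,
\begin{align*}
V_{kN}\big(t,\underbrace{x_1,\ldots,x_N,\ \ldots,\ x_1,\ldots,x_N}_{k\ \mathrm{blocks}}\big)=V_N(t,x_1,\ldots,x_N).
\end{align*}
(Both $V_N$ and $V_M$ are finite, since $\theta\equiv0$ is admissible and $L,U\ge0$ grow at most quadratically by Assumption \ref{assumption-1}, so the infima are genuine real numbers.) Granting (i)--(ii) the lemma is immediate: writing $\mu=\tfrac1N\sum_{i=1}^N\delta_{x_i}=\tfrac1M\sum_{j=1}^M\delta_{y_j}$, let $z^{(x)}$ be the length-$NM$ list obtained by concatenating $M$ copies of $(x_1,\ldots,x_N)$ and $z^{(y)}$ the length-$NM$ list obtained by concatenating $N$ copies of $(y_1,\ldots,y_M)$. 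Both have empirical measure $\mu$, hence consist of the same multiset of reals, i.e. $z^{(y)}$ is a permutation of $z^{(x)}$; applying the replication identity with $k=M$ to $x$, with $k=N$ to $y$, and permutation symmetry of $V_{NM}$ in between, we obtain $V_N(t,x)=V_{NM}(t,z^{(x)})=V_{NM}(t,z^{(y)})=V_M(t,y)$.

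Permutation symmetry holds because relabelling particle $i$ of the system started from $(x_{\pi(1)},\ldots,x_{\pi(N)})$ as particle $\pi(i)$ started from $x_{\pi(i)}$ leaves the common-noise dynamics \eqref{sample-dyn} invariant (the drift of a particle depends only on its own state, the shared control, and the current empirical measure, all permutation-equivariant) and leaves the two normalised sums and the $L^2$-regulariser in \eqref{obj-func-2} untouched; hence $J_N(t,\theta,x_{\pi(\cdot)})=J_N(t,\theta,x)$ for each admissible $\theta$, and one takes the infimum over $\theta\in\mathcal U^{ad}_t$.

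For the replication identity, fix $\theta\in\mathcal U^{ad}_t$ and let $(X^{\theta,i}_N)_{i=1}^N$ be the pathwise unique solution of \eqref{sample-dyn} started from $(x_1,\ldots,x_N)$. Index the $kN$ particles of the replicated system by pairs $(\ell,i)$ with $1\le\ell\le k$ and $1\le i\le N$, all with initial datum $x_i$, and define $\hat X^{(\ell,i)}(s):=X^{\theta,i}_N(s)$. Then
\begin{align*}
\frac1{kN}\sum_{\ell=1}^k\sum_{i=1}^N\delta_{\hat X^{(\ell,i)}(s)}=\frac1N\sum_{i=1}^N\delta_{X^{\theta,i}_N(s)},
\end{align*}
so the drift of $\hat X^{(\ell,i)}$ equals $f\big(s,\theta(s),X^{\theta,i}_N(s),\tfrac1N\sum_{j}\delta_{X^{\theta,j}_N(s)}\big)$, which is exactly the drift of $X^{\theta,i}_N$; since the driving noise $\sigma W^0$ and the initial conditions agree as well, $(\hat X^{(\ell,i)},W^0,\theta)$ solves the $kN$-particle version of \eqref{sample-dyn}. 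By the pathwise uniqueness built into the definition of $\mathcal U^{ad}_t$ (imposed there for every particle number), the $kN$-particle solution coincides with this replica, $X^{\theta,(\ell,i)}_{kN}=X^{\theta,i}_N$. Substituting into \eqref{obj-func-2} for the $kN$-particle problem, each of the $k$ blocks contributes the same amount and the $\tfrac1{kN}$-averages collapse to $\tfrac1N$-averages, whence $J_{kN}\big(t,\theta,(\text{replicated }x)\big)=J_N(t,\theta,x)$; taking the infimum over $\theta\in\mathcal U^{ad}_t$ gives the replication identity. The one place that needs care is precisely this step — checking that the replicated process solves the $kN$-particle SDE and invoking pathwise uniqueness within the common admissible class — but it is a matter of bookkeeping rather than a genuine difficulty.
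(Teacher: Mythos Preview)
Your proof is correct and follows the same two-step structure as the paper: permutation symmetry of $V_N$ combined with a replication identity $V_N(t,x)=V_{kN}(t,\text{replicated }x)$, chained through the common size $NM$. The only difference is in how uniqueness is secured when verifying that the replicated process is \emph{the} $kN$-particle solution. The paper first reduces to the truncated auxiliary value functions $V^{0,R}_N$ via \eqref{approximation-1} and \eqref{approximation-2}, so that controls lie in the bounded class $\mathcal U^{ad}_{t,R_2}$ and strong uniqueness of \eqref{sample-dyn} follows directly from the Lipschitz regularity of $f$; only afterwards does it pass to the limit. You instead work directly with $V_N$ and invoke the pathwise uniqueness that is built into the definition of $\mathcal U^{ad}_t$ for every particle number, which is legitimate and makes the argument slightly shorter. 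Either route yields the same conclusion; the paper's detour through the approximations trades brevity for not having to lean on that clause of the admissible-set definition.
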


\begin{proof}
In view of \eqref{approximation-1} and \eqref{approximation-2}, it suffices to show
\begin{align*}
  V^{0,R}_N(t,x_1,\ldots,x_N)=V^{0,R}_M(t,y_1,\ldots,y_M),
\end{align*}
for any $R_1,R_2>0$. Here we have defined the value function
\begin{align}\label{def-V-N-0}
V^{0,R}_N(t,x_1,\ldots,x_N):=\inf_{\theta\in\mathcal U^{ad}_{t,R_2}}J^{0,R_1}_N(\theta,t,x_1,\ldots,x_N).
\end{align}
Note that
\begin{align*}
\frac1{NM}\sum_{i=1}^NM\delta_{x_i}=\frac1N\sum_{i=1}^N\delta_{x_i}=\frac1M\sum_{i=1}^M\delta_{y_i}=\frac1{NM}\sum_{i=1}^MN\delta_{y_i}.
\end{align*}
Since the left hand side and the right hand side have the same sample size, it holds that
\begin{align*}
\left\{x_1,\ldots,x_N,x_1,\ldots,x_N,\ldots,x_1,\ldots,x_N\right\}=\left\{y_1,\ldots,y_M,y_1,\ldots,y_M,\ldots,y_1,\ldots,y_M\right\}.
\end{align*}
  Here the left hand side above consists of $M$ duplicates of $\left\{x_1,\ldots,x_N\right\}$, while the right hand side above consists of $N$ duplicates of $\left\{y_1,\ldots,y_M\right\}$. According to the variational definition of $V^{0,R}_{MN}$, it is easy to check the symmetric feature that
\begin{align*}
V^{0,R}_{NM}(t,x_1\mathbf1^\top_M,\ldots,x_N\mathbf1^\top_M)=V^{0,R}_{NM}(t,y_1,\ldots,y_M,y_1,\ldots,y_M,\ldots,y_1,\ldots,y_M),
\end{align*}
{where
\begin{align*}
  \mathbf1_M:={\underbrace{(1,\ldots,1)}_{M\ -\ \text{times}}}^\top.
\end{align*}}
Therefore it suffices to show that
\begin{align}\label{duplication-equality}
V^{0,R}_N(t,x_1,\ldots,x_N)=V^{0,R}_{NM}(t,x_1\mathbf1^\top_M,\ldots,x_N\mathbf1^\top_M).
\end{align}
For any continuous $\theta\in\mathcal U^{ad}_{t,R_2}$, define the following particle systems
\begin{align*}\left\{\begin{aligned}
  &d\tilde X^{(k-1)M+l}_{NM}(s)=f\bigg(s,\theta(s),\tilde X^{(k-1)M+l}_{NM}(s),\frac1{NM}\sum_{i=1}^{NM}\delta_{\tilde X^i_{NM}(s)}\bigg)ds+\sigma dW^0(s),\\
  &\tilde X^{(k-1)M+l}_{NM}(t)=x_k,\quad1\leq k\leq N,\ 1\leq l\leq M.
  \end{aligned}\right.
\end{align*}
Now that $\theta$ is a bounded process,  the solution admits strong uniqueness. Taking advantage of the symmetry and the strong uniqueness, it is easy to verify that for $s\in[t,T]$, the only solution to the above SDE satisfies
\begin{align}\label{group}
  \tilde X^{(k-1)M+l_1}_{NM}(s)=\tilde X^{(k-1)M+l_2}_{NM}(s),\quad1\leq k\leq N,\ 1\leq l_1,l_2\leq M.
\end{align}
Denote by
\begin{align*}
  X^k_N(s):=\tilde X^{(k-1)M+1}_{NM}(s),\quad1\leq k\leq N.
\end{align*}
In view of \eqref{group}, we have
\begin{align*}
  \frac1{MN}\sum_{i=1}^{NM}\delta_{\tilde X^i_{NM}(t)}=\frac1N\sum_{i=1}^N\delta_{X^i_N(t)}.
\end{align*}
Moreover, $(X^1_N(s),\ldots,X^N_N(s))$ uniquely solves
\begin{align*}\left\{\begin{aligned}
  &dX^i_N(s)=f\bigg(s,\theta(s),X^i_N(s),\frac1N\sum_{i=1}^N\delta_{X^i_N(s)}\bigg)ds+\sigma dW^0(s),\\
  &X^i_N(t)=x_i,\quad1\leq i\leq N.
  \end{aligned}\right.
\end{align*}
Therefore
\begin{align*}
  &\quad J^{0,R_1}_N(t,\theta,x_1,\ldots,x_N)=\mathbb E\bigg[\frac1 N\sum_{i=1}^N\int_t^TL_{R_1}\big(X^i_N(s)\big)ds+\frac1N\sum_{i=1}^NU_{R_1}\big(X^i_N(T)\big)+\frac\lambda2\int_t^T\left|\theta(s)\right|^2ds\bigg]\notag\\
  &=\mathbb E\bigg[\frac1 {NM}\sum_{i=1}^{NM}\int_t^TL_{R_1}\big(\tilde X^i_{NM}(s)\big)ds+\frac1{NM}\sum_{i=1}^{NM}U_{R_1}\big(\tilde X^i_{NM}(T)\big)+\frac\lambda2\int_t^T\left|\theta(s)\right|^2ds\bigg]\notag\\
  &=J^{0,R_1}_{NM}\big(t,\theta,x_1\mathbf1^\top_M,\ldots,x_N\mathbf1^\top_M\big).
\end{align*}
Since $\theta$ is taken arbitrarily from $\mathcal U^{ad}_{t,R_2}$, we have \eqref{duplication-equality}.
\end{proof}
Let us now turn to the proof of Theorem \ref{extend-U} with the estimates in Proposition \ref{1st-deri-2-1}.
\begin{proof}[Proof of Theorem \ref{extend-U}]
Up to a duplication, we may assume $\mu_1$ and $\mu_2$ admit the following representation
\begin{align*}
\mu_1=\frac1N\sum_{i=1}^N\delta_{x_i},\ \mu_2=\frac1N\sum_{i=1}^N\delta_{y_i}.
\end{align*}
Then
\begin{align*}
\mathcal W_2(\mu_1,\mu_2)=\min_{\sigma}\bigg(\frac1N\sum_{i=1}^N|x_i-y_{\sigma(i)}|^2\bigg)^{\frac12},
\end{align*}
where the minimum is taken over all permutation on $\{1,\ldots,N\}$. Up to a permutation, we may further assume that
\begin{align*}
\mathcal W_2(\mu_1,\mu_2)=\bigg(\frac1N\sum_{i=1}^N|x_i-y_i|^2\bigg)^{\frac12}.
\end{align*}
Denote by
\begin{align*}
\mathbf x_N:=(x_1,\ldots,x_N),\ \mathbf y_N:=(y_1,\ldots,y_N),
\end{align*}
then
\begin{align*}
&\quad|{\cal V}(t,\mu_1)-{\cal V}(t,\mu_2)|=|V_N(\mathbf x_N)-V_N(\mathbf y_N)|.
\end{align*}
Let $g:[0,1]\to\mathbb R$ be defined as
\begin{align*}
  g(\gamma):=V_N(t,\gamma\mathbf x_N+(1-\gamma)\mathbf y_N).
\end{align*}
 In view of the uniform estimates in Lemma \ref{1st-deri} and convergence in \eqref{approximation-2}, $V_N$ has weak derivative $\nabla_xV_N$ satisfying
\begin{align*}
    \big|\partial_{x_i}V_N(t,x)\big|&\leq\frac{\tilde C_2(C^L_{11}+C^U_{11})}N\bigg(1+|x_i|^2+\frac1N\sum_{j=1}^N|x_j|^2\bigg)^\frac12+\frac{\tilde C_2(C^L_{10}+C^U_{10})}N,\ (t,x)\in[0,T]\times\mathbb R^N.
\end{align*}
Therefore,
\begin{align}\label{differ-U-1}
{|V_N(t,\mathbf x_N)-V_N(t,\mathbf y_N)|\leq\int_0^1|g'(\gamma)|d\gamma}\leq\sum_{i=1}^N\int_0^1|\partial_{x_i}V_N(t,\gamma\mathbf x_N+(1-\gamma)\mathbf y_N)|\cdot|x_i-y_i|d\gamma,
\end{align}
where
\begin{align*}
 &\quad|\partial_{x_i}V_N(t,\gamma\mathbf x_N+(1-\gamma)\mathbf y_N)|\notag\\
&\leq\frac{\tilde C_2(C^L_{11}+C^U_{11})}N\bigg(1+|\gamma x_i+(1-\gamma)y_i|^2+\frac1N\sum_{j=1}^N|\gamma x_j+(1-\gamma)y_j|^2\bigg)^{\frac12}+\frac{\tilde C_2(C^L_{10}+C^U_{10})}N\notag\\
  &\leq\frac{\tilde C_2(C^L_{11}+C^U_{11})}N\bigg[1+|\gamma x_i+(1-\gamma)y_i|+\bigg(\frac1N\sum_{j=1}^N|\gamma x_j+(1-\gamma)y_j|^2\bigg)^{\frac12}\bigg]+\frac{\tilde C_2(C^L_{10}+C^U_{10})}N.
\end{align*}
Direct calculation gives
\begin{align*}
  &\quad|x_i-y_i|\int_0^1|\gamma x_i+(1-\gamma)y_i|d\gamma=\frac{|x_i-y_i|}{2(x_i-y_i)}(|x_i|x_i-|y_i|y_i)\leq\frac12|x_i-y_i|^2,\\
  &\quad|x_i-y_i|\int_0^1\bigg(\frac1N\sum_{j=1}^N|\gamma x_j+(1-\gamma)y_j|^2\bigg)^{\frac12}d\gamma\leq|x_i-y_i|\bigg(\frac1N\sum_{j=1}^N\int_0^1|\gamma x_j+(1-\gamma)y_j|^2d\gamma\bigg)^{\frac12}\notag\\
&\leq|x_i-y_i|\bigg(\frac1{2N}\sum_{j=1}^N|x_j|^2+\frac1{2N}\sum_{j=1}^N|y_j|^2\bigg)^{\frac12}.
\end{align*}
We notice that in the previous computations we have assumed that $x_{j}\neq y_{j}$, otherwise the inequalities are trivially true.
Plug the above inequalities into \eqref{differ-U-1},
\begin{align*}
|{\cal V}(t,\mu_1)-{\cal V}(t,\mu_2)|&\leq\frac{\tilde C_2(C^L_{11}+C^U_{11}+C^L_{10}+C^U_{10})}N\sum_{i=1}^N|x_i-y_i|+\frac{\tilde C_2(C^L_{11}+C^U_{11})}{2N}\sum_{i=1}^N|x_i-y_i|^2\notag\\
&\quad+\frac{\tilde C_2(C^L_{11}+C^U_{11})}N\bigg(\sum_{i=1}^N|x_i-y_i|\bigg)\bigg(\frac1{2N}\sum_{j=1}^N|x_j|^2+\frac1{2N}\sum_{j=1}^N|y_j|^2\bigg)^{\frac12}\notag\\
&\leq\tilde C_2(C^L_{11}+C^U_{11}+C^L_{10}+C^U_{10})\mathcal W_2(\mu_1,\mu_2)+\frac{\tilde C_2(C^L_{11}+C^U_{11})}2\mathcal W^2_2(\mu_1,\mu_2)\notag\\
&\quad+\frac{\tilde C_2(C^L_{11}+C^U_{11})}2\mathcal W_2(\mu_1,\mu_2)\cdot\bigg(\int_{\mathbb R}y^2\mu_1(dy)+\int_{\mathbb R}y^2\mu_2(dy)\bigg).
\end{align*}
Note that, as an implication of the law of large numbers, for any $\mu\in\mathcal P_2(\mathbb R)$, there exists a sequence of empirical measures $(\mu^N)_{N\geq1}$ such that $\lim_{N\to+\infty}\mathcal W_2(\mu^N,\mu)=0.$ For arbitrary $(\mu^N)_{N\geq1}$ that converges to $\mu$ in $\mathcal P_2(\mathbb R)$, the above inequality then yields that $\big(\mathcal V(t,\mu^N)\big)_{N\geq1}$ is a Cauchy sequence. Since the mentioned $(\mu^N)_{N\geq1}$ could be arbitrary, the following extension of $\mathcal V(t,\cdot)$ at $\mu$ is well-defined:
\begin{align*}
 \mathcal V(t,\mu):=\lim_{N\to+\infty}\mathcal V(t,\mu^N),
\end{align*}
and the uniqueness of extension is implied by the continuity of $\mathcal V$.
\end{proof}

In view of Theorem \ref{extend-U} and Corollary \ref{VN-con-rate}, the definition domain of ${\cal V}$ can be extended to $\mathcal P_2(\mathbb R)$. Moreover, they reveal the convergence (at a specific rate) of $V_N(t,x_1,\ldots,x_N)$ to $\mathcal V(t,\mu)$ whenever $\frac1N\sum_{i=1}^N\delta_{x_i}$ converges in $\mathcal P_2(\mathbb R)$. It is thus natural to further consider the convergence of feedback control function $\theta^*(t,x_1,\ldots,x_N)$, as well as the corresponding convergence rate.

Consider the empirical measure
\begin{align*}
\mu^N=\frac1N\sum_{i=1}^N\delta_{x_i},
\end{align*}
and introduce the notation
\begin{align}\label{deri-def}
D_{\mu}^{(N)}{\cal V}(t,\mu^N,x_i):=N\partial_{x_i}V_N(t,x_1,\ldots,x_N),\ i=1,\ldots,N.
\end{align}
In view of the symmetric property of $V_N$, $D_{\mu}^{(N)}{\cal V}(t,\mu^N,x_i)$ above is well-defined. Next we show that $\mathcal V$ is differentiable in the measure variable at $(t,\mu^N)$ and
\begin{align}\label{equivalent-deri}
 \partial_\mu {\cal V}(t,\mu^N,x_i)=D_{\mu}^{(N)}{\cal V}(t,\mu^N,x_i).
\end{align}

\begin{lemma}\label{well-define-1}
   Let $V_N$ be the value function in \eqref{def-V-N-O}. For samples $x_1,\ldots,x_N\in\mathbb R$ and $y_1,\ldots,y_M\in\mathbb R$, suppose that
\begin{align*}
\mu^N:=\frac1N\sum_{i=1}^N\delta_{x_i}=\frac1M\sum_{i=1}^M\delta_{y_i}=:\nu^M,
\end{align*}
then for $t\in[0,T]$, $T>0$ and any bounded continuous function $\varphi:\ \mathbb R\ \to\ \mathbb R$, we have
\begin{align*}
\int_{\mathbb R}\varphi(y)D^{(N)}_\mu {\cal V}(t,\mu^N,y)\mu^N(dy)=\int_{\mathbb R}\varphi(y)D^{(M)}_\mu {\cal V}(t,\nu^M,y)\nu^M(dy),
\end{align*}
as well as
\begin{align}\label{equivalent-deri-1}
\lim_{\epsilon\to0+}\epsilon^{-1}\bigg({\cal V}\big(t,(I+\epsilon\varphi)\sharp\mu^N\big)-{\cal V}(t,\mu^N)\bigg)=\int_{\mathbb R}\varphi(y)D^{(N)}_\mu {\cal V}(t,\mu^N,y)\mu^N(dy).
\end{align}
As a result, \eqref{equivalent-deri} is valid.

\end{lemma}
\begin{proof}
  Similarly to the comments before \eqref{duplication-equality}, it suffices to show that
\begin{align}\label{fdbk-eq}
  \sum_{i=1}^N\varphi(x_i)\partial_{x_i}V_N(t,x_1,\ldots,x_N)=\sum_{i=1}^{MN}\varphi(y_i)\partial_{y_i}V_{NM}(t,y_1,\ldots,y_{NM}),
\end{align}
where $(y_1,\ldots,y_{NM})=(x_1\mathbf1^\top_M,\ldots,x_N\mathbf1^\top_M)$, in other words, $y_{(i-1)+k}=x_i$, $1\leq i\leq N$.
According to Lemma \ref{well-define},
\begin{align*}
  V_N(t,x_1,\ldots,x_N)=V_{NM}(t,x_1\mathbf1^\top_M,\ldots,x_N\mathbf1^\top_M)=V_{NM}(t,y_1,\ldots,y_{NM}).
\end{align*}
Take the { weak} derivative with respect to $x_i$ and obtain
\begin{align*}
  \partial_{x_i}V_N(t,x_1,\ldots,x_N)=\sum_{k=1}^M\partial_{y_{(i-1)+k}}V_{NM}(t,y_1,\ldots,y_{NM}).
\end{align*}
Using the equality above and noticing that $y_{(i-1)+k}=x_i$, $1\leq i\leq N$, we can show that \eqref{fdbk-eq} is true. 

To show \eqref{equivalent-deri-1}, we may plug in \eqref{def-U} and \eqref{deri-def}. Then \eqref{equivalent-deri-1} and \eqref{equivalent-deri} follows.
\end{proof}
According to Lemma \ref{well-define-1}, we may present the optimal feedback function $\theta^*_N$ in such a way that
\begin{align}\label{measure-theta}
\theta^*(t,\mu^N)&:=\theta^*_N\big(t,x,\nabla_xV_N(t,x)\big)\notag\\
  &=\argmin_{\theta\in\Theta}\bigg\{\frac\lambda2|\theta|^2+\int_{\mathbb R}f(t,\theta,y,\mu^N)\partial_\mu {\cal V}(t,\mu^N,y)\mu^N(dy)\bigg\}.
\end{align}
Similar to Theorem \ref{extend-U}, we can show the Lipschitz continuity of $\theta^*(t,\mu^N)$ in \eqref{measure-theta}, which implies the convergence rate of the optimal feedback function.

\begin{theorem}\label{lip-lift-theta} 
Suppose that the assumptions of Lemma \ref{1st-deri-2-0-1} take place.
Let $\mu_1$, $\mu_2$ be two empirical measures and $\theta^*(t,\mu)$ be defined as in \eqref{measure-theta}. Then for $T<\tilde c$, where $\tilde c$ is from Proposition \ref{dY/dx-1-0},
\begin{align}\label{theta-convergence}
|\theta^*(t,\mu_1)-\theta^*(t,\mu_2)|\leq\tilde C_8\mathcal W_1(\mu_1,\mu_2).
\end{align}
Here
\begin{align*}
{\tilde C_8:=(\lambda-\lambda_0)^{-1}C^Q+(\lambda-\lambda_0)^{-1}\|f_\theta\|_\infty\tilde C_6.}
\end{align*}
As a result, $\theta^*(t,\cdot)$ can be uniquely extended to a Lipschitz continuous mapping on $(\mathcal P_2(\mathbb R),\mathcal W_{1})$.
\end{theorem}
\begin{proof}
Up to a duplication, we may assume that $\mu_1$ and $\mu_2$ have the same sample size. Denote by
\begin{align*}
\mu_1=\frac1N\sum_{i=1}^N\delta_{x_i},\ \mu_1=\frac1N\sum_{i=1}^N\delta_{y_i}.
\end{align*}
    It is easy to see from \eqref{measure-theta} that $\theta^*(t,x_1,\ldots,x_N)$ on the right hand side remains unchanged after a permutation of the input $\{x_1,\ldots,x_N\}$. Hence up to a permutation, we may assume that
\begin{align*}
\mathcal W_1(\mu_1,\mu_2)=\frac1N\sum_{i=1}^N|x_i-y_i|.
\end{align*}
Define
\begin{align*}
g(\gamma):=\theta^*(t,\gamma\mathbf x_N+(1-\gamma)\mathbf y_N),\ \gamma\in[0,1].
\end{align*}
According to \eqref{theta-weak-deri} and \eqref{VN-scaling-2},
\begin{align*}
&\quad|\theta^*(t,\mu_1)-\theta^*(t,\mu_2)|=|\theta^*(t,x_1,\ldots,x_N)-\theta^*(t,y_1,\ldots,y_N)|\notag\\
&=|g(1)-g(0)|\leq\int_0^1|g'(\gamma)|d\gamma\notag\\
      &\leq\sum_{l=1}^N|x_l-y_l|\int_0^1\bigg|\frac\partial{\partial p_l}\theta^*+\sum_{k=1}^N\frac\partial{\partial q_k}\theta^*\cdot\partial^2_{kl}V_N\bigg|(t,\gamma\mathbf x_N+(1-\gamma)\mathbf y_N)d\gamma\notag\\
&\leq\frac{{(\lambda-\lambda_0)^{-1}}C^Q+{(\lambda-\lambda_0)^{-1}}\|f_\theta\|_\infty\tilde C_6}N\sum_{l=1}^N|x_l-y_l|\notag\\
&=({(\lambda-\lambda_0)^{-1}}C^Q+{(\lambda-\lambda_0)^{-1}}\|f_\theta\|_\infty\tilde C_6)\mathcal W_1(\mu_1,\mu_2).
\end{align*}
\end{proof}
Now we have the convergence rate of feedback function as the sample size grows to infinity.
\begin{corollary}\label{theta-con-rate}
Suppose that the assumptions of Theorem \ref{lip-lift-theta} take place and suppose that $\mu^N:=\frac1N\sum_{i=1}^N\delta_{x_i}\to\mu$ in $(\mathcal P_2(\mathbb R),\mathcal W_{2}),$ as $N\to+\infty$. Then for $T<\tilde c$ where $\tilde c$ is from Proposition \ref{dY/dx-1-0},
\begin{align*}
\lim_{N\to+\infty}\theta^*_N\big(t,x,\nabla_xV_N(t,x)\big)=\theta^*(t,\mu),
\end{align*}
at a rate
\begin{align*}
|\theta^*_N\big(t,x,\nabla_xV_N(t,x)\big)-\theta^*(t,\mu)|\leq\tilde C_8\mathcal W_1(\mu^N,\mu).
\end{align*}
\end{corollary}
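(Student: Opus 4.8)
The plan is to read off the statement directly from the $\mathcal W_1$-Lipschitz continuity of the feedback map established in Theorem~\ref{lip-lift-theta}, once we identify the left-hand side of the claimed estimate with $\theta^*(t,\mu^N)$. Indeed, by the definition \eqref{measure-theta} together with Lemma~\ref{well-define-1}, for the empirical measure $\mu^N=\frac1N\sum_{i=1}^N\delta_{x_i}$ one has
\begin{align*}
\theta^*_N\big(t,x,\nabla_xV_N(t,x)\big)=\theta^*(t,\mu^N),
\end{align*}
so the corollary reduces to the continuity of $\nu\mapsto\theta^*(t,\nu)$ at $\mu$, together with an explicit modulus.

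First I would invoke Theorem~\ref{lip-lift-theta}, which (under {\bf Hypothesis (R)} and for $T<\tilde c$) asserts that $\theta^*(t,\cdot)$ extends uniquely to a Lipschitz continuous map on $(\mathcal P_2(\mathbb R),\mathcal W_1)$ with Lipschitz constant $\tilde C_8$; in particular, for the empirical $\mu^N$ and the limit $\mu\in\mathcal P_2(\mathbb R)$,
\begin{align*}
\big|\theta^*(t,\mu^N)-\theta^*(t,\mu)\big|\le\tilde C_8\,\mathcal W_1(\mu^N,\mu).
\end{align*}
The only mildly delicate point hidden in this extension is the passage from pairs of empirical measures (for which Theorem~\ref{lip-lift-theta} gives the bound directly) to an arbitrary pair in $\mathcal P_2(\mathbb R)$: one approximates $\mu$ by empirical measures $\nu^k\to\mu$ in $\mathcal W_2$ (possible since such measures are $\mathcal W_2$-dense, hence $\mathcal W_1$-dense because $\mathcal W_1\le\mathcal W_2$), applies the estimate of Theorem~\ref{lip-lift-theta} to the pairs $(\mu^N,\nu^k)$, and lets $k\to+\infty$, using continuity of the extension on the left and the triangle inequality for $\mathcal W_1$ together with $\mathcal W_1(\nu^k,\mu)\le\mathcal W_2(\nu^k,\mu)\to0$ on the right. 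No constant beyond $\tilde C_8$ enters.

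Finally, combining the two displays above with the hypothesis $\mathcal W_2(\mu^N,\mu)\to0$ and the elementary inequality $\mathcal W_1\le\mathcal W_2$ yields
\begin{align*}
\big|\theta^*_N\big(t,x,\nabla_xV_N(t,x)\big)-\theta^*(t,\mu)\big|\le\tilde C_8\,\mathcal W_1(\mu^N,\mu)\le\tilde C_8\,\mathcal W_2(\mu^N,\mu)\xrightarrow[N\to+\infty]{}0,
\end{align*}
which is precisely the convergence and the rate claimed. Thus the corollary is an immediate consequence of the identification \eqref{measure-theta}, Lemma~\ref{well-define-1}, and Theorem~\ref{lip-lift-theta}. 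The only non-routine ingredient, namely the $\mathcal W_1$-Lipschitz continuity of $\theta^*(t,\cdot)$, has already been established, so I do not anticipate any genuine obstacle: the remaining work is essentially bookkeeping, and the statement is in fact a (weaker, since $\mathcal W_1\le\mathcal W_2$) reformulation of what Theorem~\ref{lip-lift-theta} already provides.
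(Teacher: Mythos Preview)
Your proposal is correct and follows essentially the same approach as the paper, which simply states that the corollary is direct from \eqref{measure-theta} and Theorem~\ref{lip-lift-theta}. Your extra care in spelling out the extension from empirical measures to general $\mu\in\mathcal P_2(\mathbb R)$ via approximation is already implicit in the last sentence of Theorem~\ref{lip-lift-theta}, so there is nothing to add.
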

\begin{proof}
  This is directly from \eqref{measure-theta} and Theorem \ref{lip-lift-theta}.
\end{proof}
Another consequence of Theorem \ref{lip-lift-theta} is the pathwise convergence at certain rates.
\begin{proposition}\label{cauchy-sequence-0-0}
 Let $X^*_N = (X^{1,*}_N(s),\dots,X^{N,*}_N(s))_{s\in[0,T]}$, $N\geq1$ be the optimal path of Problem \ref{problem}, with $t=0$ and initial values $x^{(1)}_N,x^{(2)}_N,\ldots,x^{(N)}_N$. Suppose that the assumptions of Theorem \ref{lip-lift-theta} take place and suppose that
  $\frac1N\sum_{i=1}^N\delta_{x^{(i)}_N}\to\mu\ \text{in}\ (\mathcal P_2(\mathbb R),\mathcal W_{2})$, as $N\to+\infty$. Then for $T<\tilde c$ where $\tilde c$ is from Proposition \ref{dY/dx-1-0}, there exists an adapted limit process $(\theta^*,\mu^*)$, where $\theta^*(s)\in\Theta$ and $\mu^*(s)\in\mathcal P_1(\mathbb R)$, $0\leq s\leq T$, such that $\mu^*(0)=\mu$ and 
 \begin{align}
 &\max_{s\in[t,T]}\mathcal W_1(\mu^*_N(s),\mu^*(s))\leq\hat C_8\mathcal W_1\big(\mu^*_N(0),\mu^*(0)\big),\notag\\
 \label{cauchy-sequence-0}&\max_{s\in[0,T]}|\theta^*_N\big(s,X^*_N(s),\nabla_xV_N(t,X^*_N(s))\big)-\theta^*(s)|\leq\hat C_8\mathcal W_1(\mu^*_N(0),\mu^*(0)).
\end{align}
Here $\mu^*_N(t):=\frac1N\sum_{i=1}^N\delta_{X^{i,*}_N(t)}$ and $\hat C_{8}>0$ is a constant independent of $N$.
\end{proposition}
\begin{proof}
 In order to show the first inequality in \eqref{cauchy-sequence-0}, it suffices to first show that, for the sample paths $X^*_N$ and $X^*_M$, which corresponds to sample number $N$ and $M$ respctively, it holds that
\begin{align}\label{cauchy-sequence}
 \max_{s\in[0,T]}\mathcal W_1(\mu^*_N(s),\mu^*_M(s))\leq\hat C_8\mathcal W_1\big(\mu^*_N(0),\mu^*_M(0)\big),
\end{align}
and then pass $M$ to infinity. Here we have used the assumption that $\mu^*_N(0)=\frac1N\sum_{i=1}^N\delta_{x^{(i)}_N}$, $N\geq1$, is a Cauchy sequence in $(\mathcal P_2(\mathbb R),\mathcal W_{2})$.

According to Lemma \ref{well-define-1} and \eqref{measure-theta}, define
\begin{align*}
 f^*(s,x,\mu):=f(s,\theta^*(s,\mu),x,\mu),\quad(s,x,\mu)\in[0,T]\times\mathbb R\times\mathcal P(\mathbb R),
\end{align*}
then the optimal path $X^*_N$ satisfies
\begin{align}\label{cauchy-sequence-1}
 dX^{i,*}_N(s)=f^*(s,X^{i,*}_N(s),\mu^*_N(s))ds+\sigma dW(s),\quad1\leq i\leq N.
 \end{align}
 Moreover, according to Theorem \ref{lip-lift-theta}, for $x_1,\ldots,x_N,\tilde x_1,\ldots,\tilde x_N\in\mathbb R$, it holds that
 \begin{align}\label{cauchy-sequence-2}
  \bigg|f^*\bigg(s,x_i,\frac1N\sum_{ij=1}^N\delta_{x_j}\bigg)-f^*\bigg(s,\tilde x_i,\frac1N\sum_{j=1}^N\delta_{\tilde x_j}\bigg)\bigg|\leq\hat C_8|x_i-\tilde x_i|+\frac{\hat C_8}N\sum_{j=1}^N|x_j-\tilde x_j|,\quad1\leq i\leq N.
 \end{align}
Given the dynamics in \eqref{cauchy-sequence-1}, we may proceed in a similar fashion to the proof in Lemma \ref{well-define} and show that 
\begin{align*}
 \mu^*_N(s)=\tilde\mu^*_{NM}(s),
\end{align*}
where
\begin{align*}
 \tilde\mu^*_{NM}(s):=\frac1{NM}\sum_{i=1}^{NM}\delta_{\tilde X^{i,*}_{NM}(s)},\quad\tilde\mu^*_{NM}(0)=\frac1{NM}\sum_{i=1}^NM\delta_{x^{(i)}_N},
\end{align*}
with
\begin{align*}
 d\tilde X^{i,*}_{NM}(s)=f^*(t,\tilde X^{i,*}_{NM}(s),\tilde\mu^*_{NM}(s))ds+\sigma dW(s),\quad1\leq i\leq NM.
 \end{align*}
 Therefore, up to a duplication, showing \eqref{cauchy-sequence} is equivalent to showing that
 \begin{align}\label{cauchy-sequence-3}
  \max_{s\in[0,T]}\mathcal W_1(\mu^*_N(s),\tilde\mu^*_N(s))\leq\hat C_8\mathcal W_1\big(\mu^*_N(0),\tilde\mu^*_N(0)\big),\quad N\geq1,
 \end{align}
where
\begin{align*}
 \tilde\mu^*_N(s):=\frac1N\sum_{i=1}^N\delta_{\tilde X^{i,*}_N(s)},\quad\tilde\mu^*_N(0)=\frac1N\sum_{i=1}^N\delta_{\tilde x^{(i)}_N},
\end{align*}
with
\begin{align*}
 d\tilde X^{i,*}_N(s)=f^*(t,\tilde X^{i,*}_N(s),\tilde\mu^*_N(s))ds+\sigma dW(s),\quad1\leq i\leq N.
 \end{align*}
 Here $\tilde x^{(1)}_N,\ldots,\tilde x^{(N)}_N$ are $N$ arbitrary numbers from $\mathbb R$.
 But in view of \eqref{cauchy-sequence-2}, subtracting the above and \eqref{cauchy-sequence-1} as well as the standard Gr\"onwall's inequality then yields \eqref{cauchy-sequence-3}, which further implies \eqref{cauchy-sequence}.
 
 Having obtained \eqref{cauchy-sequence}, we may use Theorem \ref{lip-lift-theta} to further show that for any $t\in[0,T]$,
 \begin{align}\label{theta-convergence-1}
  &\quad|\theta^*_N\big(s,X_N(s),\nabla_xV_N(s,X_N(s))\big)-\theta^*_M\big(s,X_M(s),\nabla_xV_M(s,X_M(s))\big)|\notag\\
  &=|\theta^*(s,\mu^*_N(s))-\theta^*(s,\mu^*_M(s))|\leq\hat C_8\mathcal W_1(\mu^*_N(0),\mu_M(0)).
 \end{align}
Passing $M$ to infinity in the above yields the second inequality in \eqref{cauchy-sequence-0}.
\end{proof}
The path $\theta^*_N\big(s,X_N(s),\nabla_xV_N(s,X_N(s))\big)$, $s\in[0,T]$ in Proposition \ref{cauchy-sequence-0-0} actually corresponds to the optimal parameters obtained via the neural SDE with $N$ samples. Hence we may interpret Proposition \ref{cauchy-sequence-0-0} in such a way that the aforementioned parameters converge, at a certain rate, as long as the empirical distributions of inputs converge as $N$ tends to infinity.

In addition to the above convergence for short time, we can also obtain the global convergence under assumptions on convexity. We first do some preparation in Lemma \ref{Holder-cont.} then present the main results in Theorem \ref{lip-lift-theta-1}. Denote by
\begin{align*}
  \mu^N:=\frac1N\sum_{i=1}^N\delta_{x_i},\quad\nu^N:=\frac1N\sum_{i=1}^N\delta_{y_i}.
\end{align*}
\begin{lemma}\label{Holder-cont.}
  Suppose that the assumptions of Proposition \ref{dY/dx-1} take place.
  Then  \begin{align}\label{2nd-error-0}
  \frac1N\sum_{i=1}^N\big|\partial_{\mu}{\cal V}(t,\mu^N,x_i)-\partial_{\mu}{\cal V}(t,\nu^N,y_i)\big|^2\leq\frac{\tilde C_6}N\sum_{i=1}^N|x_i-y_i|^2.
  \end{align}
\end{lemma}
\begin{proof}
  In view of Theorem \ref{veri-1}, for $(t,x)\in[0,T]\times\mathbb R^N$ we have
  \begin{align*}
    \nabla^2_xV_N(t,x)=\bigg(\nabla^2_xV_N(t,x)^{\frac12}\bigg)^\top\nabla^2_xV_N(t,x)^{\frac12},
  \end{align*}
  for some matrix $\nabla^2_xV_N(t,x)^{\frac12}\in\mathbb R^{N\times N}$ such that for any $\alpha\in\mathbb R^N$ with $|\alpha|=1$,
  \begin{align*}
    |\nabla^2_xV_N(t,x)^{\frac12}\alpha|\leq\sqrt{\frac{\tilde C_6}N}|\alpha|.
  \end{align*}
  Therefore for any $\alpha,x,y\in\mathbb R^N$,
  \begin{align*}
    &\quad\langle\alpha,\nabla_xV_N(t,x)-\nabla_xV_N(t,y)\rangle\notag\\
    &=\int_0^1\bigg\langle\nabla^2_xV_N(t,y+s(x-y))^{\frac12}\alpha,\nabla^2_xV_N(t,y+s(x-y))^\frac12(x-y)\bigg\rangle ds\notag\\
    &\leq\frac{\tilde C_6}N|\alpha|\cdot|x-y|.
  \end{align*}
  The inequality above implies that
  \begin{align*}
    |\nabla_xV_N(t,x)-\nabla_xV_N(t,y)|\leq\frac{\tilde C_6}N|x-y|,
  \end{align*}
  which is \eqref{2nd-error-0} according to \eqref{deri-def}.
\end{proof}
Lemma \ref{Holder-cont.} tells that we can extend the domain of $\partial_\mu {\cal V}(t,\nu,\cdot)$ from the set of all empirical measures to $\nu\in\mathcal P_2(\mathbb R)$ in some weak sense, which is formalized as follows.
\begin{corollary}\label{extend-derivative}
  For each $t\in[0,T]$, there exists a Lipschitz continuous mapping $\Phi_t$ that maps empirical measures on $\mathbb R$ to $\mathcal P_2(\mathbb R)$ in such a way that for any $\mu=\frac1N\sum_{i=1}^N\delta_{x_i}$,
  \begin{align*}
    \Phi_t(\mu)=\frac1N\sum_{i=1}^N\delta_{\partial_\mu {\cal V}(t,\mu,x_i)}.
  \end{align*}
Therefore, $\Phi_t$ can be uniquely extended to a Lipschitz continuous map $\Phi_t:\mathcal P_2(\mathbb R)\to\mathcal P_2(\mathbb R)$. Moreover, with an abuse of notation, there exists a unique nonnegative function $\Phi_t(\mu,\cdot)\in L^1(\mu;\mathbb R)$ such that for any Borel subset $A\subseteq\mathbb R$:
\begin{align*}
 \Phi_t(\mu)(A)=\int_A\Phi_t(\mu,y)\mu(dy).
\end{align*}
Lastly, for any bounded continuous function $\varphi:\ \mathbb R\ \to\ \mathbb R$, we have
\begin{align}\label{equivalent-deri-1}
\lim_{\epsilon\to0+}\epsilon^{-1}\bigg({\cal V}\big(t,(I+\epsilon\varphi)\sharp\mu\big)-{\cal V}(t,\mu)\bigg)=\int_{\mathbb R}\varphi(y)\Phi_t(\mu)(dy)=\int_{\mathbb R}\varphi(y)\Phi_t(\mu,y)\mu(dy).
\end{align}
\end{corollary}
\begin{proof}
  Consider the following empirical measures
\begin{align*}
  \mu^N=\frac1N\sum_{i=1}^N\delta_{x_i},\quad\nu^N=\frac1N\sum_{i=1}^N\delta_{y_i}.
\end{align*}In view of the symmetric property, there is no loss of generality {in} assuming
\begin{align*}
  \mathcal W_2(\mu^N,\nu^N)=\bigg(\frac1N\sum_{i=1}^N|x_i-y_i|^2\bigg)^\frac12.
\end{align*}
Then we have from Lemma \ref{Holder-cont.} that
\begin{align*}
  &\mathcal W_2\big(\Phi_t(\mu^N),\Phi_t(\nu^N)\big)\leq\sqrt{\tilde C_6}\mathcal W_2(\mu^N,\nu^N).
\end{align*}
Hence $\Phi_t$ is Lipschitz continuous. 

Consider now $A\cap{\rm supp}(\mu)=\emptyset$ as well as i.i.d random variables $(\xi_i)_{i\geq1}$ with law $\mu$. Let $\hat\mu^N=\frac1N\sum_{i=1}^N\delta_{\xi_i}$. It holds  that $\xi_i\notin A,\ i\geq1$ a.s., hence
\begin{align*}
\mathbb E[\Phi_t(\hat\mu^N)(A)]=\mathbb E\bigg[\int_A\partial_\mu {\cal V}(t,\hat\mu^N,y)\hat\mu^N(dy)\bigg]=0.
\end{align*}
In view of the strong law of large numbers and that $\Phi_t$ is Lipschitz continuous, $\Phi_t(\hat\mu^N)(A)\to\Phi_t(\mu)(A)$ a.s.. Bounded convergence theorem yields $\Phi_t(\mu)(A)=0.$ Notice that $A$ is arbitrary such that $A\cap{\rm supp}(\mu)=\emptyset$, we have that the Radon--Nikodym derivative $\Phi_t(\mu,\cdot)\in L^1(\mu;\mathbb R)$. It remains to show \eqref{equivalent-deri-1}. In view of Lemma \ref{well-define-1},
\begin{align*}
 {\cal V}\big(t,(I+\epsilon\varphi)\sharp\mu^N\big)-{\cal V}(t,\mu^N)=\int_0^\varepsilon \int_{\mathbb R}\varphi(y)\Phi_t((I+s\varphi)\sharp\mu^N)(dy)ds.
\end{align*}
Since $\Phi_t(\cdot)$ is a Lipschitz mapping, we may first send $N$ to infinity in the above then take derivative with respect to $\varepsilon$ at  $\varepsilon=0$ and obtain $\eqref{equivalent-deri-1}$.
\end{proof}

Before going into the next convergence result, we would like to discuss Corollary \ref{extend-derivative} from the perspective of PDE. In view of \eqref{equivalent-deri-1}, it is natural to define the derivative $\partial_\mu {\cal V}$ in the following way:
\begin{align*}
 \partial_\mu {\cal V}(t,\mu,y):=\Phi(t,\mu,y),\quad(t,\mu,y)\in[0,T]\times\mathbb P_2(\mathbb R)\times\mathbb R.
\end{align*}
It is easy to see that $\partial_\mu {\cal V}(t,\mu^N,x_1)=NV_N(t,x_1,\ldots,x_N)$. When $\sigma=0$, in view of the projection property in Theorem \ref{extend-U}, it is straightforward that \eqref{intro:2} is true for $\mu=\mu^N$. Thanks to Corollary \ref{extend-derivative} and the above notion of $\partial_\mu {\cal V}$, we may now let $\mu^N\to\mu$ and find \eqref{intro:2} true for $\mu\in\mathcal P_2(\mathbb R)$.

As a result of the distributional difference estimate in Lemma \ref{Holder-cont.}, we prove the Lipschitz continuity of $\theta^*(t,\mu^N)$ for long time $T>0$, which has been reported in Theorem \ref{lip-lift-theta-1}.
\begin{proof}[{ Proof of Theorem \ref{lip-lift-theta-1}}]
  Let $\theta^*$, $\hat\theta^*$ denote the optimal feedback function corresponding to $\mu^N$ and $\nu^N$. According to the first order condition,
  \begin{align*}
    &\lambda\theta^*+\frac1N\sum_{i=1}^Nf_\theta(t,\theta^*,x_i,\mu^N)\partial_\mu {\cal V}(t,\mu^N,x_i)=0,\\
    &\lambda\hat\theta^*+\frac1N\sum_{i=1}^Nf_\theta(t,\hat\theta^*,y_i,\nu^N)\partial_\mu{\cal V}(t,\nu^N,y_i)=0.
  \end{align*}
  Subtracting the above and utilizing \eqref{hypo-regu-2}, \eqref{hypo-regu-3}, \eqref{VN-scaling-3} as well as \eqref{tilde-C-7}, we have
  \begin{align*}
    {(\lambda-\lambda_0)}|\theta^*-\hat\theta^*|&\leq\frac{\|f_{\theta x}\|_\infty C^Q}N\sum_{i=1}^N|x_i-y_i|+\frac{\|f_{\mu\theta}\|_\infty\tilde C_{71}}N\sum_{i=1}^N|x_i-y_i|\notag\\
    &\quad+\frac{\|f_\theta\|_\infty}N\sum_{i=1}^N|\partial_\mu{\cal V}(t,\mu^N,x_i)-\partial_\mu{\cal V}(t,\hat\mu^N,y_i)|.
  \end{align*}
  In view of Lemma \ref{Holder-cont.}, by choosing appropriate $(x_i,y_i)$, $i=1,\ldots,N$, we can deduce \eqref{theta-convergence-1} from the above.
\end{proof}
Parallel to Corollary \ref{theta-con-rate} and Proposition \ref{cauchy-sequence-0-0}, we estimate the convergence rate of feedback function and the optimal parameters for long time $T>0$, see Corollary \ref{theta-con-rate-1} and Proposition \ref{cauchy-sequence-1-0}.

{ \section{Example: training with monotone loss functions}\label{adaptation}

In this subsection we show that, for certain $L,\ U$, and $f$, how \noindent{\bf Hypothesis (R1)} can be verified with any $\lambda, T>0$. To ease technical details, for $N=1,2,\ldots$ we consider
\begin{align}\label{mono-0}
\left\{\begin{aligned}
    &dX^{\theta,i}_N(t) = f\big(t,\theta(t),X^{\theta,i}_N(t)\big)dt + \sigma dW^0(t),\ t\in[0,T],\\
&X^{\theta,i}_N(0)=x_i,\ i=1,\ldots,N,
\end{aligned}\right.
\end{align}
together with the objective functional $J_N$ in \eqref{obj-func-2}. Here all components $(X^{\theta,i}_N)_{1\leq i\leq N}$ are coupled via $\big(\theta(t)\big)_{t\in[0,T]}$. In this subsection we further assume
\begin{itemize}
 \item The drift function $f$ has bounded derivatives and satisfies
 \begin{align}\label{mono-1}
  &\partial^2_{\theta\theta}f(t,\theta,y)>0,\quad\partial^2_{yy}f(t,\theta,y)\partial^2_{\theta\theta}f(t,\theta,y)>|\partial^2_{\theta y}f(t,\theta,y)|^2,\quad(t,\theta,y)\in[0,T]\times\mathbb R\times\mathbb R.
 \end{align}
\item The loss functions $L,\ U$ are both convex, Lipschitz continuous and satisfy
\begin{align}\label{mono-2}
 L'(y)\geq0,\ U'(y)\geq0,\quad y\in\mathbb R.
\end{align}
\end{itemize}
In the current situation,
\begin{align*}
 H_N(t,x,p)=\inf_{\theta\in\Theta}\bigg\{\sum_{i=1}^Nf(t,\theta,x_i)p_i+\frac\lambda2|\theta|^2+\frac1N\sum_{i=1}^NL(x_i)\bigg\},\quad(t,x,p)\in[0,T]\times\mathbb R^N\times\mathbb R^N.
\end{align*}
We first show the following:
\begin{lemma}\label{example-1}
 Suppose \eqref{mono-1} and \eqref{mono-2}. Then for $p=(p_1,\ldots,p_N)\in\mathbb R^N$ satisfying $p_i\geq0$,
 \begin{align*}
  D^2_{xx}\tilde H_N(t,x,p)\geq0,\quad(t,x)\in[0,T]\times\mathbb R^N.
 \end{align*}
\end{lemma}
\begin{proof}
 For $\xi=(\xi_1,\ldots,\xi_N)\in\mathbb R^N$, direct calculation yields
 \begin{align*}
  &\xi^\top D^2_{xx}\tilde H_N(t,x,p)\xi\\
  &=-\bigg(\sum_{i=1}^N\partial^2_{\theta\theta}f(t,x_i)p_i+\lambda\bigg)^{-1}\bigg(\sum_{i,j=1}^N\xi_i\partial^2_{\theta x}f(t,x_i)p_i\bigg)^2+\sum_{i=1}^N|\xi_i|^2\partial^2_{xx}f(t,x_i)p_i+\sum_{i=1}^N|\xi_i|^2L''(x_i).
 \end{align*}
According to H\"older's inequality and \eqref{mono-1}, we obtain the desired result with the following:
\begin{align*}
 \bigg(\sum_{i,j=1}^N\xi_i\partial^2_{\theta x}f(t,x_i)p_i\bigg)^2&\leq\bigg(\sum_{i=1}^N\partial^2_{\theta\theta}f(t,x_i)p_i\bigg)\bigg(\sum_{i=1}^N|\xi_i|^2\frac{|\partial^2_{\theta x}f(t,x_i)|^2}{\partial^2_{\theta\theta}f(t,x_i)}p_i\bigg)\\
 &\leq\bigg(\sum_{i=1}^N\partial^2_{\theta\theta}f(t,x_i)p_i+\lambda\bigg)\bigg(\sum_{i=1}^N|\xi_i|^2\partial^2_{xx}f(t,x_i)p_i\bigg),
\end{align*}
where according to our assumptions $\sum_{i=1}^N\partial^2_{\theta\theta}f(t,x_i)p_i+\lambda>0$.
\end{proof}
We also need the following monotonicity:
\begin{lemma}\label{example-2}
 Let $V^\varepsilon_N$ be the solution to \eqref{glb-3} with parameters from \eqref{mono-0}$\sim$\eqref{mono-2}, then for $(t,x)\in[0,T]\times\mathbb R^N,\ 1\leq i\leq N$, it holds that $\partial_{x_i}V^\varepsilon_N(t,x)\geq0.$
\end{lemma}
\begin{proof}
 In view of Proposition \ref{1st-deri-2-1}, $V^\varepsilon_N$ is the classical solution to HJB equation \eqref{glb-3}. We may consider the corresponding stochastic optimal control problem and use the stochastic maximum principle to get the optimal path and the adjoint process. Then for $(t,x)\in[0,T]\times\mathbb R^N$ we obtain the following:
 \begin{align*}
 \left\{\begin{aligned}
 dX^{i,*}_N(s)&=f\big(s,X^{i,*}_N(s),\mu^*_N(s)\big)ds+\sigma dW(s),\\
 dY^{*,i}_N(s)&=-\bigg[\partial_xf\big(s,\theta(s),X^{\theta,i}_N(s)\big)Y^{*,i}_N(s)+\frac1NL'\big(X^{i,*}_N(s)\big)\bigg]ds+\sum_{j=0}^NZ^{ij}_N(s)dW^j(s),\ s\in[t,T],\\
X^*_N(t)&=x,\quad Y^{*,i}_N(T)=\frac1NU'\big(X^{*,i}_N(T)\big),\quad1\leq i\leq N.
 \end{aligned}\right.
 \end{align*}
 Define $\tilde Y^{*,i}_N(s)=\exp\bigg\{\int_t^s\partial_xf\big(s,\theta(s),X^{\theta,i}_N(s)\big)ds\bigg\}Y^{*,i}_N(s)$. Note that $L',U'\geq0$, then
 \begin{align*}
  &\quad\partial_{x_i}V^\varepsilon_N(t,x)=\tilde Y^{*,i}_N(t)\\
  &=\mathbb E\bigg[\frac1NU'\big(X^{*,i}_N(T)\big)+\int_t^T\frac1N\exp\bigg\{\int_t^s\partial_xf\big(s,\theta(s),X^{\theta,i}_N(s)\big)ds\bigg\}L'\big(X^{i,*}_N(s)\big)ds\bigg]\geq0.
 \end{align*}
\end{proof}
With the above preparation, we can now show the quantitative convergence results on the value functions and training outcomes of the current neural SDE where the coefficient of the regularizer $\lambda>0$ can be arbitrarily small.
\begin{theorem}
 The the conditions in Proposition \ref{dY/dx-1} are fulfilled with arbitrary $\lambda,\ T>0$. As a consequence, the convergence results in Theorem \ref{extend-U}, Theorem \ref{lip-lift-theta-1} and Proposition \ref{cauchy-sequence-1-0} are valid.
\end{theorem}
\begin{proof}
 Fix arbitrary $\lambda>0$. In view of Lemma \ref{1st-deri} and Lemma \ref{example-2}, we can see that $\big(x,\nabla_xV^\varepsilon_N(t,x)\big)\in\mathcal A_{N,0,\tilde C_2}$ for all $\varepsilon>0,\ (t,x)\in[0,T]\times\mathbb R^N$. Moreover, since $L,\ U$ are Lipischitz, $C^L_{11}+C^U_{11}=0$. Note that $\partial^2_{\theta\theta}f>0$ and is bounded, hence for any constant $\lambda_0\in(0,\lambda)$, \eqref{hypo-regu-2} is true for $(t,\theta,x,p)\in[0,T]\times\Theta\times\mathcal{A}_{N,\delta,\tilde C_2}$ with sufficientlyl small $\delta>0$. Since $f$ has bounded derivatives, \eqref{hypo-regu-3} holds trivially with $(\Gamma_1,\Gamma_2)=(\delta,\tilde C_2)$. Hence \noindent{\bf Hypothesis (R)} holds with $(\Gamma_1,\Gamma_2)=(\delta,\tilde C_2)$. Combining Lemma \ref{example-1} and Lemma \ref{example-2}, we obtain $\nabla^2_{xx}\tilde H_N\big(t,x,\nabla_xV^\varepsilon_N(t,x)\big)\geq0$ for $N\geq1,\ \varepsilon>0$, $(t,x)\in[0,T]\times\mathbb R^N$. Therefore \noindent{\bf Hypothesis (R1)} holds with $(\Gamma_1,\Gamma_2)=(\delta,\tilde C_2)$ and the proof is complete.
\end{proof}}

\appendix
\section{Some technical results}\label{sec-App}
Recall \eqref{definition-MNC}. For a matrix valued function
$\tilde A$, where $\tilde A(t)\in\mathbb R^{N\times N}$, $t\in[0,T]$, we define the matrix $|\tilde A|\in\mathbb R^{N\times N}$ in such a way that
\begin{align}
|\tilde A|_{ij}=\max_{t\in[0,T]}|\tilde A_{ij}(t)|,\quad1\leq i,j\leq N.
\end{align}
Further, define the norm $\|\tilde A\|_N$ by
\begin{align}
\|\tilde A\|_N=\max_{1\leq i,j\leq N}N^{\delta_{ij}-1}|\tilde A|_{ij}.
\end{align}

\begin{lemma}\label{multiply-close}
For $N\geq1$, let $A_N\in M_N(C_1)$ and $B_N\in M_N(C_2)$, then $A_NB_N\in M_N(3C_1C_2)$.
\end{lemma}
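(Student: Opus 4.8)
The plan is to compute the entries of the product matrix directly from the definition of $M_N(\cdot)$ in \eqref{def-MN} and track the constant carefully. Writing $D_N := A_N B_N$, the $(i,j)$-entry is $(D_N)_{ij} = \sum_{k=1}^N (A_N)_{ik}(B_N)_{kj}$, so by the triangle inequality and the assumed bounds $|(A_N)_{ik}|\leq C_1(\delta_{ik}+N^{-1})$, $|(B_N)_{kj}|\leq C_2(\delta_{kj}+N^{-1})$, one gets
\begin{align*}
|(D_N)_{ij}| \leq C_1C_2\sum_{k=1}^N(\delta_{ik}+N^{-1})(\delta_{kj}+N^{-1}).
\end{align*}

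The next step is to expand the sum into its four pieces: $\sum_{k}\delta_{ik}\delta_{kj}=\delta_{ij}$, $\sum_k \delta_{ik}N^{-1}=N^{-1}$, $\sum_k N^{-1}\delta_{kj}=N^{-1}$, and $\sum_k N^{-2}=N\cdot N^{-2}=N^{-1}$. Hence $|(D_N)_{ij}|\leq C_1C_2(\delta_{ij}+3N^{-1})$.

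Finally, I would absorb the factor $3$ by noting that $\delta_{ij}+3N^{-1}\leq 3\delta_{ij}+3N^{-1}=3(\delta_{ij}+N^{-1})$, which yields $|(D_N)_{ij}|\leq 3C_1C_2(\delta_{ij}+N^{-1})$ for all $1\leq i,j\leq N$, i.e. $A_NB_N\in M_N(3C_1C_2)$ as claimed. There is no real obstacle here; the only point requiring a little care is the bookkeeping of the off-diagonal contribution (the three $N^{-1}$ terms), and the observation that on the diagonal the crude bound $\delta_{ij}\leq 3\delta_{ij}$ is what makes the single constant $3C_1C_2$ work uniformly in $N$.
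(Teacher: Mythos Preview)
Your proof is correct and follows essentially the same approach as the paper: a direct entrywise computation using the definition of $M_N(C)$. The paper splits into the cases $i=j$ and $i\neq j$ and bounds each separately, whereas your unified expansion $\sum_k(\delta_{ik}+N^{-1})(\delta_{kj}+N^{-1})=\delta_{ij}+3N^{-1}\leq 3(\delta_{ij}+N^{-1})$ is a slightly cleaner packaging of the same calculation.
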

\begin{proof}
If $i\neq j$, then
\begin{align*}
\sum_{k=1}^N\left|a^N_{ik}b^N_{kj}\right|\leq
C_1\cdot\frac{C_2}N+\frac{C_1}N\cdot C_2+\frac{C_1}N\cdot\frac{C_2}N\cdot(N-2)<\frac{3C_1C_2}N.
\end{align*}
If $i=j$, then
\begin{align*}
\sum_{k=1}^N\left|a^N_{ik}b^N_{kj}\right|\leq
C_1C_2+\frac{C_1}N\cdot\frac{C_2}N\cdot(N-1)<3C_1C_2.
\end{align*}
Hence we have $A_NB_N\in M_N(3C_1C_2)$.
\end{proof}

\begin{lemma}\label{forward-scaling-eqn}
For each $N\geq1$, let $(W(t))_{t\in(0,T)}$ be a real valued standard Brownian motion. Let the $\mathbb R^{N\times N}$-valued processes  $ (X(t))_{t\in(0,T)},\ (A(t))_{t\in(0,T)},\ (B(t))_{t\in(0,T)}$, satisfy
\begin{align}\label{scaling-eqn}
X(t)=X(0)+\int_0^tX(s)A(s)ds+\int_0^tX(s)B(s)dW(s),\quad t\in[0,T].
\end{align}
{Suppose that $X(0)$ satisfies} for $k=1,2,\ldots$
\begin{align*}
\mathbb E\left[|X_{ij}(0)|^{2k}\right]\leq\left\{\begin{aligned}
C_{0,k},\quad i=j,\\
\frac {C_{0,k}}{N^{2k}},\quad i\neq j,
\end{aligned}\right.
\end{align*}
and $|A|,|B|\in M_N(C)$ for some {constant} $C$. Then
\begin{align*}
\mathbb E\left[\max_{0\leq s\leq T}|X_{ij}(s)|^{2k}\right]\leq\left\{\begin{aligned}
\tilde C_k,\quad i=j,\\
\frac {\tilde C_k}{N^{2k}},\quad i\neq j,
\end{aligned}\right.
\end{align*}
where $\tilde C_k=\tilde C_k(C_{0,k},C,T)$ is increasing in $T$ (but independent of $N$).
\end{lemma}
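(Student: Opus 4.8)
The plan is to analyze the linear matrix SDE \eqref{scaling-eqn} entrywise, keeping careful track of the two different scalings: diagonal entries should stay $O(1)$ while off-diagonal entries should stay $O(N^{-1})$. The natural device is the norm $\|\cdot\|_N$ introduced just above the statement, which is built precisely so that membership in $M_N(C)$ corresponds to a bounded $\|\cdot\|_N$-norm, and which is submultiplicative up to a harmless constant factor by Lemma \ref{multiply-close} (indeed, if $\|A\|_N\le C_1$ and $\|B\|_N\le C_2$ then $\|AB\|_N\le 3C_1C_2$). So first I would rewrite the $(i,j)$-component of \eqref{scaling-eqn} as
\begin{align*}
X_{ij}(t)=X_{ij}(0)+\sum_{k=1}^N\int_0^tX_{ik}(s)A_{kj}(s)\,ds+\sum_{k=1}^N\int_0^tX_{ik}(s)B_{kj}(s)\,dW(s),
\end{align*}
raise to the $2k$-th power, take the running supremum, and apply the Burkholder--Davis--Gundy inequality to the stochastic integral term. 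This reduces everything to estimating sums of the form $\sum_{k}\int_0^t |X_{ik}(s)|\,|A_{kj}(s)|\,ds$ (and the analogous quadratic-variation sum for the martingale part), where one splits the sum over $k$ into the terms $k=i$, $k=j$, and $k\notin\{i,j\}$, exactly as in the proof of Lemma \ref{multiply-close}, using $|A|,|B|\in M_N(C)$.

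The key bookkeeping step is to define $\Psi_k(t):=\max_{1\le i,j\le N} N^{2k(\delta_{ij}-1)}\,\mathbb E\big[\max_{0\le s\le t}|X_{ij}(s)|^{2k}\big]$ — i.e. the $2k$-th moment of the running maximum, rescaled so that both the diagonal and off-diagonal target bounds read simply as $\Psi_k(t)\le \tilde C_k$. I would then show, for each fixed $(i,j)$, that the rescaled quantity $N^{2k(\delta_{ij}-1)}\mathbb E[\max_{0\le s\le t}|X_{ij}(s)|^{2k}]$ is bounded by $C_{0,k}$ (the initial datum contribution) plus a constant (depending only on $C$, $T$, $k$, and BDG constants) times $\int_0^t \Psi_k(s)\,ds$. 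The crucial point making the constant $N$-independent is that when one sums $\sum_k |X_{ik}|\,|A_{kj}|$, the number of "large" ($O(1)$ relative to the relevant scale) summands is at most two regardless of $N$, while the remaining $N-2$ summands each carry an extra $N^{-1}$ from $A$ (and possibly another from the off-diagonal $X_{ik}$), so the sum over those contributes a geometric-type factor that is bounded uniformly in $N$; Hölder's inequality in the $k$ summation index then converts products of entries into a single rescaled moment. Once the integral inequality $\Psi_k(t)\le C_{0,k}+C'\int_0^t\Psi_k(s)\,ds$ is in hand, Grönwall's lemma yields $\Psi_k(t)\le C_{0,k}e^{C'T}=:\tilde C_k$, which is exactly the claimed bound, and the monotonicity in $T$ is manifest from this expression.

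The main obstacle I anticipate is the careful combinatorial/Hölder argument showing that the constant $C'$ in the integral inequality does not secretly depend on $N$: one must verify that every cross term arising from expanding $\big(\sum_k\int X_{ik}A_{kj}\big)^{2k}$ — after splitting indices into diagonal-type and off-diagonal-type — comes with exactly the right power of $N$ to be absorbed into the rescaled moment $\Psi_k$, with no leftover positive power of $N$. A secondary technical point is handling the stochastic-integral term: BDG gives control of $\mathbb E[\max_s |{\int_0^t}\cdots|^{2k}]$ in terms of $\mathbb E\big[\big(\int_0^t(\sum_k X_{ik}B_{kj})^2 ds\big)^{k}\big]$, and one needs Hölder in time to pull the $2k$-th moment inside the time integral before applying the entrywise splitting; this is routine but must be done so that no $N$-dependence creeps in through the BDG constant (which is dimension-free) or the time-Hölder constant (which depends only on $T$ and $k$).
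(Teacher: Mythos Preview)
Your proposal is correct and takes a genuinely different route from the paper. Both arguments share the same core entrywise computation: writing out $X_{ij}(t)$, applying BDG, and splitting $\sum_k X_{ik}A_{kj}$ according to whether $k$ hits a diagonal index or not, then using Jensen/H\"older on the off-diagonal sum to get an $N$-independent constant. Where they diverge is the outer framework: the paper sets up a Picard map $\Phi:X\mapsto \tilde X$ on a short interval $[0,\delta]$, shows $\Phi$ is a contraction and leaves the set $\mathcal X=\{X:\mathbb E\max_s|X_{ij}|^{2k}\le M_k(N^{-2k}+\delta_{ij})\}$ invariant, and then iterates over subintervals $[0,\delta],[\delta,2\delta],\ldots$ (the point being that $\delta$ depends only on $C,k$ and not on the initial bound, so the iteration terminates in $\lceil T/\delta\rceil$ steps). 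You instead package the same moment inequalities into a single Gr\"onwall estimate for the rescaled functional $\Psi_k(t)$. Your route is shorter and gives the explicit form $\tilde C_k=C_{0,k}e^{C'T}$; the paper's route yields existence/uniqueness of the solution as a byproduct of the contraction, which is strictly more than the lemma asks for. One small point you should make explicit: to apply Gr\"onwall you need $\Psi_k(t)<\infty$ a priori, which follows from standard linear-SDE moment bounds (possibly $N$-dependent at that stage) since $A,B$ are bounded.
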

\begin{proof}
 We show that \eqref{scaling-eqn} admits a unique solution $X$, with the required estimates, which is also the fixed point of the mapping $\Phi:\ X\mapsto\tilde X$ defined as follows:
\begin{align}\label{apen-fixed-pn}
\tilde X(t)=X_0+\int_0^tX(s)A(s)ds+\int_0^tX(s)B(s)dW(s),\quad t\in[0,\delta],
\end{align}
where $\delta>0$ is a small enough positive number. Consider to inputs $X^{(1)}$ and $X^{(2)}$, for $1\leq p,q\leq N$,
\begin{align*}
&\quad\Phi_{pq}\big(X^{(1)}\big)(t)-\Phi_{pq}\big(X^{(2)}\big)(t)\notag\\
&=\int_0^t\sum_{k=1}^N\left[X^{(1)}_{pk}(s)-X^{(2)}_{pk}(s)\right]A_{kq}(s)ds+\int_0^t\sum_{k=1}^N\left[X^{(1)}_{pk}(s)-X^{(2)}_{pk}(s)\right]B_{kq}(s)dW(s),
\end{align*}
then according to Burkholder--Davis--Gundy inequality,
\begin{align*}
&\quad\mathbb E\left[\max_{t\in[0,\delta]}|\Phi_{pq}\big(X^{(1)}\big)(t)-\Phi_{pq}\big(X^{(2)}\big)(t)|^{2k}\right]\notag\\
    &\leq C_k\mathbb E\left[\int_0^\delta\bigg|\sum_{\substack{i=1\\i\neq q}}^N\bigg[X^{(1)}_{pi}(s)-X^{(2)}_{pi}(s)\bigg]A_{iq}(s)\bigg|^{2k}ds\right]\notag\\
&\quad+C_k\mathbb E\left[\int_0^\delta\bigg|\bigg[X^{(1)}_{pq}(s)-X^{(2)}_{pq}(s)\bigg]A_{qq}(s)\bigg|^{2k}ds\right]\notag\\
    &\quad+C_k\mathbb E\left[\int_0^\delta\bigg|\sum_{\substack{i=1\\i\neq q}}^N\bigg[X^{(1)}_{pi}(s)-X^{(2)}_{pi}(s)\bigg]B_{iq}(s)\bigg|^{2k}ds\right]\notag\\
&\quad+C_k\mathbb E\left[\int_0^\delta\bigg|\bigg[X^{(1)}_{pq}(s)-X^{(2)}_{pq}(s)\bigg]B_{qq}(s)\bigg|^{2k}ds\right].
\end{align*}
According to Jensen's inequality,
\begin{align}\label{holder}
    \bigg|\sum_{\substack{i=1\\i\neq q}}^N\bigg[X^{(1)}_{pi}(s)-X^{(2)}_{pi}(s)\bigg]A_{iq}(s)\bigg|^{2k}&\leq\frac{C^{2k}}{N^{2k}}\cdot(N-1)^{2k}\cdot\bigg|\frac1{N-1}\sum_{\substack{i=1\\i\neq q}}^N|X^{(1)}_{pi}(s)-X^{(2)}_{pi}(s)|\bigg|^{2k}\notag\\
&\leq C^{2k}\cdot\frac1{N-1}\sum_{\substack{i=1\\i\neq q}}^N\big|X^{(1)}_{pi}(s)-X^{(2)}_{pi}(s)\big|^{2k}.
\end{align}
Here we have used
\begin{align*}
\big|\big[X^{(1)}_{pi}(s)-X^{(2)}_{pi}(s)\big]A_{iq}(s)\big|\leq\left\{\begin{aligned}
C\big|X^{(1)}_{pi}(s)-X^{(2)}_{pi}(s)\big|,\quad i=q,\\
\frac CN\big|X^{(1)}_{pi}(s)-X^{(2)}_{pi}(s)\big|,\quad i\neq q.
\end{aligned}\right.
\end{align*}
Therefore
\begin{align}\label{holder-max}
    &\quad\mathbb E\bigg[\int_0^\delta\bigg|\sum_{\substack{i=1\\i\neq q}}^N\bigg[X^{(1)}_{pi}(s)-X^{(2)}_{pi}(s)\bigg]A_{iq}(s)\bigg|^{2k}ds\bigg]\notag\\
    &\leq\frac{C^{2k}}{N-1}\sum_{\substack{i=1\\i\neq q}}^N\mathbb E\left[\int_0^\delta\big|X^{(1)}_{pi}(s)-X^{(2)}_{pi}(s)\big|^{2k}ds\right]\notag\\
    &\leq\frac{C^{2k}\delta}{N-1}\sum_{\substack{i=1\\i\neq q}}^N\mathbb E\left[\max_{0\leq s\leq\delta}\big|X^{(1)}_{pi}(s)-X^{(2)}_{pi}(s)\big|^{2k}\right]\notag\\
&\leq C^{2k}\delta\max_{1\leq i,j\leq N}\mathbb E\left[\max_{0\leq t\leq\delta}|X^{(1)}_{ij}(t)-X^{(2)}_{ij}(t)|^{2k}\right].
\end{align}
Similar estimates to \eqref{holder} yields
\begin{align*}
    \mathbb E\left[\max_{t\in[0,\delta]}|\Phi_{pq}\big(X^{(1)}\big)(t)-\Phi_{pq}\big(X^{(2)}\big)(t)|^{2k}\right]\leq4\delta C_kC^{2k}\max_{1\leq i,j\leq N}\mathbb E\left[\max_{0\leq t\leq\delta}|X^{(1)}(t)-X^{(2)}(t)|^{2k}\right],
\end{align*}
and thus
\begin{align*}
    &\quad\max_{1\leq i,j\leq N}\mathbb E\left[\max_{t\in[0,\delta]}|\Phi_{ij}\big(X^{(1)}\big)(t)-\Phi_{ij}\big(X^{(2)}\big)(t)|^{2k}\right]\notag\\
    &\leq4\delta C_kC^{2k}\max_{1\leq i,j\leq N}\mathbb E\left[\max_{0\leq t\leq\delta}|X^{(1)}_{ij}(t)-X^{(2)}_{ij}(t)|^{2k}\right],
\end{align*}
Consider
\begin{align}\label{delta-condition}
\delta<\frac1{8C_kC^{2k}}.
\end{align}
  For the sake of later iterations, we note here that the choice of $\delta$ in \eqref{delta-condition} is independent of the bound $C_0$ of initial data.

In view of \eqref{delta-condition}, $\Phi$ is a contraction mapping. Next, we claim that $\Phi$ maps the following set
\begin{align}\label{inva-space}
    \mathcal X:=\left\{X:\ X{\rm\ is\ matrix\ valued\ process\ and\ }\mathbb E\left[\max_{0\leq s\leq \delta}|X_{ij}(s)|^{2k}\right]\leq M_k(N^{-2k}+\delta_{ij})\right\}.
\end{align}
into itself for some $M_k$.

To see the claim, consider $1\leq p,q\leq N$ and $p\neq q$,
\begin{align*}
&\quad\mathbb E\left[\max_{t\in[0,\delta]}|\Phi_{pq}\big(X\big)(t)|^{2k}\right]\notag\\
    &\leq C_k\mathbb E\big[\big|X_{pq}(0)\big|^{2k}\big]+C_k\mathbb E\bigg[\int_0^\delta\bigg|\sum_{\substack{i=1\\i\neq p,q}}^NX_{pi}(s)A_{iq}(s)\bigg|^{2k}ds\bigg]+C_k\mathbb E\bigg[\int_0^\delta\big|X_{pp}(s)A_{pq}(s)\big|^{2k}ds\bigg]\notag\\
    &\quad+C_k\mathbb E\bigg[\int_0^\delta\big|X_{pq}(s)A_{qq}(s)\big|^{2k}ds\bigg]+C_k\mathbb E\bigg[\int_0^\delta\bigg|\sum_{\substack{i=1\\i\neq p,q}}^NX_{pi}(s)B_{iq}(s)\bigg|^{2k}ds\bigg]\notag\\
    &\quad+C_k\mathbb E\bigg[\int_0^\delta\big|X_{pp}(s)B_{pq}(s)\big|^{2k}ds\bigg]+C_k\mathbb E\bigg[\int_0^\delta\big|X_{pq}(s)B_{qq}(s)\big|^{2k}ds\bigg].
\end{align*}
In view of \eqref{holder} and \eqref{inva-space},
\begin{align*}
    &\quad\mathbb E\bigg[\int_0^\delta\bigg|\sum_{\substack{i=1\\i\neq p,q}}^NX_{pi}(s)A_{iq}(s)\bigg|^{2k}ds\bigg]\leq\frac{C^{2k}}{N-2}\mathbb E\bigg[\sum_{\substack{i=1\\i\neq p,q}}^N\int_0^\delta\big|X_{pi}(s)\big|^{2k}ds\bigg]\notag\\
    &\leq\frac{\delta C^{2k}}{N-2}\sum_{\substack{i=1\\i\neq p,q}}^N\mathbb E\bigg[\max_{0\leq s\leq\delta}\big|X_{pi}(s)\big|^{2k}\bigg]\leq\frac{\delta C^{2k}M_k}{N^{2k}}.
\end{align*}
Combining the two inequalities above together, we arrive at
\begin{align*}
    \mathbb E\left[\max_{t\in[0,\delta]}|\Phi_{pq}\big(X\big)(t)|^{2k}\right]\leq\frac{C_kC^{2k}_0}{N^{2k}}+\frac{6\delta C_kM_kC^{2k}}{N^{2k}}.
\end{align*}
Similarly,
\begin{align*}
&\quad\mathbb E\left[\max_{t\in[0,\delta]}|\Phi_{pp}\big(X\big)(t)|^{2k}\right]\notag\\
    &\leq C_k\mathbb E\big[\big|X_{pp}(0)\big|^{2k}\big]+C_k\mathbb E\bigg[\int_0^\delta\bigg|\sum_{\substack{i=1\\i\neq p}}^NX_{pi}(s)A_{ip}(s)\bigg|^{2k}ds\bigg]+C_k\mathbb E\bigg[\int_0^\delta\big|X_{pp}(s)A_{pp}(s)\big|^{2k}ds\bigg]\notag\\
    &\quad+C_k\mathbb E\bigg[\int_0^\delta\bigg|\sum_{\substack{i=1\\i\neq p}}^NX_{pi}(s)B_{ip}(s)\bigg|^{2k}ds\bigg]+C_k\mathbb E\bigg[\int_0^\delta\big|X_{pp}(s)B_{pp}(s)\big|^{2k}ds\bigg]\notag\\
&\leq C_kC^{2k}_0+\frac{2\delta C_kM_kC^{2k}}{N^{2k}}+2\delta C_kM_kC^{2k}.
\end{align*}
Let $\delta$ and $M_k$ satisfy
\begin{align*}
\delta<\frac1{12C_kC^{2k}},\quad M_k>3C^{2k}_0.
\end{align*}
Note again that the choice of $\delta$ is still independent of $C_0$. Then estimate above implies that
\begin{align*}
\mathbb E\left[\max_{0\leq s\leq \delta}|\Phi(X)_{ij}(s)|^{2k}\right]\leq M_k(N^{-2k}+\delta_{ij}).
\end{align*}
  In other words, contraction mapping $\Phi$ maps $\mathcal X$ into itself. Hence the only fixed point of $\Phi$ lies in $\mathcal X$.

  To conclude the lemma, notice that the choice of $\delta$ is independent of $C_0$, therefore we can separate $[0,T]$ into $[0,\delta]$, $[\delta,2\delta]$, $[2\delta,3\delta]$, $\ldots$, then go over the procedure above repeatedly and obtain the desired results.
\end{proof}

\begin{proof}[Proof of Lemma \ref{theta-Lip}:]
 Note that for each $(t,x,p)\in[0,T]\times \mathcal A_N$, $\theta^*:=\theta^{R_1}_N(t,x,p)$ minimizes the strictly convex function $H^{R_1}_N\left(t,x,p,\theta\right)$ with respect to  $\theta\in\Theta$, hence
\begin{align*}
\langle\partial_\theta H^{R_1}_N\left(t,x,p,\theta^*\right),\theta-\theta^*\rangle\geq0,\ \theta\in\Theta.
\end{align*}
Similarly, for another pair of $(\hat x,\hat p)\in \mathcal A_N$ and $\hat\theta^*:=\theta^{R_1}_N(t,\hat x,\hat p)$,
\begin{align*}
\langle\partial_\theta H^{R_1}_N(t,\hat x,\hat p,\hat\theta^*),\theta-\hat\theta^*\rangle\geq0,\ \theta\in\Theta.
\end{align*}
Therefore we have by taking $\theta=\hat\theta^*,\ \theta^*$ that
\begin{align}\label{foc-0}
    0\geq\big\langle\partial_\theta H^{R_1}_N(t,x,p,\theta^*)-\partial_\theta H^{R_1}_N(t,\hat x,\hat p,\hat\theta^*),\theta^*-\hat\theta^*\big\rangle.
\end{align}
on the other hand,
\begin{align}\label{foc-1}
{ \partial_\theta H_N(t,x,p,\theta^*)-\partial_\theta H_N(t,\hat x,\hat p,\hat\theta^*)=I+II,}
\end{align}
where
\begin{align}
    \label{differ-I}&I:=\sum_{i=1}^Nf_\theta\bigg(t,\theta^*,x_i,\frac1N\sum_{j=1}^N\delta_{x_j}\bigg)p_i-\sum_{i=1}^Nf_\theta\bigg(t,\theta^*,\hat x_i,\frac1N\sum_{j=1}^N\delta_{\hat x_j}\bigg)\hat p_i,\\
    &{ II:=\lambda(\theta^*-\hat\theta^*)+\sum_{i=1}^N\bigg[f_\theta\bigg(t,\theta^*,\hat x_i,\frac1N\sum_{j=1}^N\delta_{\hat x_j}\bigg)-f_\theta\bigg(t,\hat\theta^*,\hat x_i,\frac1N\sum_{j=1}^N\delta_{\hat x_j}\bigg)\bigg]p_i.}\notag
\end{align}
{ For $i=1,2,\ldots,N,$
\begin{align*}
 &\quad\sum_{i=1}^N\bigg[f_\theta\bigg(t,\theta^*,\hat x_i,\frac1N\sum_{j=1}^N\delta_{\hat x_j}\bigg)-f_\theta\bigg(t,\hat\theta^*,\hat x_i,\frac1N\sum_{j=1}^N\delta_{\hat x_j}\bigg)\bigg]p_i\\
 &=\sum_{i=1}^Np_i\int_0^1\partial^2_{\theta\theta}f\bigg(t,\hat\theta^*+t(\theta-\theta^*),\hat x_i,\frac1N\sum_{j=1}^N\delta_{\hat x_j}\bigg)(\theta-\theta^*)dt
\end{align*}}
According to \eqref{hypo-regu-2} in the assumption, it holds for some constant $\lambda_0>0$ that
{ \begin{align}\label{foc-2}
(\theta-\theta^*)^\top\bigg[\sum_{i=1}^Np_i\partial^2_{\theta\theta}f\bigg(t,\hat\theta^*+t(\theta-\theta^*),\hat x_i,\frac1N\sum_{j=1}^N\delta_{\hat x_j}\bigg)\bigg](\theta-\theta^*)\geq-\lambda_0|\theta-\theta^*|^2,\quad t\in[0,1].
\end{align}}
Plugging \eqref{foc-1}, \eqref{foc-2} into \eqref{foc-0}, and using the Cauchy-Schwartz inequality, we have that
\begin{align}\label{differ-theta}
|\theta^*-\hat\theta^*|\leq(\lambda-\lambda_0)^{-1}|I|.
\end{align}
  According to \eqref{differ-I}, $I$ is the difference of the following function (w.r.t. $(x,p)\in\mathbb R^N\times\mathbb R^N$)
\begin{align*}
\sum_{i=1}^Nf_\theta\bigg(t,\hat\theta^*,x_i,\frac1N\sum_{j=1}^N\delta_{x_j}\bigg)p_i,
\end{align*}
which implies the local Lipschitz continuity of $\theta^{R_1}_N(t,x,p)$ with respect to $(x,p)\in \mathcal A_N$.

In view of the local Lipschitz continuity, $\theta^{R_1}_N(t,x,p)$ is differentiable almost everywhere. Furthermore, it follows from \eqref{differ-theta} that
\begin{align*}
&\quad\big|\partial_{x_k}\theta^{R_1}_N(t,x,p)\big|\notag\\
    &\leq(\lambda-\lambda_0)^{-1}\bigg|f_{\theta x}\bigg(t,\hat\theta^*,x_k,\frac1N\sum_{j=1}^N\delta_{x_j}\bigg)p_k+\frac1N\sum_{i=1}^N\partial_\mu f_\theta\bigg(t,\hat\theta^*,x_i,\frac1N\sum_{j=1}^N\delta_{x_j}\bigg)(x_k)p_i\bigg|.
\end{align*}
In view of \eqref{hypo-regu-3},
\begin{align*}
\big|\partial_{x_k}\theta^{R_1}_N(t,x,p)\big|\leq\frac{2(\lambda-\lambda_0)^{-1}C^Q}N.
\end{align*}
Similarly we also have
\begin{align*}
\big|\partial_{p_k}\theta^{R_1}_N(t,x,p)\big|\leq(\lambda-\lambda_0)^{-1}\|f_\theta\|_\infty.
\end{align*}
\end{proof}

{\bf Acknowledgements.} HL acknowledges the support provided by NSFC Grant 12531009, Hong Kong RGC Grant GRF 11311422 and Singapore MOE AcRF Grant R-146-000-271-112. ARM acknowledges the support provided by the EPSRC via the NIA with grant number EP/X020320/1 and by the King Abdullah University of Science and Technology Research Funding (KRF) under Award No. ORA-2021-CRG10-4674.2. CM acknowledges the support provided by NSFC Grant 12522122,  NSFC/RGC JRS N\_CityU165/25, Hong Kong RGC Grant GRF 11311422 and Hong Kong RGC Grant GRF 11303223. CZ acknowledges the support provided by Singapore MOE (Ministry of Education) AcRF Grant A-8000453-00-00, IoTex Foundation Industry Grant A-8001180-00-00 and NSFC Grant No. 11871364. We would like to thank Matthew Thorpe for carefully reading an earlier version of the manuscript and for his feedback on it. { We thank the anonymous referees for their constructive remarks and questions which led us to improve the presentation of our results.}

\medskip

{\bf Declarations.}

\medskip

{\bf Conflict of interest.} The authors have no financial or non-financial conflicts of interest to disclose.

\bibliographystyle{plain}

\end{document}